\pdfoutput=1
\documentclass[11pt]{article}

\usepackage[a4paper,margin=28mm]{geometry}
\usepackage[T1]{fontenc}
\usepackage[utf8]{inputenc}
\usepackage{lmodern}
\usepackage{amsmath,amssymb,amsthm,mathtools}
\usepackage{bm}
\usepackage{mathrsfs}
\usepackage{graphicx}
\usepackage{booktabs}
\usepackage{siunitx}
\usepackage{array}
\usepackage{float}
\usepackage{placeins}
\usepackage{algorithm}
\usepackage{algorithmicx}
\usepackage{algpseudocode}
\usepackage{enumitem}
\usepackage[authoryear,longnamesfirst]{natbib}
\usepackage[colorlinks=true,linkcolor=blue,citecolor=blue,urlcolor=blue]{hyperref}

\newcommand{\doi}[1]{\href{https://doi.org/#1}{doi:#1}}

\theoremstyle{plain}
\newtheorem{theorem}{Theorem}[section]
\newtheorem{lemma}[theorem]{Lemma}
\newtheorem{proposition}[theorem]{Proposition}

\theoremstyle{remark}
\newtheorem{remark}[theorem]{Remark}
\newtheorem{example}[theorem]{Example}
\newtheorem{assumption}[theorem]{Assumption}
\newtheorem{definition}[theorem]{Definition}

\newcounter{sone}
\newcounter{stwo}
\newcounter{sthree}
\newcounter{sfour}
\newcounter{sfive}
\newcounter{ssix}

\title{Low-order CR--RT equilibrated-flux certification for semilinear problems on anisotropic meshes}

\author{Hiroki Ishizaka\\
Team FEM, Matsuyama, Japan\\
E-mail: \texttt{h.ishizaka005@gmail.com}\\
\url{https://teamfem.github.io/hiroki_ishizaka/}\\
ORCID: 0000-0002-5892-7488}

\date{}

\begin{document}

\maketitle

\begin{abstract}
We develop a low-order Crouzeix--Raviart--Raviart--Thomas (CR--RT) equilibrated-flux certification workflow for finite element approximations of semilinear diffusion--reaction problems, with particular emphasis on anisotropic mesh settings. Given a computed conforming finite element state $\tilde u_h$, the certification process is reduced to three computable quantities required by a Newton--Kantorovich argument: a dual-norm residual bound, a stability constant for the Fr\'echet derivative, and a Lipschitz bound for the derivative in a neighborhood of $\tilde u_h$. These components yield an explicit radius $\rho>0$, ensuring that the exact solution exists locally and uniquely within the ball $B(\tilde u_h,\rho)\subset V$. The residual bound is obtained from an $H(\mathrm{div})$-conforming $\mathbb{RT}^0$ certificate flux reconstructed through a Marini-type CR--RT route. The purpose of this route is not to replace general higher-order or local mixed equilibrated reconstructions, but to provide an explicit low-order construction whose algebraic structure is transparent on anisotropic simplicial meshes. Within the certified neighborhood, we further enclose selected quantities of interest $\mathcal J(u)$; the baseline enclosure follows from the verified inclusion, while an adjoint-based correction sharpens the resulting intervals. The numerical experiments report the behavior of the computable certification quantities for monotone semilinear models, including anisotropic mesh tests. Unless interval or outward-rounded scalar post-processing is explicitly used, the reported computations should be understood as floating-point evaluations of the derived rigorous estimators.

\end{abstract}

\noindent\textbf{Keywords.}
verified computation ; Newton--Kantorovich ; equilibrated flux ; Crouzeix--Raviart finite element ; Raviart--Thomas finite element ; anisotropic meshes

\medskip
\noindent\textbf{Mathematics Subject Classification 2020.}
65N12; 65N15; 65N30; 65G20

\section{Introduction}\label{sec=intro}
This study develops a low-order equilibrated-flux certification workflow for semilinear diffusion--reaction problems on anisotropic meshes. The construction is based on the relation of the Crouzeix--Raviart (CR) nonconforming element and the lowest-order Raviart--Thomas (RT) element; we refer to this as the CR--RT route. The emphasis is a setting where the primal computation remains a conforming finite element solve, while the certification layer constructs an $H(\mathrm{div})$-conforming $\mathbb{RT}^0$ flux through an auxiliary CR problem and explicit Marini-type reconstruction. The scope is narrower than equilibrated-flux verification frameworks: we focus on the low-order CR--RT structure on anisotropic simplicial meshes.

We consider nonlinear boundary value problems written abstractly as
\begin{align}\label{eq:intro:F}
\mathcal F(u)=0 \quad \text{in } V^*,
\end{align}
where $\mathcal F:V\to V^*$ is Fr\'echet differentiable on a neighborhood of an approximate solution $\tilde u_h\in V$ (the functional setting is fixed in Section~2). Our goal is to certify the existence of an exact weak solution $u\in V$ within a closed ball
\begin{align*}
\displaystyle
B_{\rho} := B(\tilde u_h,\rho):=\{w\in V:\ \|w-\tilde u_h\|_V\leq \rho\},
\end{align*}
for some computable radius $\rho>0$. A Newton--Kantorovich (NK) argument provides a convenient verification mechanism: bounded invertibility of the linearization $D \mathcal F(\tilde u_h)$, together with a dual-norm residual bound for $\mathcal F(\tilde u_h)$ and a local Lipschitz-type bound for $D \mathcal F$ on $B_{\rho}$, yields existence (and local uniqueness) in $B_{\rho}$; see, e.g., \citep{Deim85,Deuf11,OrtRhe70}. The practical difficulty lies in obtaining reliable, cost-effective upper bounds for these quantities.

The verification argument employed in this paper is predicated on an NK mechanism tailored for semilinear elliptic problems. Within this framework, the residual magnitude, the stability of the linearized operator, and a Lipschitz constraint for the nonlinear remainder are integrated into scalar conditions. In our notation, \(p(\rho)\) and \(q(\rho)\) (refer to Section \ref{subsec:NK-theorem}) represent the admissibility of a verification radius and the associated contraction margin, respectively. The foundational principles of this verified-computation approach are well-established; for further details, consult \cite{Nakao1988,Plum2009,KinoshitaKimuraNakao2013, TakayasuLiuOishi2013}, which discuss computer-assisted verification methods for nonlinear elliptic boundary value problems.

Sections~\ref{sec:flux}--\ref{sec:C3} are organized according to these verification requirements. Section~\ref{sec:flux} derives a computable residual bound, Section~\ref{sec:fluxrecon} constructs the certificate flux through an auxiliary CR problem and a Marini-type RT reconstruction, and Sections~\ref{sec:stability}--\ref{sec:C3} provide the stability and Lipschitz bounds needed in the NK conditions. The CR--RT route avoids local patchwise mixed flux reconstructions in the low-order setting considered here, although we do not claim a universal complexity advantage over all equilibrated-flux methods.

The initial contribution involves a residual certification step utilizing a low-order CR--RT equilibrated flux. The use of equilibrated-flux residual estimates and associated guaranteed a posteriori bounds is well-documented \citep{BraPilSch09,ErnVoh15}. In the context of the CR method, various equilibrated flux reconstructions were evaluated in \citep{ErnVoh13}. Verified-computation methodologies for nonlinear elliptic problems, which are based on NK or related inclusion arguments, are also well-established; refer to \citep{NakPluWat19} and the references therein for further details. For semilinear elliptic problems on polygonal domains, Takayasu, Liu, and Oishi devised a verification method grounded in NK theory, projection error estimates, the hypercircle equation, and mixed finite element residual evaluation \citep{TakayasuLiuOishi2013}. Additionally, Liu, Nakao, and Oishi developed a computer-assisted existence verification approach for stationary Navier--Stokes problems in three-dimensional domains by integrating fixed-point verification with quantitative finite element error estimates \citep{LiuNakaoOishi2021}.

The objective of this paper is not to assert the novelty of equilibrated-flux verification, hypercircle-type residual evaluation, or NK theory. Instead, we focus on a specific low-order CR--RT approach: employing the Marini-type relation~\citep{Mar85} to derive an explicit $\mathbb{RT}^0$ certificate flux. The resultant residual bound is integrated as a component of the verification procedure. This low-order construction is not proposed as a substitute for general higher-order equilibrated reconstructions. Rather, it offers a geometrically transparent method that is particularly suitable for anisotropic simplicial meshes, in relation to CR--RT analysis on such meshes~\citep{IshKobTsu21b}.

This statement does not propose a comprehensive theory of anisotropic efficiency applicable to all equilibrated reconstructions. To substantiate the paper's focus on anisotropy, we subsequently demonstrate a low-order Poisson consistency result. This result indicates that the flux-mismatch term $\|\sigma_h-\nabla\tilde u_h\|_{L^2(\Omega)^d}$ diminishes when the underlying CR approximation and the conforming state maintain energy consistency across the anisotropic mesh family. Achieving such energy consistency is a distinct issue in finite element approximation, typically resolved through anisotropic interpolation estimates and corresponding finite element error analysis, as discussed in \citep{IshizakaKobayashiTsuchiya2023,IshizakaPhD2022}.

We further elucidate the computational interpretation of this methodology. Standard equilibrated reconstructions can be achieved either by addressing local patchwise mixed problems or by employing a global mixed finite element method where the flux is treated as a primary unknown. Our approach diverges from these methods: the primal state is determined within a standard conforming space, while the certificate flux is derived from an auxiliary low-order CR problem, followed by an explicit $\mathbb{RT}^0$ reconstruction. Consequently, this method does not eliminate all auxiliary computations; instead, it substitutes patchwise mixed saddle-point reconstructions with a scalar nonconforming auxiliary problem and an explicit $\mathbb{RT}^0$ post-processing step. We do not assert a universal complexity advantage over local mixed reconstructions. The point is to obtain the certificate flux through a scalar CR auxiliary problem and an explicit cellwise RT reconstruction in the low-order anisotropic setting. 

Our second contribution concerns the stability of the linearized operator. In this paper, the stability verification is deliberately coercivity-based. Thus, the main verified setting is the monotone or coercive symmetric elliptic regime in which the linearized operator at the computed state can be certified to be positive in the energy norm. In this setting, we derive a computable lower bound for the linearization \(D\mathcal F(\tilde u_h)\) over the verification neighborhood \(B_\rho\), which provides the stability constant required in the NK conditions.

This restriction is significant. In cases where the reaction term exhibits strong non-monotonicity, the linearized operator may lack coercivity or approach singularity, even when a basic estimate of the negative component seems minimal. Although such conditions are crucial in verified computation, they are not the primary focus of this paper. These scenarios necessitate more refined verification of invertibility, such as employing Nakao-type methods or fixed-point methods for semilinear elliptic boundary value problems; refer to \citep{NakPluWat19,Plum2009} for further details.

Beyond existence, many applications demand certified statements for outputs $\mathcal J(u)$ rather than global errors. Goal-oriented a posteriori analysis~\cite{BecRan01,GilSul02} motivates how residual information propagates to outputs. Here, we go one step further: once existence in $B_\rho$ is certified, we derive a rigorous enclosure
\begin{align*}
\displaystyle
\mathcal J(u)\in [\mathcal J^-, \mathcal J^+],
\end{align*}
where the endpoints are obtained from $\mathcal J(\tilde u_h)$ together with certified bounds on the variation of $\mathcal J$ over the verification ball. The enclosure can be tightened by an adjoint-based correction (Section~\ref{sec:outputs}).

The remainder of the paper is organized as follows. Section~2 fixes the functional setting and the finite element discretization. Section~3 states the NK verification framework. Sections~4--7 develop the certified ingredients. Section~8 constructs verified output bounds for QoIs. Section~9 reports numerical experiments.

\section{Problem setting, discretization, and verification neighborhoods}
\label{sec:problem}

\subsection{Preliminaries}
Let $\Omega \subset \mathbb{R}^d$, $d\in\{2,3\}$ be a bounded polyhedral Lipschitz domain. We set
\begin{align*}
\displaystyle
V := H^1_0(\Omega), \quad V^* := H^{-1}(\Omega),
\end{align*}
and write $\langle \cdot,\cdot\rangle$ for the duality pairing between $V^*$ and $V$. For $\mathcal F \in \mathcal{L}(V,V^*)$, we use the induced operator norm
\begin{align}\label{eq:op-norm}
\| \mathcal F \|_{\mathcal{L}(V,V^*)} 
:= \sup_{\delta u \in V\setminus\{0\}} \frac{\| \mathcal F (\delta u )\|_{V^*}}{\|\delta u\|_{V}}
= \sup_{\delta u,v\in V\setminus\{0\}}
\frac{\langle \mathcal F (\delta u), v\rangle}{\|\delta u\|_{V}\,\|v\|_{V}}.
\end{align}
The Fr\'echet derivative $D \mathcal F(w)\in\mathcal{L}(V,V^*)$ is characterized by
\begin{align*}
\langle D \mathcal F (w) \delta u, v\rangle
:=\left.\frac{d}{d\varepsilon}\right|_{\varepsilon=0}\langle \mathcal F (w+\varepsilon\delta u),v\rangle,
\quad \delta u,v\in V.
\end{align*}

\subsection{Semilinear model} \label{sec:semilinear-problem}
We study nonlinear elliptic problems in variational form. Let $f \in V^*$. Given a fixed diffusion tensor $A:\Omega\to\mathbb{R}^{d\times d}$, we seek $u\in V$ such that
\begin{align} \label{eq:semi-weak}
\int_\Omega A(x) \nabla u \cdot \nabla v dx + \int_\Omega c(x,u) v dx = \langle f,v\rangle \quad \forall v\in V,
\end{align}
with a nonlinear reaction $c(x,u) := b(x,u)  u$, where $b:\Omega\times\mathbb{R}\to\mathbb{R}$. Here, $A$ is symmetric and uniformly positive definite (but may vary with $x$), and it does not depend on $u$, and  $b$ may depend on $x$ and $u$ (e.g.,  $b(x,u)= u^2$).

\begin{remark}

In the context of the residual certification developed in Sections~\ref{sec:flux}--\ref{sec:fluxrecon}, we consider $f\in L^2(\Omega)$. This assumption guarantees that the cellwise projection $\Pi_h^0 f$, along with the element residuals and oscillation terms in the equilibrated-flux estimate, are well-defined as $L^2$-quantities. While the abstract weak formulation can accommodate right-hand sides in $V^*$, the current computable residual estimator is specifically designed for the $L^2$-data setting. Extensions to accommodate rougher right-hand sides would necessitate a separate data approximation or a dual residual formulation, which are beyond the scope of this discussion.

\end{remark}

\begin{remark}
As will be discussed later, we will present the verification framework with a conforming space \(V_h\subset H^1_0(\Omega)\), so that the computed state \(\tilde u_h\) belongs to the NK space \(V\). If a nonconforming discretization, such as the CR finite element method, or an interior penalty discontinuous Galerkin (dG) method is used for the primal solve, one may first construct a conforming companion \(\tilde u_h^c\in V\), for instance by averaging or Oswald-type operators, and then apply the same certification procedure to \(\tilde u_h^c\). In that case, the residual bound acquires additional, fully computable nonconformity terms, such as jump contributions. For related nonconformity estimates for CR finite elements, we refer to \cite{CarstensenMerdon2013}. Equilibrated-flux techniques that cover conforming, nonconforming, dG, and mixed schemes in a unified manner are available in the literature; see, e.g., \cite{ErnVoh15}.
\end{remark}

\begin{remark}

In this paper, we focus exclusively on the semilinear diffusion-reaction framework. Specifically, we do not address quasilinear diffusion operators in this study. Extending the certification workflow to encompass quasilinear diffusion would necessitate further control over the linearized diffusion operator and the associated Lipschitz bounds. Consequently, we consider this a distinct issue and defer it to future research.

\end{remark}

\subsection{Operator form and linearization}\label{subsec:operator-linearization}
We write the semilinear model as an operator equation $\mathcal F(u)=0$ in $V^*$ and consider the Fr\'echet derivative. All differentiability and Lipschitz assumptions are posed on an open set $\mathcal U\subset V$. In the fully discrete setting, we work on an open neighborhood $U\subset V$ containing the computed state $\tilde u_h$; see Sections~\ref{subsec:U-and-balls} and~\ref{subsec:NK-assumptions}. We assume that $\mathcal F$ is Fr\'echet differentiable on $\mathcal U$. Furthermore, we use the Sobolev embedding $V\hookrightarrow L^p(\Omega)$ with $p\in[2,\infty)$ if $d=2$ and $p\in[2,6]$ if $d=3$. We denote the H\"older conjugate by $p^{\prime} :=\frac{p}{p-1}$.

In the semilinear case, we define $\mathcal F:V\to V^*$ as
\begin{align}\label{eq:F-semi}
\langle \mathcal F(w),v\rangle
:= \int_\Omega A(x)\nabla w\cdot\nabla v dx + \int_\Omega c(x,w)  \, v dx - \langle f,v\rangle
\qquad \forall v\in V.
\end{align}
Assuming $c(x,\cdot)$ is continuously differentiable for a.e., $x$ and denote by $\partial_s c(x,s)$ its derivative with respect to the scalar argument $s$. Then, the Fr\'echet derivative $D \mathcal F(w)\in\mathcal{L}(V,V^*)$ satisfies
\begin{align}\label{eq:DF-semi}
\langle D \mathcal F(w)\,\delta u, v\rangle
= \int_\Omega A(x)\nabla(\delta u)\cdot\nabla v dx
 + \int_\Omega \partial_s c(x,w)  \, \delta u \,  v dx
\quad \forall  \delta u,v\in V,
\end{align}
where $\partial_s c(x,w) = b(x,w) + w \partial_s b(x,w)$. Here, $s$ denotes the scalar state argument. On the open ball $\mathcal U$, we assume: for all $w\in \mathcal U$ one has $c(\cdot,w)\in L^{p^{\prime}}(\Omega)$ and $\displaystyle \partial_s c(\cdot,w)\in L^{\frac{p}{p-2}}(\Omega)$ (with the usual convention $ \frac{p}{p-2}=\infty$ if $p=2$).

\subsection{$A$-weighted energy norm}\label{subsec:energy-norm}
Verification will be formulated in terms of an energy norm on $V$ and its dual norm on $V^*$. We equip $V$ with a fixed energy norm induced by a uniformly elliptic matrix field $A \in L^\infty(\Omega)^{d\times d}$. We assume that $A$ is symmetric and there exist constants $0<\alpha_0 \leq \beta_0<\infty$ such that
\begin{align*}
\displaystyle
\alpha_0|\xi|^2 \leq A(x)\xi\cdot\xi \leq \beta_0|\xi|^2 \quad \text{for a.e. }x \in \Omega, \quad \forall \xi \in \mathbb{R}^d.
\end{align*}
We define
\begin{align}
\displaystyle
\|v\|_V := \left(\int_\Omega A \nabla v \cdot \nabla v dx \right)^{\frac{1}{2}}, \quad v\in V, \label{eq:V-norm}
\end{align}
and the associated dual norm on $V^*$ as
\begin{align}
\displaystyle
\| \ell \|_{V^*}:=\sup_{v\in V\setminus\{0\}}\frac{\langle \ell,v\rangle}{\|v\|_V}. \label{eq:dual-norm}
\end{align}

\subsection{Finite elements and the computed state}\label{subsec:discretization}
Let $\mathbb{T}_h = \{ T \}$ be a simplicial mesh of $\overline{\Omega}$ made up of closed $d$-simplices, such as $\displaystyle \overline{\Omega} = \bigcup_{T \in \mathbb{T}_h} T$, with $h := \max_{T \in \mathbb{T}_h} h_{T}$, where $ h_{T} := \mathrm{diam}(T)$. For simplicity, we assume that $\mathbb{T}_h$ is conformal: that is, $\mathbb{T}_h$ is a simplicial mesh of $\overline{\Omega}$ without hanging nodes. Let $|\cdot|_d$ denote the $d$-dimensional Hausdorff measure.

Let $V_h \subset V$ be a conforming finite element space consisting of continuous, piecewise polynomial functions of degree $k \geq 1$ with vanishing trace on $\partial\Omega$. The discrete problem reads: find $u_h\in V_h$ such that
\begin{align}
\displaystyle
\langle \mathcal F(u_h),v_h\rangle=0 \quad \forall v_h\in V_h. \label{eq:disc_weak}
\end{align}
In practice, we obtain a computed approximation $\tilde u_h\in V_h$ (e.g., a terminated Newton iterate). The certification layer developed below takes $\tilde u_h$ as input and provides guarantees for the continuous solution $u\in V$.

\begin{remark}
As will be discussed later, we will present the verification framework with a conforming space $V_h\subset H^1_0(\Omega)$, so that the computed state $\tilde u_h$ belongs to the NK space $V$. If a nonconforming discretization (e.g., Crouzeix--Raviart (CR)) or an interior penalty discontinuous Galerkin (dG) method is used for the primal solve, one may first construct a conforming {companion} $\tilde u_h^c\in V$ (e.g., by averaging/Oswald-type operators) and then apply the same certification procedure to $\tilde u_h^c$. In that case, the residual bound acquires additional, fully computable nonconformity terms (jump contributions). Equilibrated-flux techniques that cover conforming, nonconforming, dG, and mixed schemes in a unified manner are available in the literature; see, e.g., \cite{ErnVoh15}.	
\end{remark}

\subsection{Open neighborhood and candidate balls}\label{subsec:U-and-balls}
We fix an open neighborhood $\mathcal U\subset V$ of the computed state $\tilde u_h$ on which $\mathcal F$ is Fr\'echet differentiable. For candidate radius $\rho>0$, we consider closed balls
$B(\tilde u_h, \rho) =\{w\in V:\ \|w-\tilde u_h\|_V \leq \rho\}\subset \mathcal U$, and determine an admissible $\rho$ a posteriori by checking scalar verification inequalities; see Section~\ref{sec:NK}.

\subsection{Quantities of interest}\label{subsec:qoi}
In many applications, one is not primarily interested in the full field $u$, but in a specific output functional (quantity of interest, QoI) $\mathcal J(u)$. Our goal is to provide a posteriori-verified bound for $\mathcal J(u)$ once the existence (and local uniqueness) of $u$ has been certified in a neighborhood of the computed state $\tilde u_h$.

Let $\mathcal J:V\to\mathbb{R}$ be well-defined on the open verification neighborhood $\mathcal U\subset V$ introduced in Sections~\ref{subsec:U-and-balls}--\ref {subsec:NK-assumptions} (or on an open set containing the verified ball). We assume that $\mathcal J$ is Fr\'echet differentiable on $\mathcal U$ and denote its derivative by $D \mathcal J (w)\in V^*$. In the output certification step, we will bound the variation of $D \mathcal J$ on candidate balls by a computable function $L_{\mathcal J}(\rho)$, in complete analogy with the bound for $D \mathcal F$.

\paragraph*{\textbf {Typical examples.}}
This study confines the certified output analysis to volume-type quantities of interest (QoIs) for which the derivatives possess an \(L^2(\Omega)\)-density. Typical examples include:
\begin{itemize}
\item {Linear volume outputs:} \(\mathcal J(u)=\int_\Omega \psi\,u\,dx\) with \(\psi\in L^2(\Omega)\). Then, \(D \mathcal J(w)(\delta u)=\int_\Omega \psi\,\delta u\,dx\).
\item {Local averages:} \(\mathcal J(u)=|\omega|_d^{-1}\int_\omega u\,dx\) for a measurable subdomain \(\omega\subset\Omega\), provided the corresponding density belongs to \(L^2(\Omega)\) or is replaced by a suitable \(L^2\) regularization.
\item {Nonlinear volume outputs:} \(\mathcal J(u)=\int_\Omega \Phi(x,u)\,dx\), where \(\Phi(x,\cdot)\) is differentiable and the derivative \(\partial_s\Phi(x,w)\) defines an \(L^2(\Omega)\)-density on the verification neighborhood. Then, \(D \mathcal J(w)(\delta u)=\int_\Omega \partial_s\Phi(x,w)\,\delta u\,dx\).
\end{itemize}

\begin{remark}
Boundary outputs, normal-flux outputs, and other trace-type functionals are not addressed in this paper. A rigorous analysis of these would necessitate further trace estimates, weak boundary representations, or flux-specific residual arguments, which are deferred to future research.
\end{remark}

\paragraph*{\textbf{Adjoint viewpoint.}}
For a fixed linearization point $w\in\mathcal U$, the output error is represented through an adjoint problem driven by the functional $D \mathcal J(w)\in V^*$. This connection underpins the goal-oriented bounds developed later, and it motivates formulating assumptions on $\mathcal J$ directly in terms of its derivative.

\section{NK certification: existence and localization}\label{sec:NK}
This section provides an abstract verification step for the existence (and local uniqueness) of a weak solution of $\mathcal F(u)=0$ near a computed approximation $\tilde u_h\in V_h$. The verification radius $\rho$ is not known a priori; it is determined a posteriori from computable scalar inequalities. The results of this section are classical in nonlinear functional analysis and Newton theory; see, for instance, \cite{OrtRhe70, Deim85,Deuf11}.

\subsection{Local assumptions and scalar ingredients}\label{subsec:NK-assumptions}
Let $\mathcal U \subset V$ be open such that $\mathcal F:V\to V^*$ is Fr\'echet differentiable on $\mathcal U$ and $\tilde u_h\in \mathcal U$. Fix $\bar\rho>0$ such that $B(\tilde u_h,\bar\rho)\subset \mathcal U$. For any $0<\rho\le \bar\rho$, we consider
\begin{align*}
\displaystyle
B_\rho =\{w\in V:\ \|w-\tilde u_h\|_V\le \rho\}\subset \mathcal U,
\end{align*}
where $\|\cdot\|_V$ is the energy norm from \eqref{eq:V-norm}. We write $\|\cdot\|_{\mathcal L(V,V^*)}$ for the induced operator norm \eqref{eq:op-norm}. We linearize at the computed state and for any $w \in \mathcal U$, we set
\begin{align*}
\displaystyle
\mathcal L_w := D \mathcal F(w)\in\mathcal L(V,V^*).
\end{align*}

We assume that the following quantities are available (as \emph{certified upper bounds}):
\begin{enumerate}
\item[(C1)] {Residual bound at $\tilde u_h$.}
There exists a computable number $\mathfrak r\ge 0$ such that
\begin{align}\label{eq:C1}
\| \mathcal F(\tilde u_h)\|_{V^*}\leq \mathfrak r.
\end{align}
\item[(C2)] {Stability/invertibility of the linearization.}
The operator $\mathcal L_{\tilde u_h}:V\to V^*$ is an isomorphism and there exists a constant $\alpha>0$ such that
\begin{align}\label{eq:C2}
\|\mathcal L_{\tilde u_h}^{-1}\|_{\mathcal L(V^*,V)} \le \alpha^{-1}.
\end{align}

For later use in Theorem~\ref{thm:NK}, we introduce the scaled residual parameter

\begin{align}\label{eq:eta-def}
\eta := \frac{\mathfrak r}{\alpha}.
\end{align}

The residual, normalized by the stability lower bound, is employed in the admissibility function \(p(\rho)\).

\item[(C3)] {Local Lipschitz bound for $D \mathcal F$ on candidate balls.}
There exists a (computable) nondecreasing function $L:(0,\bar\rho]\to(0,\infty)$ such that for every $0<\rho\le \bar\rho$ and all $w,z\in B_\rho$,
\begin{align}\label{eq:C3}
\| \mathcal L_{w} - \mathcal L_{z} \|_{\mathcal L(V,V^*)}\le L(\rho)\,\|w-z\|_{V}.
\end{align}
\end{enumerate}

Assumptions \textup{(C1)}--\textup{(C3)} will be realized by the constructions in the subsequent sections: \textup{(C1)} from equilibrated-flux residual bounds, \textup{(C2)} from a certified stability estimate for the linearized operator, and \textup{(C3)} from model-dependent coefficient-derivative bounds.

\begin{remark}[How (C2) is verified in this paper]\label{rem:C2-symmetric}
In nonsymmetric settings, one would typically verify (C2) via an inf--sup theory (possibly involving
the adjoint operator). In the present work, we focus on the symmetric elliptic case, for which a
coercivity estimate is sufficient.
More precisely, we compute $\alpha>0$ such that
\begin{align*}
\displaystyle
\langle \mathcal L_{\tilde u_h} v, v\rangle \geq \alpha \|v\|_V^2 \quad \forall v\in V.
\end{align*}
Then, $\mathcal L_{\tilde u_h}:V\to V^*$ is bijective and $\|\mathcal L_{\tilde u_h}^{-1}\|_{\mathcal L(V^*,V)}\leq \alpha^{-1}$ by the Lax--Milgram lemma; hence (C2) holds.

This paper intentionally imposes a restriction. When the linearized reaction term includes a negative component, the bilinear form may lack coercivity or approach singularity, even if the negative component is minimal, as indicated by a basic Poincar\'e-type estimate. These non-monotone conditions are significant in verified computation and necessitate more refined techniques for verifying invertibility, such as Nakao-type methods and fixed-point approaches for semilinear elliptic boundary value problems (see, for example, \cite{NakPluWat19,Plum2009}). This study does not pursue verification of non-coercive stability.

\end{remark}

\subsection{Existence and local uniqueness via a contraction}\label{subsec:NK-theorem}

\begin{lemma}[Quadratic remainder bound]\label{lem:remainder}
Assume \textup{(C3)} and fix $0<\rho\le \bar\rho$. Then, for any $w\in B_\rho$ one has
\begin{align}\label{eq:remainder}
\|\mathcal F(w) - \mathcal F(\tilde u_h)-\mathcal L_{\tilde u_h}(w-\tilde u_h)\|_{V^*}
\leq \frac{L(\rho)}{2}\,\|w-\tilde u_h\|_{V}^{2}.
\end{align}
\end{lemma}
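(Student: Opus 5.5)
We plan to use the standard integral form of the mean value theorem along the segment joining $\tilde u_h$ and $w$. Set $e:=w-\tilde u_h$ and $w_t:=\tilde u_h+t e$ for $t\in[0,1]$. Since $B_\rho$ is convex and both endpoints lie in it, every $w_t$ belongs to $B_\rho\subset\mathcal U$, where $\mathcal F$ is Fr\'echet differentiable. To stay within scalar calculus we test against a fixed $v\in V$. Define $\varphi(t):=\langle \mathcal F(w_t),v\rangle$. By the chain rule, $\varphi$ is differentiable on $[0,1]$ with
\begin{align*}
\varphi'(t)=\langle \mathcal L_{w_t} e, v\rangle .
\end{align*}

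The first step is to justify the identity
\begin{align*}
\langle \mathcal F(w)-\mathcal F(\tilde u_h)-\mathcal L_{\tilde u_h}e, v\rangle=\int_0^1 \langle (\mathcal L_{w_t}-\mathcal L_{\tilde u_h})e, v\rangle\,dt .
\end{align*}
This requires $\varphi'$ to be integrable. Assumption (C3) gives $|\varphi'(t)-\varphi'(s)|\le L(\rho)\,|t-s|\,\|e\|_V^2\|v\|_V$, because $w_t,w_s\in B_\rho$ and $\|w_t-w_s\|_V=|t-s|\,\|e\|_V$. Hence $\varphi'$ is Lipschitz, in particular continuous, and the fundamental theorem of calculus applies. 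This regularity check is the only delicate point. Differentiability alone would otherwise force us into a mean-value inequality, which would lose the factor $1/2$.

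The second step is to bound the integrand using (C3) with $z=\tilde u_h$ and the operator-norm definition \eqref{eq:op-norm}:
\begin{align*}
|\langle (\mathcal L_{w_t}-\mathcal L_{\tilde u_h})e,v\rangle|\le L(\rho)\,\|w_t-\tilde u_h\|_V\,\|e\|_V\,\|v\|_V = L(\rho)\,t\,\|e\|_V^2\|v\|_V .
\end{align*}
Integrating $t$ over $[0,1]$ gives the factor $1/2$. Taking the supremum over $v\neq0$ in the dual norm \eqref{eq:dual-norm} then yields \eqref{eq:remainder}.
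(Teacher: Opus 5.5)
Your proof is correct and follows the paper's argument: the fundamental theorem of calculus along the segment $w_t=\tilde u_h+te$, then (C3) with $z=\tilde u_h$ to bound the integrand by $L(\rho)\,t\,\|e\|_V^2$, then integration in $t$ to produce the factor $\frac12$. Your scalarization via $\varphi(t)=\langle\mathcal F(w_t),v\rangle$ and the Lipschitz check on $\varphi'$ only make explicit the regularity that the paper's Bochner-integral step takes for granted.

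One small correction to your aside: mere differentiability would not lose the factor $\frac12$. Applying the mean value theorem to $t\mapsto\varphi(t)-\varphi(0)-t\langle\mathcal L_{\tilde u_h}e,v\rangle\mp\frac{L(\rho)}{2}t^2\|e\|_V^2\|v\|_V$ is a monotonicity argument that recovers the same bound without continuity of $\varphi'$.
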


\begin{proof}
A proof can be found in Appendix	\ref{Appendix=A=1}.
\end{proof}

We define the Newton (Kantorovich) map $\mathcal N:\mathcal U\to V$ by
\begin{equation}\label{eq:Newton-map}
\mathcal N(w) := w - \mathcal L_{\tilde u_h}^{-1} \mathcal F(w),
\end{equation}
where $\mathcal L_{\tilde u_h}^{-1} \mathcal F(w)\in V$ is the unique solution $\delta\in V$ of the linear problem
\begin{equation}\label{eq:Newton-correction}
\mathcal L_{\tilde u_h} \delta = \mathcal F(w)\quad\text{in }V^*.
\end{equation}
Equivalently, $\mathcal N(w)=w-\delta$ with $\delta$ given by \eqref{eq:Newton-correction}. A fixed point $u=\mathcal N(u)$ is therefore equivalent to $\mathcal F(u)=0$.

Under \textup{(C1)}--\textup{(C3)}, we show that $\mathcal{N}$ maps $B_{\rho}$ into itself and is a contraction on $B_{\rho}$.

The subsequent localization result represents a standard NK-type argument, which is closely associated with fixed-point and radii-polynomial formulations employed in verified computation. This inclusion in the current notation serves to explicitly demonstrate the integration of the three computable quantities (C1)--(C3) into the finite element certification procedure; refer to \cite{Plum2009,NakPluWat19} for further details.

\begin{theorem}[Verified existence and localization]\label{thm:NK}
Assume \textup{(C1)}--\textup{(C3)}. Fix $0<\rho \leq \bar\rho$ and we define
\begin{align}\label{eq:pq}
p(\rho):=\eta + \frac{L(\rho)}{2\alpha}\rho^{2}-\rho,
\quad
q(\rho):=\frac{L(\rho)}{\alpha}\rho,
\end{align}
where $\eta$ is given by \eqref{eq:eta-def}. If
\begin{align}\label{eq:NK-conds}
p(\rho)\leq 0
\quad \text{and} \quad
q(\rho) < 1,
\end{align}
then there exists a unique $u\in B_\rho$ such that $\mathcal F(u)=0$. Furthermore, $\mathcal N$ maps $B_\rho$ into itself and is a contraction on $B_\rho$ with contraction factor $q(\rho)$.
\end{theorem}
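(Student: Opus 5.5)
The plan is to apply the Banach fixed-point theorem to the Newton map $\mathcal N$ from \eqref{eq:Newton-map} on the closed ball $B_\rho$. This ball is a complete metric space, being a closed subset of the Hilbert space $V$. The argument has two parts, self-mapping and contraction, both resting on one algebraic identity. For $w\in B_\rho$,
\begin{align*}
\mathcal N(w)-\tilde u_h
= -\mathcal L_{\tilde u_h}^{-1}\bigl[\mathcal F(w)-\mathcal F(\tilde u_h)-\mathcal L_{\tilde u_h}(w-\tilde u_h)\bigr]
-\mathcal L_{\tilde u_h}^{-1}\mathcal F(\tilde u_h).
\end{align*}
This follows by adding and subtracting $\mathcal L_{\tilde u_h}^{-1}\mathcal F(\tilde u_h)$ and using $\mathcal L_{\tilde u_h}^{-1}\mathcal L_{\tilde u_h}(w-\tilde u_h)=w-\tilde u_h$.

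\textbf{Self-mapping.} Take norms in the identity above. Use (C2) to bound $\|\mathcal L_{\tilde u_h}^{-1}\|\le\alpha^{-1}$, (C1) for the residual term, and Lemma~\ref{lem:remainder} for the bracket. This gives
\begin{align*}
\|\mathcal N(w)-\tilde u_h\|_V\le \frac{\mathfrak r}{\alpha}+\frac{L(\rho)}{2\alpha}\|w-\tilde u_h\|_V^2\le \eta+\frac{L(\rho)}{2\alpha}\rho^2 .
\end{align*}
The right-hand side is at most $\rho$ exactly when $p(\rho)\le0$. Hence $\mathcal N(B_\rho)\subset B_\rho$.

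\textbf{Contraction.} For $w,z\in B_\rho$ we have
\begin{align*}
\mathcal N(w)-\mathcal N(z)=-\mathcal L_{\tilde u_h}^{-1}\bigl[\mathcal F(w)-\mathcal F(z)-\mathcal L_{\tilde u_h}(w-z)\bigr].
\end{align*}
Since $B_\rho$ is convex, the segment $z+t(w-z)$, $t\in[0,1]$, lies in $B_\rho\subset\mathcal U$. The mean value formula in integral form (the same tool presumably used for Lemma~\ref{lem:remainder} in Appendix~\ref{Appendix=A=1}) then gives
\begin{align*}
\mathcal F(w)-\mathcal F(z)-\mathcal L_{\tilde u_h}(w-z)=\int_0^1\bigl(\mathcal L_{z+t(w-z)}-\mathcal L_{\tilde u_h}\bigr)(w-z)\,dt .
\end{align*}
Because both $z+t(w-z)$ and $\tilde u_h$ lie in $B_\rho$, (C3) bounds the integrand by $L(\rho)\|z+t(w-z)-\tilde u_h\|_V\|w-z\|_V\le L(\rho)\rho\|w-z\|_V$. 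Combined with (C2), this yields $\|\mathcal N(w)-\mathcal N(z)\|_V\le q(\rho)\|w-z\|_V$, and $q(\rho)<1$ by hypothesis.

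\textbf{Conclusion.} Banach's theorem gives a unique fixed point $u\in B_\rho$. Since $\mathcal L_{\tilde u_h}$ is injective, $u=\mathcal N(u)$ is equivalent to $\mathcal F(u)=0$, so $u$ is the unique zero of $\mathcal F$ in $B_\rho$.

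The main obstacle is justifying the integral mean value formula. This needs $t\mapsto\mathcal F(z+t(w-z))$ to be $C^1$ (or at least absolutely continuous) as a $V^*$-valued map. Fréchet differentiability alone gives only pointwise derivatives, so the plan is to note that (C3) makes $D\mathcal F$ Lipschitz, hence continuous, on $B_\rho$, so the fundamental theorem of calculus applies. Alternatively, one can avoid Bochner integrals by testing with $v\in V$ and applying the scalar mean value theorem to $t\mapsto\langle\mathcal F(z+t(w-z)),v\rangle$, then taking the supremum over $\|v\|_V=1$.
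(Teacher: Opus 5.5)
Your proposal is correct and follows the paper's proof essentially step for step. You use the same add-and-subtract identity combined with Lemma~\ref{lem:remainder}, (C1) and (C2) for the self-mapping, the same integral mean value formula with the bound $L(\rho)\|z+t(w-z)-\tilde u_h\|_V\le L(\rho)\rho$ for the contraction, and Banach's fixed-point theorem to conclude. Your extra justification of the fundamental theorem of calculus, either via continuity of $D\mathcal F$ on $B_\rho$ from (C3) or via the scalar mean value theorem applied to $t\mapsto\langle\mathcal F(z+t(w-z)),v\rangle$, fills in a point the paper invokes without comment.
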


\begin{proof}
Step 1: $\mathcal N(B_\rho)\subset B_\rho$. Let $w\in B_\rho$. From the definition of $\mathcal{N}$,
\begin{align*}
\displaystyle
\mathcal{N}(w)-\tilde u_h
&= (w -\tilde u_h ) - \mathcal L_{\tilde u_h}^{-1} \mathcal{F}(w) \\
&= \mathcal L_{\tilde u_h}^{-1} \mathcal L_{\tilde u_h} (w -\tilde u_h ) - \mathcal L_{\tilde u_h}^{-1} \mathcal{F}(w) \\
&=  \mathcal L_{\tilde u_h}^{-1}\left( \mathcal{F}(\tilde u_h) - \mathcal{F}(w) + \mathcal L_{\tilde u_h} (w -\tilde u_h ) - \mathcal{F}(\tilde u_h)  \right).
\end{align*}
Using \eqref{eq:C1}, \eqref{eq:C2}, and Lemma~\ref{lem:remainder}, we obtain
\begin{align*}
\displaystyle
\|\mathcal N(w)-\tilde u_h\|_{V}
&\leq \|\mathcal L_{\tilde u_h}^{-1}\mathcal F(\tilde u_h)\|_{V}
 + \|\mathcal L_{\tilde u_h}^{-1}\|_{\mathcal L(V^*,V)}\,
   \|\mathcal F(w)- \mathcal F(\tilde u_h)-\mathcal L_{\tilde u_h}(w-\tilde u_h)\|_{V^*} \\
&\leq \frac{\|\mathcal F(\tilde u_h)\|_{V^*}}{\alpha} + \frac{1}{\alpha}\cdot \frac{L(\rho)}{2}\,\rho^2
\le \eta + \frac{L(\rho)}{2\alpha}\rho^2.
\end{align*}
Thus, $p(\rho) \leq 0$ implies $\|\mathcal N(w)-\tilde u_h\|_V \leq \rho$, i.e., $\mathcal N(w)\in B_\rho$.\\

\noindent
Step 2: $\mathcal{N}$ is a contraction on the ball. 
Let $w,z\in B_\rho$ and set $\delta:=w-z$. For $t\in[0,1]$, we have
\begin{align*}
\displaystyle
z + t\delta=(1-t) z+ t w \in B_\rho
\end{align*}
since $B_\rho$ is convex. By the fundamental theorem of calculus,
\begin{align*}
\displaystyle
\mathcal F(w)- \mathcal F(z) = \int_0^1 \mathcal L_{z + t \delta}\delta dt,
\end{align*}
which leads to
\begin{align*}
\displaystyle
\mathcal F(w)-\mathcal F(z)-\mathcal L_{\tilde u_h}\delta=\int_0^1 \left ( \mathcal L_{z + t \delta}  - \mathcal L_{\tilde u_h} \right)\delta dt.
\end{align*}
Because $z+t\delta\in B_\rho$ and $\tilde u_h\in B_\rho$, \eqref{eq:C3} yields
\begin{align*}
\displaystyle
\| \mathcal L_{z + t \delta} - \mathcal L_{\tilde u_h} \|_{\mathcal L(V,V^*)} \leq L(\rho) \|z+t\delta-\tilde u_h\|_V \leq L(\rho)\,\rho.
\end{align*}
Therefore,
\begin{align*}
\displaystyle
\| \mathcal F(w)- \mathcal F(z)-\mathcal L_{\tilde u_h}(w-z)\|_{V^*}
&\leq \int_0^1 \| \mathcal L_{z + t \delta} - \mathcal L_{\tilde u_h} \|_{\mathcal L(V,V^*)} \|\delta\|_V dt \\
&\leq \int_0^1 L(\rho) \rho \|\delta\|_V dt
= L(\rho) \rho \|w-z\|_V.
\end{align*}
Using \eqref{eq:C2}, we conclude
\begin{align*}
\displaystyle
\|\mathcal N(w)-\mathcal N(z)\|_{V}
&= \|\mathcal L_{\tilde u_h}^{-1}\left ( \mathcal F(w)- \mathcal F(z)-\mathcal L_{\tilde u_h}(w-z) \right)\|_V \\
&\leq \frac{L(\rho)}{\alpha}\rho\,\|w-z\|_V
= q(\rho)\,\|w-z\|_V.
\end{align*}
Thus, $q(\rho)<1$ implies that $\mathcal N$ is a contraction on $B_\rho$.

The Banach fixed-point theorem yields a unique fixed point $u\in B_\rho$ of $\mathcal N$,
and $\mathcal N(u)=u$ is equivalent to $\mathcal F(u)=0$.
\end{proof}

\begin{remark}[Practical verification workflow]
The practical verification workflow is as follows.
\begin{description}
  \item[(1)] Compute a discrete approximation $\tilde u_h$ using a standard numerical solver.
  \item[(2)] Compute {certified} bounds for the three inputs in \textup{(C1)--(C3)}:
  a dual-norm residual bound $\mathfrak r$ at $\tilde u_h$, a stability bound
  $\|\mathcal L_{\tilde u_h}^{-1}\|_{\mathcal L(V^*,V)} \leq \alpha^{-1}$ for $\mathcal L_{\tilde u_h}=D \mathcal F(\tilde u_h)$,   and a Lipschitz-type bound $L(\rho)$ valid on $B_\rho$.
  \item[(3)] Choose a candidate radius $\rho$ and evaluate the scalar conditions in
  Theorem~\ref{thm:NK}. If $p(\rho)\le 0$ and $q(\rho)<1$, then a solution exists and is
  locally unique in $B_\rho$.
\end{description}

\smallskip
The verification reduces to evaluating a few scalar quantities (norm bounds and radius checks).
If fully rigorous guarantees are required, outward rounding or interval arithmetic can be used
{only} in this post-processing step to certify $\mathfrak r$, $\alpha$, $L(\rho)$ and hence
$p(\rho)$ and $q(\rho)$.
\end{remark}

\begin{remark}[Practical choice of the verification radius $\rho$]\label{rem:rho-choice}
In practice, $\rho$ is determined a posteriori by checking the scalar conditions $p(\rho)\le 0$ and $q(\rho)<1$ from Theorem~\ref{thm:NK}. A robust strategy is:

\begin{description}
  \item[(\roman{sone}) Initial test.] Try $\rho=2\eta$ (with $\eta=r/\alpha$). If both conditions hold, accept this radius.
  \item[(\roman{stwo}) Bracketing.]  If the initial test fails, bracket an admissible radius by testing a geometric sequence (e.g.,  $\rho_j=2^{-j}\rho_0$) until $p(\rho_j)\le 0$ and $q(\rho_j)<1$ hold.
  \item[(\roman{sthree}) Bisection.] Once a bracket is available, apply bisection to obtain a nearly maximal admissible radius. If no admissible $\rho$ is found, improve $\tilde u_h$ (e.g., additional Newton steps) and/or refine the mesh, and recompute the constants.
\end{description}
This is in the spirit of verified computation via radius-polynomial-type arguments, in which a one-dimensional search is performed with certified inequality checks. Such a one-dimensional radius search with certified inequality checks is standard in verified computations based on NK fixed-point arguments, see, e.g., \cite{NakPluWat19,Rum10,Tuc11}.
\end{remark}

\section{Certifying (C1): a guaranteed residual estimate via equilibrated fluxes}\label{sec:flux}

This section provides the computable residual bound required for the first verification condition (C1). We first derive a residual representation in terms of an arbitrary \(H(\mathrm{div})\)-conforming flux and then specialize it to an equilibrated-flux estimate. This separates the abstract NK verification step from the finite element construction of the certificate flux.

\subsection{Residual representation}\label{subsec:flux_repr}
Let $\tilde u_h\in V_h$ be the computed approximation. We recall the residual functional $\mathcal F(\tilde u_h)\in V^*$ defined in Section \ref{subsec:operator-linearization}. Let $\sigma_h \in H(\mathrm{div};\Omega)$ be any $H(\mathrm{div})$-conforming flux. Using elementwise integration by parts, we have for all $v\in V$,
\begin{align}
\langle \mathcal F(\tilde u_h),v\rangle
&=\int_\Omega \left( A \nabla \tilde u_h - \sigma_h \right)\cdot\nabla v\,dx
 + \sum_{T\in\mathbb T_h}\int_T \mathcal R_T(\sigma_h)\, v\,dx,
\label{eq:res_split}
\end{align}
where the {element residual} $\mathcal R_T(\sigma_h)$ is given by
\begin{align} \label{eq:elem_residual}
\displaystyle
\mathcal R_T(\sigma_h):= c(x,\tilde u_h) - f - \nabla\!\cdot \sigma_h \quad \text{(semilinear diffusion--reaction)}.
\end{align}

As stated in Section \ref{subsec:energy-norm}, we consider the computable tensor field $A \in L^\infty(\Omega;\mathbb R^{d\times d})$ that is symmetric and uniformly positive definite: there exist constants $0<\alpha_0\le \beta_0<\infty$ such that
\begin{align*}
\displaystyle
\alpha_0|\xi|^2 \leq A (x)\xi\cdot\xi \leq \beta_0|\xi|^2
\quad \text{for a.e.\ }x\in\Omega,\ \forall\,\xi\in\mathbb R^d.
\end{align*}
Recall that this defines the energy norm
\begin{align*}
\displaystyle
\|v\|_V = \left( \int_\Omega A \nabla v\cdot\nabla v\,dx \right)^{\frac{1}{2}},
\end{align*}
and implies $A^{-1}\in L^\infty(\Omega;\mathbb R^{d\times d})$, so that
\begin{align*}
\displaystyle
\|w\|_{A^{-1}} = \left( \int_\Omega w\cdot A^{-1}w dx \right)^{\frac{1}{2}}
\end{align*}
is well defined for all $w\in L^2(\Omega)^d$.

\begin{lemma}[Weighted Cauchy--Schwarz]\label{lem:wCS}
Let $A \in L^\infty(\Omega;\mathbb R^{d\times d})$ be symmetric and uniformly positive definite.
Then, for all $u,v\in L^2(\Omega)^d$, one has
\begin{align}
\displaystyle
\left |\int_\Omega u\cdot v\,dx \right |
\leq
\left (\int_\Omega u\cdot A^{-1}u\,dx \right)^{\frac{1}{2}}
\left (\int_\Omega v\cdot A v\,dx\right)^{\frac{1}{2}} = \| u\|_{A^{-1}} \| v \|_V. \label{eq:wCS}
\end{align}
\end{lemma}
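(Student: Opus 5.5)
Since $A(x)$ is symmetric and uniformly positive definite for a.e.\ $x$, it has a pointwise symmetric positive definite square root $A^{1/2}(x)$, and its inverse satisfies $A^{-1/2}(x)=(A^{1/2}(x))^{-1}$. The plan is to insert $I=A^{-1/2}A^{1/2}$ into the integrand and then apply the ordinary Cauchy--Schwarz inequality twice. The first application is pointwise in $\mathbb R^d$, the second in $L^2(\Omega)$.

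\emph{Pointwise step.} For a.e.\ $x$, symmetry of $A^{-1/2}(x)$ gives
\begin{align*}
u\cdot v = (A^{-1/2}u)\cdot(A^{1/2}v).
\end{align*}
The Euclidean Cauchy--Schwarz inequality then yields $|u\cdot v|\le |A^{-1/2}u|\,|A^{1/2}v|$. Moreover,
\begin{align*}
|A^{-1/2}u|^2 = u\cdot A^{-1}u, \qquad |A^{1/2}v|^2 = v\cdot Av,
\end{align*}
again by symmetry.

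\emph{Integrated step.} Integrate the pointwise bound over $\Omega$ and apply the Cauchy--Schwarz inequality in $L^2(\Omega)$ to the scalar functions $|A^{-1/2}u|$ and $|A^{1/2}v|$. This gives exactly \eqref{eq:wCS}. Both factors are finite because the ellipticity bounds imply $\beta_0^{-1}|\xi|^2\le A^{-1}\xi\cdot\xi\le\alpha_0^{-1}|\xi|^2$ and $A\xi\cdot\xi\le\beta_0|\xi|^2$, so they are controlled by $\|u\|_{L^2}$ and $\|v\|_{L^2}$. This also shows that $u\cdot v\in L^1(\Omega)$.

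\emph{Notation.} The final identity with $\|v\|_V$ is understood as notation: the right factor $(\int_\Omega v\cdot Av\,dx)^{1/2}$ coincides with \eqref{eq:V-norm} when $v$ is a gradient $\nabla w$. That is the case of use in \eqref{eq:res_split}.

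\emph{Measurability.} The only mildly delicate point is the measurability of $x\mapsto A^{\pm1/2}(x)$. I would avoid it entirely by using a purely algebraic pointwise inequality for SPD matrices instead of square roots:
\begin{align*}
|a\cdot b|\le (a\cdot A^{-1}a)^{1/2}(b\cdot Ab), \qquad a,b\in\mathbb R^d.
\end{align*}
This is the Cauchy--Schwarz inequality for the inner product $(a,b)_{A^{-1}}=a\cdot A^{-1}b$, applied to $a$ and $Ab$. It requires only measurability of $A$ and $A^{-1}$, which follows from continuity of matrix inversion on SPD matrices.
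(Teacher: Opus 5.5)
Your proposal is correct and follows essentially the paper's proof: a pointwise Cauchy--Schwarz inequality in a matrix-weighted inner product, followed by integration over $\Omega$ and Cauchy--Schwarz in $L^2(\Omega)$. The paper uses the inner product $\xi\cdot A(x)\eta$ applied to $A^{-1}u$ and $v$, which is your $A^{-1}$-inner-product (or square-root) version with the roles of $A$ and $A^{-1}$ swapped; note only the typo in your algebraic display, where the second factor should be $(b\cdot Ab)^{1/2}$.
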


\begin{proof}
A proof can be found in Appendix	\ref{Appendix=A=2}.
\end{proof}

\begin{remark}
Any reconstructed flux $\sigma_h$ used in the certification layer is assumed to satisfy $\sigma_h\in H(\mathrm{div};\Omega)\subset L^2(\Omega)^d$. Consequently, the flux mismatch $u : =\sigma_h-A \nabla \tilde u_h$ belongs to $L^2(\Omega)^d$, and the estimate
\begin{align*}
\displaystyle
\left |\int_\Omega (\sigma_h - A \nabla \tilde u_h ) \cdot \nabla v dx \right|
\leq \| \sigma_h - A \nabla \tilde u_h \|_{A^{-1}}\ \|v\|_V
\quad \forall v\in V
\end{align*}
is meaningful.
\end{remark}

\subsection{Equilibration and oscillation}\label{subsec:equil-osc}
The second term in \eqref{eq:res_split} involves the element residual $\mathcal R_T(\sigma_h)$. To control this contribution in the dual norm induced by $\|\cdot\|_V$ without introducing global domain constants, we eliminate the elementwise constant mode of $v$ by enforcing an elementwise mean condition.

We require the following equilibration of means:
\begin{align}
\int_T \mathcal R_T(\sigma_h)\,dx = 0 \qquad \forall\,T\in\mathbb T_h. \label{eq:mean-equil}
\end{align}
The purpose of \eqref{eq:mean-equil} is to bound the element-residual term in \eqref{eq:res_split} using only local estimates, i.e., without global Poincar\'e-type constants. Condition \eqref{eq:mean-equil} is standard in equilibrated-residual constructions and can be enforced within RT-type flux spaces; see, e.g., \cite{BraPilSch09,ErnVoh15}. 

Under \eqref{eq:mean-equil}, for each element $T$ we may subtract the element average
\begin{align*}
\displaystyle
\bar v_T := \frac{1}{|T|_d}\int_T v\,dx
\end{align*}
to obtain
\begin{align}
\displaystyle
\int_T \mathcal R_T(\sigma_h)\,v\,dx
= \int_T \mathcal R_T(\sigma_h)\,(v-\bar v_T)\,dx.
\end{align}
We then use the Poincar\'e inequality on a simplex (see, e.g., \cite{PayWei60}):
\begin{align}
\|v-\bar v_T\|_{L^2(T)} \le \frac{h_T}{\pi}\,\|\nabla v\|_{L^2(T)}, \label{eq:local-poincare}
\end{align}
Importantly, \eqref{eq:local-poincare} depends only on $h_T$ and does not invoke angle conditions.

Sharper bounds for Poincar\'e constants on simplices are available in the literature; see, for example, \cite{LaugesenSiudeja2010}. In the computations in Section~\ref{sec:numerics}, however, we keep the explicit bound used here in order to maintain a simple and reproducible certification procedure.

\subsection{A guaranteed dual-norm bound}\label{subsec:flux_bound}
We define
\begin{align}
\eta_{\mathrm{mis}}(\sigma_h)
&:= \|\sigma_h - A \nabla \tilde u_h \|_{A^{-1}},
\label{eq:eta_mis}\\
\eta_{\mathrm{osc}}(\sigma_h)
&:= \left(\sum_{T\in\mathbb T_h} \left(\frac{h_T}{\pi} \right)^2
 \|\mathcal R_T(\sigma_h)\|_{L^2(T)}^2 \right)^{\frac{1}{2}}.
\label{eq:eta_osc}
\end{align}

Here, ``mis'' stands for ``mismatch''. Thus, \(\eta_{\mathrm{mis}}\) measures the mismatch between the reconstructed certificate flux \(\sigma_h\) and the physical flux \(A\nabla\tilde u_h\) associated with the conforming state.

In this section, we use the $L^2$-data setting specified in Section~\ref{sec:problem}. In particular, the elementwise residuals and their cellwise projections are interpreted as $L^2$-functions. Thus, the local quantities \(\mathcal R_T(\sigma_h)\) appearing below are understood as \(L^2(T)\)-functions.

\begin{lemma}[Guaranteed residual bound from an equilibrated flux]\label{thm:res-majorant}
Let $\tilde u_h\in V_h$ be given and let $\sigma_h\in H(\mathrm{div};\Omega)$~satisfy the mean equilibration condition \eqref{eq:mean-equil}. Let $A(x)$ be the tensor used in  the energy norm $\|v\|_V = \left( \int_\Omega A \nabla v\cdot\nabla v dx \right)^{\frac{1}{2}}$, and assume the coercivity bound
\begin{align}
A(x)\xi\cdot\xi \geq \alpha_0|\xi|^2
\quad \text{for a.e. }x\in\Omega,\ \forall\,\xi\in\mathbb R^d, \label{eq:A0-coerc}
\end{align}
with some computable $\alpha_0>0$. Then, the dual norm of the residual satisfies
\begin{align}
\|\mathcal F(\tilde u_h)\|_{V^*}\;\leq\; \eta(\sigma_h)
:=  \eta_{\mathrm{mis}}(\sigma_h)+\alpha_0^{-1/2}\,\eta_{\mathrm{osc}}(\sigma_h). \label{eq:dual_res_bound}
\end{align}
Here, $\eta(\sigma_h)$ can be used as the certified residual bound $\mathfrak r$ required in {(C1)} of Section \ref{subsec:NK-assumptions}.
\end{lemma}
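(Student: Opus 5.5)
We start from the residual representation \eqref{eq:res_split}, valid for any $\sigma_h\in H(\mathrm{div};\Omega)$ and any $v\in V$. We bound the two terms separately by a constant times $\|v\|_V$, and then take the supremum over $v\neq 0$ in the dual norm \eqref{eq:dual-norm}. One sign remark first: \eqref{eq:res_split} contains $A\nabla\tilde u_h-\sigma_h$, while $\eta_{\mathrm{mis}}$ is defined with $\sigma_h-A\nabla\tilde u_h$. The norm $\|\cdot\|_{A^{-1}}$ is even, so this is harmless.

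\textbf{Flux-mismatch term.} Apply Lemma~\ref{lem:wCS} with $u=A\nabla\tilde u_h-\sigma_h\in L^2(\Omega)^d$ (which is legitimate because $\sigma_h\in L^2(\Omega)^d$) and with $\nabla v$ in the second slot. This gives
\begin{align*}
\left|\int_\Omega (A\nabla\tilde u_h-\sigma_h)\cdot\nabla v\,dx\right|\le \eta_{\mathrm{mis}}(\sigma_h)\,\|v\|_V .
\end{align*}

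\textbf{Element-residual term.} By the mean equilibration \eqref{eq:mean-equil}, on each $T$ we may replace $v$ by $v-\bar v_T$. We then use Cauchy--Schwarz in $L^2(T)$ and the local Poincar\'e inequality \eqref{eq:local-poincare}, obtaining
\begin{align*}
\left|\int_T \mathcal R_T(\sigma_h)\,v\,dx\right|\le \frac{h_T}{\pi}\|\mathcal R_T(\sigma_h)\|_{L^2(T)}\|\nabla v\|_{L^2(T)}.
\end{align*}
Summing over $T$ and applying the discrete Cauchy--Schwarz inequality bounds the whole sum by $\eta_{\mathrm{osc}}(\sigma_h)\,\|\nabla v\|_{L^2(\Omega)}$. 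Finally, the coercivity \eqref{eq:A0-coerc} gives $\alpha_0\|\nabla v\|_{L^2(\Omega)}^2\le\int_\Omega A\nabla v\cdot\nabla v\,dx=\|v\|_V^2$, that is, $\|\nabla v\|_{L^2(\Omega)}\le\alpha_0^{-1/2}\|v\|_V$.

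Adding the two bounds yields $|\langle\mathcal F(\tilde u_h),v\rangle|\le(\eta_{\mathrm{mis}}+\alpha_0^{-1/2}\eta_{\mathrm{osc}})\|v\|_V$. Dividing by $\|v\|_V$ and taking the supremum gives \eqref{eq:dual_res_bound}.

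The argument is short. The step needing the most care is the validity of \eqref{eq:res_split} itself. Integration by parts of $\int_\Omega\sigma_h\cdot\nabla v$ must produce no interface or boundary terms, and this holds because $\sigma_h\in H(\mathrm{div};\Omega)$ and $v\in H^1_0(\Omega)$, so Green's formula applies globally. We also need $\mathcal R_T(\sigma_h)\in L^2(T)$. This follows from $f\in L^2$, $\nabla\cdot\sigma_h\in L^2$, and $c(\cdot,\tilde u_h)\in L^2$; the last holds because $\tilde u_h$ is piecewise polynomial and hence bounded. Without these facts the Poincar\'e step is not meaningful. The applicability of \eqref{eq:local-poincare} on arbitrary (possibly anisotropic) simplices rests on their convexity, via the Payne--Weinberger result cited there.
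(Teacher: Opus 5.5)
Your proof is correct and follows the paper's proof in Appendix~\ref{Appendix=A=3} step for step. Both start from the residual representation \eqref{eq:res_split}, bound the mismatch term with the weighted Cauchy--Schwarz inequality, and control the element residuals by mean equilibration, the local Poincar\'e inequality and a discrete Cauchy--Schwarz, before converting $\|\nabla v\|_{L^2(\Omega)}\le\alpha_0^{-1/2}\|v\|_V$ and taking the supremum. One minor caveat on your side remark: boundedness of $\tilde u_h$ gives $c(\cdot,\tilde u_h)\in L^2(\Omega)$ only if $c$ is also suitably bounded in $x$ (e.g.\ $\kappa\in L^\infty$ in the Allen--Cahn example), which the paper simply builds into its $L^2$-data setting rather than deriving.
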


\begin{proof}
A proof can be found in Appendix	\ref{Appendix=A=3}.
\end{proof}

\begin{remark}[Explicit flux routes (Marini-type and CR--RT bridges)]
In low-order settings, Marini-type relations provide an inexpensive route to mixed quantities \cite{Mar85}. In three dimensions, explicit CR--RT correspondences on anisotropic meshes provide additional structure for constructing $H(\mathrm{div})$-conforming fluxes without relying on restrictive angle conditions \cite{IshKobTsu21b}.
\end{remark}

\section{Explicit equilibrated flux reconstruction (Marini-type)}\label{sec:fluxrecon}

This section explains how the certificate flux used in the residual bound is constructed in the low-order setting. The construction is based on an auxiliary CR problem and a Marini-type RT reconstruction. The aim is to obtain an explicit \(H(\mathrm{div})\)-conforming flux suitable for the residual estimate in Section~\ref{sec:flux}.

\subsection{Preliminaries}
Let $T \in \mathbb{T}_h$. For $k \in \mathbb{N}_0 := \mathbb{N} \cup \{ 0 \}$, $\mathbb{P}^k(T)$ is spanned by the restriction to $T$ of polynomials in $\mathbb{P}^k$, where  $\mathbb{P}^k$ denotes the space of polynomials with a maximum of $k$ degrees. We define the standard discontinuous finite-element space as
\begin{align*}
\displaystyle
V_{h}^{DC(0)} &:= \left\{ v_h \in {L^{\infty}(\Omega)}; \ v_h|_{T} \in \mathbb{P}^{0}({T}) \quad \forall T \in \mathbb{T}_h  \right\}. \label{dis=sp}
\end{align*}
The $L^2$-orthogonal projection onto $\mathbb{P}^0({T})$ is the linear operator ${\Pi}_{{T}}^{0}: L^1({T}) \to \mathbb{P}^{0}({T})$ defined as
\begin{align*}
\displaystyle
\int_{{T}} ({\Pi}_{{T}}^{0} {\varphi} - {\varphi})  d \hat{x} = 0 \quad \forall {\varphi} \in L^1({T}). 
\end{align*}

We define a broken (piecewise) Hilbert space as
\begin{align*}
\displaystyle
H^1(\mathbb{T}_h) &:= \left\{ \varphi \in L^2(\Omega); \ \varphi|_{T} \in H^1(T) \ \forall T \in \mathbb{T}_h  \right\}.
\end{align*}
For any $v \in H^1(\mathbb{T}_h)$, we write $\nabla_h v$ for the broken gradient, defined by $(\nabla_h v)|_T := \nabla(v|_T)$ for each element $T\in\mathbb T_h$. If $v\in H^1(\Omega)$, then $\nabla_h v=\nabla v$ a.e.\ in $\Omega$.

Let $\mathcal{F}_h^i$ be the set of interior faces and let $\mathcal{F}_h^{\partial}$ denote the set of boundary faces. We set $\mathcal{F}_h := \mathcal{F}_h^{i}\cup \mathcal{F}_h^{\partial}$. If $F \in \mathcal{F}_h^i$ with $F = \partial T_{+} \cap \partial T_{-}$, $T_{+},T_{-} \in \mathbb{T}_h$, $+ > -$, let $n_F$ be the unit normal vector from $T_{+}$ to  $ T_{-}$. We once and for all fix a local ordering of the neighboring elements and use the symbol "$+ > -$" to indicate that $T_+$ is the first element and $T_-$ the second in this ordering. Suppose that $F \in \mathcal{F}_h^i$ with {$F = \partial T_{+} \cap \partial T_{-}$}, $T_{+},T_{-} \in \mathbb{T}_h$. For such a face, we set  $v_{+} := v{|_{T_{+}}}$ and $v_{-} := v{|_{T_{-}}}$ for any ${{v}} \in H^{1}(\mathbb{T}_h)^d$. The jump in the normal component is defined as
\begin{align*}
\displaystyle
&[\![{ {v}} \cdot { {n}}]\!] := [\![ { {v}} \cdot { {n}} ]\!]_F := { {v}_{+}} \cdot {{n}_F} - {{v}_{-}} \cdot {{n}_F}.
\end{align*}
For $T \in \mathbb{T}_h$, the local Raviart--Thomas (RT) polynomial space is defined as
\begin{align*}
\displaystyle
\mathbb{RT}^0(T) := \mathbb{P}^0(T)^d + {x} \mathbb{P}^0(T), \quad {x} \in \mathbb{R}^d.
\end{align*}
The RT finite-element space is defined as follows:
\begin{align*}
\displaystyle
V^{RT}_{h} &:= \{ {v_h} \in L^1(\Omega)^d: \  {v_h}|_T \in \mathbb{RT}^0(T), \ \forall T \in \mathbb{T}_h, \  [\![ {v_h} \cdot {n} ]\!]_F = 0, \ \forall F \in \mathcal{F}_h^i \}.
\end{align*}
Let $V_h^{CR}$ denote the lowest-order CR space, and let $V_{h,0}^{CR}\subset V_h^{CR}$ be its homogeneous Dirichlet subspace defined as
\begin{align*}
\displaystyle
V_{h,0}^{CR}
:= \left \{ v_h \in V_h^{CR}: \  \int_F v_h ds=0 \ \forall F \in \mathcal F_h^{\partial} \right \}.
\end{align*}

The guaranteed residual bound in Lemma~\ref{thm:res-majorant} requires a flux $\sigma_h\in H(\mathrm{div};\Omega)$ and the elementwise mean equilibration \eqref{eq:mean-equil}. To this end, we write, for each $T\in\mathcal T_h$,
\begin{equation}
\mathcal R_T(\sigma_h) := s_h - \nabla\!\cdot\sigma_h \qquad \text{in }T,
\label{eq:RT-unified}
\end{equation}
where the (known) source term $s_h$ is defined from the computed approximation $\tilde u_h$ as
\begin{align}
s_h := c(\cdot,\tilde u_h) - f \quad  \text{(semilinear diffusion--reaction)}. \label{eq:gh-def}
\end{align}

We define $r_h\in V_h^{DC(0)}$ as
\begin{equation}
r_h|_T := \Pi_T^0(s_h)\qquad \forall\,T\in\mathcal T_h.
\label{eq:rh-def}
\end{equation}

\begin{lemma}[Divergence target implies mean equilibration]\label{lem:div-implies-mean}
Let $\sigma_h\in H(\mathrm{div};\Omega)$~satisfy
\begin{equation}
\nabla\!\cdot\sigma_h = r_h \quad \text{in each }T\in\mathcal T_h.
\label{eq:div-target}
\end{equation}
Then, \eqref{eq:mean-equil} holds.
\end{lemma}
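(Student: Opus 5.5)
The argument is a direct computation on each element, using the representation \eqref{eq:RT-unified} of the element residual together with the defining property of the projection $\Pi_T^0$.

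Fix $T\in\mathcal T_h$. The residual in \eqref{eq:RT-unified} is $\mathcal R_T(\sigma_h)=s_h-\nabla\cdot\sigma_h$, and by \eqref{eq:gh-def} this agrees with \eqref{eq:elem_residual}. By hypothesis \eqref{eq:div-target}, $\nabla\cdot\sigma_h = r_h = \Pi_T^0(s_h)$ a.e.\ in $T$. Hence
\begin{align*}
\int_T \mathcal R_T(\sigma_h)\,dx = \int_T \bigl(s_h - \Pi_T^0 s_h\bigr)\,dx .
\end{align*}
By the defining relation of $\Pi_T^0$, namely $\int_T(\Pi_T^0\varphi-\varphi)\,dx=0$ for all $\varphi\in L^1(T)$, this integral vanishes. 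Since $T$ was arbitrary, \eqref{eq:mean-equil} holds.

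The only point that needs checking is that $s_h\in L^1(T)$, so that $\Pi_T^0 s_h$ is defined and the integrals above make sense. This follows from the standing assumptions: $f\in L^2(\Omega)$ by the $L^2$-data remark, and $c(\cdot,\tilde u_h)\in L^{p'}(\Omega)\subset L^1(\Omega)$ by the assumption in Section~\ref{subsec:operator-linearization}. In addition, $\nabla\cdot\sigma_h\in L^2(\Omega)$ because $\sigma_h\in H(\mathrm{div};\Omega)$. The elementwise identity \eqref{eq:div-target} is meant in the a.e.\ sense on each $T$, which is all that is needed to substitute it under the integral. No step here is a genuine obstacle.
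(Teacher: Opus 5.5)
Your proof is correct and is the same as the paper's. Both substitute $\nabla\cdot\sigma_h = r_h = \Pi_T^0 s_h$ into $\int_T \mathcal R_T(\sigma_h)\,dx$ and use that $\Pi_T^0$ preserves element means; your check that $s_h\in L^1(T)$, so that $\Pi_T^0 s_h$ is defined, is a harmless extra detail the paper leaves implicit.
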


\begin{proof}
By \eqref{eq:RT-unified} and \eqref{eq:div-target},
\begin{align*}
\displaystyle
\int_T \mathcal R_T(\sigma_h) dx
= \int_T s_h dx - \int_T r_h dx.
\end{align*}
Because $r_h|_T=\Pi_T^0(s_h)$ preserves the element mean, the right-hand side vanishes.
\end{proof}

\subsection{Marini-type explicit construction} \label{subsec:flux:marini}

We describe an explicit low-order route to a certificate flux, based on a Marini-type correspondence between nonconforming and mixed formulations. The key point is that the construction produces \(\sigma_h\in H(\operatorname{div};\Omega)\) and enforces \eqref{eq:div-target} without solving patchwise mixed saddle-point reconstruction problems; the price is one auxiliary low-order CR problem.

This structure is useful in the anisotropic low-order setting considered here: the flux certificate is obtained from a standard scalar nonconforming problem and an explicit cellwise \(\mathbb{RT}^0\) formula. The anisotropic behavior is subsequently monitored via the flux-mismatch term within the residual bound. Section~\ref{sec5=3} provides a consistency statement for this term across anisotropic mesh families.

For higher-order conforming states, the present lowest-order CR--RT postprocessing would no longer be sufficient. One would generally need a higher-order equilibrated reconstruction, for instance in higher-order RT or Brezzi--Douglas--Marini spaces, together with a corresponding anisotropic analysis of the resulting flux-mismatch and oscillation terms. We leave this extension to future work.

In cases of genuinely variable diffusion tensors, substituting \(A\) with an elementwise constant tensor \(A_{0,T}\) can result in an additional coefficient-approximation contribution to the residual bound. Consequently, the current construction is not considered a general higher-order variable-coefficient equilibrated reconstruction.

\begin{theorem}[Marini-type certificate flux for piecewise-constant tensor $A$]\label{lem:marini-unified}
Let $A$ be symmetric, positive definite. We set $A_{0,T} := \Pi_T^0 A$, which is constant on each $T\in\mathbb T_h$. Let $\tilde u_h^{CR}\in V_{h,0}^{CR}$ be the solution of the problem
\begin{align}
\displaystyle
\sum_{T \in \mathbb{T}_h} \int_T A_{0,T} \nabla \tilde u_h^{CR}\cdot \nabla v_h dx
= \int_\Omega \ell_h\, v_h dx
\quad \forall v_h \in V_{h,0}^{CR}, \label{eq:CR-ref}
\end{align}
where
\begin{align}
\displaystyle
\ell_h|_T := -\,r_h|_T =  - \Pi_T^0 (s_h). \label{eq:ellh-def}
\end{align}
For each element $T$ with barycenter $x_T$, we define
\begin{align}
\displaystyle
\sigma_T^*(x)
:= A_{0,T}\,\nabla_h \tilde u_h^{CR}|_T + \frac{r_h|_T}{d}\,(x-x_T),
\qquad x\in T. \label{eq:sigma-star-lem}
\end{align}
We define a piecewise flux $\sigma_h \in L^1(\Omega)^d$ by prescribing, for every face  $F\subset\partial T$, the $\mathbb{RT}^0(T)$ face flux degrees of freedom
\begin{align}
\displaystyle
\int_F (\sigma_h|_T)\cdot n_{F} ds
:= \int_F \sigma_T^*\cdot n_{F} ds,
\quad \forall\,F\subset\partial T, \label{eq:rt0-dof-local}
\end{align}
and taking $\sigma_h|_T\in \mathbb{RT}^0(T)$ as the unique field with these degrees of freedom. Then, the piecewise flux $\sigma_h$ satisfies:
\begin{enumerate}
 \item For an interior face $F \in \mathcal{F}_h^i$ with $F = \partial T_{+} \cap \partial T_{-}$, $T_{+},T_{-} \in \mathbb{T}_h$,
\begin{align}
\displaystyle
\int_F [\![ \sigma_h \cdot n ]\!] ds  = \int_F (\sigma_h|_{T_+}) \cdot n_{F} ds
- \int_F (\sigma_h|_{T_-})\cdot n_{F} ds = 0. \label{sec5=7}
\end{align}
Consequently, $\sigma_h \in V_h^{RT}$, because $\sigma_h \cdot n_F \in \mathbb{P}^0(F)$ implies $ [\![ \sigma_h \cdot n ]\!] = 0$ a.e. on $F$.
 \item For each element $T \in \mathbb T_h$,
\begin{align}
\displaystyle
\nabla \cdot (\sigma_h|_T) = r_h|_T.  \label{sec5=8}
\end{align}
Consequently, $\int_T \mathcal R_T(\sigma_h) dx=0$ for all $T\in\mathbb T_h$.
\end{enumerate}

\end{theorem}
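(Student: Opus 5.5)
The plan is to reduce both claims to two elementary facts about the auxiliary CR problem \eqref{eq:CR-ref}: testing with face basis functions gives the flux continuity, and the divergence theorem on a single element gives the divergence identity. Throughout, $\nabla_h \tilde u_h^{CR}|_T$ is constant, $A_{0,T}$ is constant and $r_h|_T$ is constant. Hence $\sigma_T^*\in\mathbb{RT}^0(T)$, with constant part $A_{0,T}\nabla_h\tilde u_h^{CR}|_T - \frac{r_h|_T}{d}x_T$ plus $\frac{r_h|_T}{d}x$. Since the $\mathbb{RT}^0(T)$ face degrees of freedom are unisolvent, \eqref{eq:rt0-dof-local} forces $\sigma_h|_T=\sigma_T^*$. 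This identification makes everything explicit, and I will use it freely.

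\textbf{Item 2.} On $T$ we have $\nabla\cdot\sigma_T^* = \frac{r_h|_T}{d}\nabla\cdot(x-x_T) = r_h|_T$. Alternatively, without using the identification, the divergence theorem gives $\int_T\nabla\cdot\sigma_h\,dx=\sum_{F\subset\partial T}\int_F\sigma_T^*\cdot n\,ds=\int_T\nabla\cdot\sigma_T^*\,dx=|T|_d\,r_h|_T$. Since $\nabla\cdot(\sigma_h|_T)$ is constant for an $\mathbb{RT}^0$ field, this yields \eqref{sec5=8}. Mean equilibration then follows from Lemma~\ref{lem:div-implies-mean}.

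\textbf{Item 1.} This is the step carrying the real content: it is exactly Marini's observation. Fix an interior face $F=\partial T_+\cap\partial T_-$ and let $\psi_F\in V^{CR}_{h,0}$ be the CR basis function with $\frac{1}{|F|_{d-1}}\int_{F'}\psi_F\,ds=\delta_{FF'}$, supported on $T_+\cup T_-$. Test \eqref{eq:CR-ref} with $v_h=\psi_F$. On each $T\in\{T_+,T_-\}$, integrating by parts with the constant field $A_{0,T}\nabla\tilde u_h^{CR}$ gives
\begin{align*}
\int_T A_{0,T}\nabla\tilde u_h^{CR}\cdot\nabla\psi_F\,dx=\int_{\partial T}(A_{0,T}\nabla\tilde u_h^{CR})\cdot n_T\,\psi_F\,ds ,
\end{align*}
because the volume divergence term vanishes. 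Since the normal component is constant on each face and $\psi_F$ has face mean $\delta_{FF'}$, only $F$ contributes, giving $\int_F (A_{0,T}\nabla\tilde u_h^{CR})\cdot n_T\,ds$. On the right-hand side I need the key identity
\begin{align*}
\int_T\psi_F\,dx=\frac{|T|_d}{d+1},
\end{align*}
which holds because $\psi_F|_T$ is affine, so its mean equals its value at the barycenter, which is the average of its face-midpoint values, namely $\frac{1}{d+1}$. Then $\int_\Omega\ell_h\psi_F\,dx=-\sum_{\pm} r_h|_{T_\pm}\frac{|T_\pm|_d}{d+1}$.

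It remains to compare this with the correction term. Here $\int_F (x-x_T)\cdot n_T\,ds=|F|_{d-1}\,\mathrm{dist}(x_T,\text{plane of }F)=|F|_{d-1}\frac{h_F}{d+1}$, with $h_F$ the height of $T$ over $F$, and this equals $\frac{d|T|_d}{d+1}$ using $|T|_d=|F|_{d-1}h_F/d$. Therefore $\int_F\frac{r_h|_T}{d}(x-x_T)\cdot n_T\,ds=\frac{r_h|_T|T|_d}{d+1}$. Summing over $T_\pm$, the tested CR equation becomes exactly $\sum_{\pm}\int_F\sigma_{T_\pm}^*\cdot n_{T_\pm}\,ds=0$. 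Since $n_{T_+}=n_F=-n_{T_-}$, this is $\int_F[\![\sigma_h\cdot n]\!]\,ds=0$, i.e.\ \eqref{sec5=7}. Pointwise vanishing of the jump follows because the normal component of $\mathbb{RT}^0$ fields is constant on faces, so $\sigma_h\in V_h^{RT}\subset H(\mathrm{div};\Omega)$.

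The main obstacle is keeping the signs and geometric constants straight: the sign convention in $\ell_h=-r_h$ versus the outward normals, and the barycenter-distance identity. I would double-check the sign by the consistency requirement $\nabla\cdot\sigma=r_h$ in the divergence theorem.
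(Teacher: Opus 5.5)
Your proof is correct, and its skeleton is the paper's: test \eqref{eq:CR-ref} with the CR face basis function, integrate by parts elementwise, and match the resulting load term against the face flux of the correction $\frac{r_h|_T}{d}(x-x_T)$. The differences are in two places.

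First, you observe that $\sigma_T^*$ already lies in $\mathbb{RT}^0(T)$, so unisolvence gives $\sigma_h|_T=\sigma_T^*$ and item~2 becomes immediate. The paper never states this identification. It argues only through the face degrees of freedom and the divergence theorem (your ``alternative'' argument), although it silently uses the closed formula later in Proposition~\ref{prop:anisotropic-mismatch-consistency}.

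Second, you verify the matching identity by evaluating both sides explicitly:
\[
\int_T\psi_F\,dx=\frac{|T|_d}{d+1},\qquad \int_F(x-x_T)\cdot n_T\,ds=\frac{d\,|T|_d}{d+1}.
\]
For this you use that the barycenter is the average of the face barycenters and that the barycenter-to-face distance is $h_F/(d+1)$. The paper instead proves \eqref{eq:key-identity} by applying the divergence theorem to $\varphi_F(x-x_T)$. It uses only that $\nabla\varphi_F$ is constant and $\int_T(x-x_T)\,dx=0$, and then collapses $\partial T$ to $F$ via the face-mean property.

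The paper's identity holds for every affine function on $T$ and needs no simplex geometry. Your computation instead exposes the classical Marini constant $r_h|_T\,|T|_d/(d+1)$ on both sides, which makes the cancellation fully transparent.

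One small wording point: for $d=3$, your ``face-midpoint values'' should read face-barycenter values.
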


\begin{proof}
For an interior face $F \in \mathcal{F}_h^i$ with $F = \partial T_{+} \cap \partial T_{-}$, $T_{+},T_{-} \in \mathbb{T}_h$, let $\varphi_F\in V_{h,0}^{CR}$ be the CR  basis function associated with $F$, characterized by
\begin{itemize}
 \item  $\varphi_F$ is supported on the patch $\omega_F := T_+ \cup T_-$,
 \item $\varphi_F$ is affine on each of $T_\pm$,
 \item the face  means satisfy
\begin{align*}
\displaystyle
\frac{1}{|F|_{d-1}}\int_F \varphi_F ds = 1, \quad \frac{1}{|F^{\prime}|_{d-1}}\int_{F^{\prime}} \varphi_F ds = 0 \quad \text{for any other face $F^{\prime} \neq F$ of $T_\pm$},
\end{align*}
and $\int_F \varphi_F ds = 0$ on $F \in \mathcal{F}_h^{\partial}$. 
\end{itemize}
Because $A_{0,T}$ is constant on each element $T$ and $\tilde u_h^{CR}$ is affine on each element, the vector field $A_{0,T} \nabla_h \tilde u_h^{CR}|_T$ is constant on $T$. Thus,
\begin{align*}
\displaystyle
\nabla  \cdot \left (A_{0,T}\nabla_h \tilde u_h^{CR}|_T \right) =0
\quad \text{in each $T \in \mathbb T_h$}.
\end{align*}
Furthermore, on any face $F \subset \partial T$, the scalar $(A_{0,T}\nabla_h \tilde u_h^{CR}|_T) \cdot n_{F}$ is constant on $F$.

Substituting $\varphi_F$ for $v_h$ in \eqref{eq:CR-ref},
\begin{align}
\displaystyle
\int_{T_+} A_{0,T_+} \nabla \tilde u_h^{CR}|_{T_+} \cdot \nabla_h \varphi_F dx + \int_{T_-} A_{0,T_-} \nabla \tilde u_h^{CR}|_{T_-} \cdot \nabla_h \varphi_F dx
= - \int_{T_+ \cup T_-} r_h  \varphi_F dx. \label{eq:test-CR}
\end{align}
On each of $T_\pm$,  by integrating by parts and $\nabla\!\cdot(A_{0,T}\nabla_h \tilde u_h^{CR}|_T)=0$,
\begin{align}
\displaystyle
\left[ \left(A_{0,T_+}\nabla_h \tilde u_h^{CR}|_{T_+}\cdot n_{F}\right)
-\left(A_{0,T_-}\nabla_h \tilde u_h^{CR}|_{T_-}\cdot n_{F}\right) \right] \int_F \varphi_F\,ds
=
-\int_{T_+ \cup T_-} r_h \varphi_F dx. \label{eq:flux-balance-A0}
\end{align}
Let $T\in\{T_+,T_-\}$. Because $r_h|_T$ is constant and $\varphi_F|_T$ is affine, it holds that
\begin{align}
\displaystyle
\int_T r_h \varphi_F dx
=
\frac{r_h|_T}{d} \int_{\partial T} \varphi_F (x-x_T)\cdot n_T ds, \label{eq:key-identity}
\end{align}
where $n_T$ denotes the unit outward normal on $\partial T$. Indeed, using $\nabla\!\cdot(x-x_T)=d$ and the product rule: $\nabla\!\cdot(\varphi_F(x-x_T))=(x-x_T)\cdot\nabla\varphi_F + d\,\varphi_F$,
\begin{align*}
\displaystyle
\int_T r_h \varphi_F dx
&=\frac{r_h|_T}{d}\int_T \varphi_F \nabla \cdot(x-x_T) dx \\
&=\frac{r_h|_T}{d}\int_T \nabla \cdot\bigl(\varphi_F(x-x_T)\bigr) dx
-\frac{r_h|_T}{d}\int_T (x-x_T)\cdot \nabla \varphi_F dx.
\end{align*}
The last term vanishes because $\nabla\varphi_F$ is constant on $T$ and $\int_T (x-x_T)\,dx=0$ by the definition of the barycenter $x_T$. Therefore, \eqref{eq:key-identity} follows by the divergence theorem.

It holds $\int_{F^{\prime}} \varphi_{F^{\prime}} ds = 0$ for any other face $F^{\prime} \neq F$ of $T$, and $(x-x_T) \cdot n_{T,F^*}$ is constant on each face $F^* \subset \partial T$, where the normal is constant, and we write it as $n_{T,F}$. Therefore,
\begin{align*}
\displaystyle
\int_{\partial T} \varphi_F (x-x_T)\cdot n_T ds
=
\int_{F} \varphi_F (x-x_T)\cdot n_{T,F}ds
=
\bigl((x_F-x_T)\cdot n_{T,F}\bigr) \int_F \varphi_F ds,
\end{align*}
where $x_F$ denotes the barycenter of $F$. Thus,
\begin{align}
\displaystyle
\int_T r_h \varphi_F dx
=
\frac{r_h|_T}{d} \left((x_F-x_T)\cdot n_{T,F}\right) \int_F \varphi_F ds. \label{eq:vol-to-face}
\end{align}
Using \eqref{eq:vol-to-face} for $T=T_+$ and $T=T_-$, we have
\begin{align}
\displaystyle
\int_{T_+ \cup T_-} r_h \varphi_F dx
&= \left[ \frac{r_h|_{T_+}}{d} \left((x_F-x_{T_+})\cdot n_{T_{+},F}\right) + \frac{r_h|_{T_-}}{d} \left((x_F-x_{T_-})\cdot n_{T_-,F}\right)  \right] \int_F \varphi_F ds. \label{sec5=13}
\end{align}
Recalling the definition \eqref{eq:sigma-star-lem}:
\begin{align*}
\displaystyle
\sigma_T^*(x)=A_{0,T}\nabla_h \tilde u_h^{CR}|_T+\frac{r_h|_T}{d}(x-x_T),
\end{align*}
and using \eqref{eq:rt0-dof-local}, \eqref{eq:flux-balance-A0}, \eqref{sec5=13} and $\int_F \varphi_F ds = |F|_{d-1}$, we have
\begin{align*}
\displaystyle
\int_F [\![ \sigma_h \cdot n ]\!] ds  
&= \int_F (\sigma_h|_{T_+}) \cdot n_{T_+,F} ds
+ \int_F (\sigma_h|_{T_-})\cdot n_{T_-,F} ds \\
&= \int_F \sigma_{T_+}^* \cdot n_{T_+,F} ds
+ \int_F \sigma_{T_-}^*\cdot n_{T_-,F} ds \\
&= \left[ \left(A_{0,T_+}\nabla_h \tilde u_h^{CR}|_{T_+}\cdot n_{F}\right)
-\left(A_{0,T_-}\nabla_h \tilde u_h^{CR}|_{T_-}\cdot n_{F}\right) \right] \int_F \varphi_F ds \\
&\quad + \int_F \left[ \frac{r_h|_{T_+}}{d} \left((x-x_{T_+})\cdot n_{T_{+},F}\right) + \frac{r_h|_{T_-}}{d} \left((x-x_{T_-})\cdot n_{T_-,F}\right)  \right]  ds \\
&= \left[ \left(A_{0,T_+}\nabla_h \tilde u_h^{CR}|_{T_+}\cdot n_{F}\right)
-\left(A_{0,T_-}\nabla_h \tilde u_h^{CR}|_{T_-}\cdot n_{F}\right) \right] \int_F \varphi_F ds \\
&\quad + \int_{T_+ \cup T_-} r_h \varphi_F dx \\
&= 0.
\end{align*}
In particular, the normal component is single-valued across interior faces and therefore $\sigma_h\in H(\mathrm{div};\Omega)$, because $\sigma_h|_T\in \mathbb{RT}^0(T)$ by construction, we conclude $\sigma_h \in V_h^{RT}$.

Let $T \in \mathbb T_h$. By construction, because $\sigma_h|_T\in \mathbb{RT}^0(T)$, $\nabla\!\cdot(\sigma_h|_T)$ is constant on $T$. Using the divergence theorem and the definition of the face flux degrees of freedom,
\begin{align*}
\displaystyle
\int_T \nabla \cdot(\sigma_h|_T) dx
=\int_{\partial T} (\sigma_h|_T)\cdot n_T ds
=\int_{\partial T} \sigma_T^*\cdot n_T ds
=\int_T \nabla \cdot \sigma_T^* dx.
\end{align*}
Because $A_{0,T}\nabla_h \tilde u_h^{CR}|_T$ is constant, its divergence vanishes, and $\nabla \cdot(x-x_T)=d$, we have
\begin{align*}
\displaystyle
\nabla\!\cdot\sigma_T^*
=\nabla \cdot \left(A_{0,T}\nabla_h \tilde u_h^{CR}|_T \right)
+\nabla \cdot \left(\frac{r_h|_T}{d}(x-x_T) \right)
= r_h|_T \quad \text{in $T$},
\end{align*}
which leads to
\begin{align*}
\displaystyle
\int_T \nabla \cdot(\sigma_h|_T) dx = r_h|_T |T|_d.
\end{align*}
Because $\nabla\!\cdot(\sigma_h|_T)$ is constant on $T$, this implies $\nabla \cdot(\sigma_h|_T)=r_h|_T$ pointwise on $T$.	
\end{proof}

\begin{remark}
The additional solve \eqref{eq:CR-ref} is low-order, linear, and introduced solely to produce an explicit $H(\mathrm{div};\Omega)$ certificate flux for the tensor $A$. We emphasize that our subsequent certification arguments require only the two outcomes $\sigma_h \in H(\mathrm{div};\Omega)$ and $\nabla \cdot \sigma_h=r_h$ (elementwise), and do not rely on a full equivalence between nonconforming and mixed formulations.
\end{remark}

\color{black}
\subsection{A consistency statement on anisotropic meshes} \label{sec5=3}
The preceding construction gives the certificate flux used in the dual residual bound. The first term in this bound is the flux-mismatch term, defined by
\begin{align*}
  \eta_{\rm mis}(\sigma_h)
  :=
  \|\sigma_h-\nabla\tilde u_h\|_{L^2(\Omega)^d}.
\end{align*}
We record when this term is consistent along anisotropic mesh families. The following result is restricted to the low-order Poisson setting. It is not an anisotropic efficiency theorem for general equilibrated estimators; instead, it separates the algebraic CR--RT reconstruction from the finite element energy-error estimate for the underlying approximation.

\begin{proposition}[Consistency of the flux-mismatch term on anisotropic meshes]
\label{prop:anisotropic-mismatch-consistency}
Assume that $A=I$ and consider the Poisson problem
\begin{align*}
\displaystyle
  -\varDelta u=f\quad\text{in }\Omega,\quad u=0\quad\text{on }\partial\Omega.
\end{align*}
Let $\tilde u_h\in V_h$ be a conforming finite element approximation of $u$. Let $u_h^{CR}\in V_{h,0}^{CR}$ be the lowest-order CR approximation with the elementwise constant right-hand side $f_h:=\Pi_h^0 f$, and let $\sigma_h \in V_h^{RT}$ be the Marini-type reconstructed flux
\begin{align*}
\displaystyle
\sigma_h|_T
=
\nabla_h u_h^{CR}|_T
-
\frac{f_h|_T}{d}(x-x_T),
\quad T \in \mathbb T_h,
\end{align*}
where $x_T$ denotes the barycenter of $T$. Then,
\begin{align*}
\displaystyle
\|\sigma_h-\nabla \tilde u_h\|_{L^2(\Omega)^d}
\leq
\|\nabla_h u_h^{CR}-\nabla u\|_{L^2(\Omega)^d}
+
\|\nabla(u-\tilde u_h)\|_{L^2(\Omega)^d}
+
h\|f_h\|_{L^2(\Omega)}.
\end{align*}
Consequently, suppose that the anisotropic mesh family and the exact solution are such that
\begin{align*}
\displaystyle
\|\nabla_h u_h^{CR}-\nabla u\|_{L^2(\Omega)^d} \to 0, \quad \|\nabla(u-\tilde u_h)\|_{L^2(\Omega)^d} \to 0,
\end{align*}
Then,
\begin{align*}
\displaystyle
\|\sigma_h-\nabla \tilde u_h\|_{L^2(\Omega)^d}\to0.
\end{align*}
In particular, the low-order CR--RT reconstruction is consistent with the flux-mismatch term whenever the underlying CR and conforming approximations are energy-consistent on the anisotropic mesh family under consideration.
\end{proposition}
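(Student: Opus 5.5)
The plan is a triangle inequality, splitting the mismatch through the exact gradient and the broken CR gradient. On each $T$ I write
\begin{align*}
\sigma_h-\nabla\tilde u_h
=\bigl(\nabla_h u_h^{CR}-\nabla u\bigr)+\bigl(\nabla u-\nabla\tilde u_h\bigr)-\frac{f_h|_T}{d}(x-x_T).
\end{align*}
Taking $L^2(\Omega)^d$ norms and using Minkowski's inequality, the first two pieces give exactly the first two terms of the claimed bound. The statement's formula for $\sigma_h$ is the specialization of \eqref{eq:sigma-star-lem} with $A_{0,T}=I$ and $r_h=-f_h$ (for $c\equiv0$). I take it as given, exactly as stated in the Proposition, i.e. as the cellwise field itself rather than its $\mathbb{RT}^0$ interpolant. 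This is consistent: for $A_{0,T}$ constant, $\sigma_T^*$ already lies in $\mathbb{RT}^0(T)$, so \eqref{eq:rt0-dof-local} reproduces it. Theorem~\ref{lem:marini-unified} then guarantees $\sigma_h\in V_h^{RT}$, although membership in $H(\mathrm{div})$ is not even needed for this inequality.

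What remains is the Marini correction term,
\begin{align*}
\Bigl\|\frac{f_h}{d}(x-x_T)\Bigr\|_{L^2(\Omega)^d}^2=\sum_{T\in\mathbb T_h}\frac{|f_h|_T|^2}{d^2}\int_T|x-x_T|^2\,dx .
\end{align*}
Since $x_T\in T$, we have $|x-x_T|\le h_T\le h$ for all $x\in T$. Hence the $T$-contribution is at most $d^{-2}h_T^2\|f_h\|_{L^2(T)}^2$. Summing over $T$ and using $d\ge 2>1$ gives the bound $h\|f_h\|_{L^2(\Omega)}$; in fact we get the sharper $(h/d)\|f_h\|_{L^2(\Omega)}$, and $h$ can be replaced by $h_T$ elementwise. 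No shape-regularity enters, only the diameter, which is why the estimate is robust on anisotropic meshes.

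For the consequence, I note that $\|f_h\|_{L^2(\Omega)}=\|\Pi_h^0 f\|_{L^2(\Omega)}\le\|f\|_{L^2(\Omega)}$ by $L^2$-stability of the orthogonal projection, so the third term is $O(h)\to0$ provided $h\to0$ along the family. I would state this explicitly as part of the assumed refinement, since energy consistency alone does not literally force $h\to0$. The other two terms vanish by hypothesis.

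There is no real obstacle. The only points requiring care are the pointwise bound $|x-x_T|\le h_T$, which holds because the barycenter lies in the convex set $T$, and the sign convention in $\sigma_h$, which is irrelevant for the norm.
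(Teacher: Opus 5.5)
Your proposal is correct and matches the paper's proof essentially step for step: the triangle inequality through $\nabla u$, the elementwise bound $|x-x_T|\le h_T\le h$ on the Marini correction term, and $L^2$-stability of $\Pi_h^0$ for the limit. Your remark that $h\to0$ should be stated as part of the assumed refinement is a fair sharpening. The paper's proof also uses $h\to0$ ("as $h\to0$") without listing it among the hypotheses of the proposition.
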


\begin{proof}
From the Marini-type formula and the triangle inequality,
\begin{align*}
\displaystyle
\|\sigma_h-\nabla \tilde u_h\|_{L^2(\Omega)^d}
&\leq
\|\nabla_h u_h^{CR}-\nabla u\|_{L^2(\Omega)^d}
+
\|\nabla(u-\tilde u_h)\|_{L^2(\Omega)^d}
+
\left(
\sum_{T\in\mathbb T_h}
\left\|
\frac{f_h|_T}{d}(x-x_T)
\right\|_{L^2(T)^d}^2
\right)^{1/2}.
\end{align*}
Because $|x-x_T| \leq h_T \leq h$ on each element $T$, we have
\begin{align*}
\displaystyle
\left\|
\frac{f_h|_T}{d}(x-x_T)
\right\|_{L^2(T)^d}
\leq
\frac{h_T}{d}\|f_h\|_{L^2(T)}
\leq
h \|f_h\|_{L^2(T)}.
\end{align*}
Summing over all elements gives
\begin{align*}
\displaystyle
\left(
\sum_{T\in\mathbb T_h}
\left\|
\frac{f_h|_T}{d}(x-x_T)
\right\|_{L^2(T)^d}^2
\right)^{1/2}
\leq h \|f_h\|_{L^2(\Omega)}.
\end{align*}
Furthermore, because $f_h = \Pi_h^0 f$ and $\Pi_h^0$ is $L^2$-stable,
\begin{align*}
\displaystyle
h\|f_h\|_{L^2(\Omega)}
\leq h \|f \|_{L^2(\Omega)}
\to 0
\quad \text{as $h\to0$}
\end{align*}
whenever $f\in L^2(\Omega)$. Therefore, under the assumed energy consistency of the CR approximation and the conforming state, the flux-mismatch term satisfies
\begin{align*}
\displaystyle
\|\sigma_h-\nabla \widetilde u_h \|_{L^2(\Omega)^d}\to 0.
\end{align*}
\end{proof}

\color{black}
\begin{remark} \label{rem:conditional-anisotropic-consistency}
Proposition~\ref{prop:anisotropic-mismatch-consistency} is conditional and does not independently establish an anisotropic energy-error estimate for either the CR approximation or the conforming approximation. Instead, it demonstrates that when these two approximations achieve energy consistency on a specified anisotropic mesh family, the CR--RT reconstruction does not introduce additional complications in the flux-mismatch term. The assumptions outlined in Proposition~\ref{prop:anisotropic-mismatch-consistency} are substantive. In the context of the Poisson problem, the energy convergence of the CR approximation can be demonstrated on anisotropic mesh families that meet certain semi-regular geometric conditions. Similarly, for conforming Lagrange approximations, energy convergence is achieved under suitable anisotropic interpolation estimates and compatibility conditions concerning the mesh family and the exact solution. These considerations are part of anisotropic finite element approximation theory; the algebraic CR--RT reconstruction itself is kept separate; refer to \cite{IshizakaKobayashiTsuchiya2023,IshizakaPhD2022} for further details.

\end{remark}

\section{Certifying (C2): a computable stability constant for the linearization} \label{sec:stability}

This section treats the stability estimate required for the second verification condition (C2). In the coercive setting considered in this paper, the linearized operator is bounded from below in the energy norm, which gives a computable lower bound for the stability constant entering the NK criterion.

\subsection{Stability constant required by NK}\label{subsec:C2-stability}
Let $\tilde u_h\in V_h$ be the computed approximation. Recall that
\begin{align*}
\displaystyle
\mathcal L_{\tilde u_h} = D\mathcal F(\tilde u_h)\in \mathcal L(V,V^*).
\end{align*}
We denote by
\begin{align}
\displaystyle
B_{\tilde u_h}(\delta u,v) := \langle \mathcal L_{\tilde u_h} \delta u,  v\rangle \quad \forall \delta u,v\in V \label{eq:lin-bilinear}
\end{align}
the bilinear form induced by the linearization. In the diffusion-reaction context addressed herein, $B_{\tilde u_h}$ is typically {symmetric}. Consequently, we focus on the symmetric elliptic case and establish (C2) through a coercivity constant. Although more general (nonsymmetric) linearizations can be addressed using an inf-sup theory, this extension is not pursued in the current paper.

\noindent\textbf{Coercivity constant.}
Let $\alpha>0$ be a computable number such that
\begin{align}
\displaystyle
B_{\tilde u_h}(v,v) \geq \alpha \|v\|_V^2 \quad \forall v\in V. \label{eq:C2-coerc}
\end{align}
Because $B_{\tilde u_h}$ is symmetric, \eqref{eq:C2-coerc} is equivalent to
\begin{align}
\displaystyle
\alpha := \inf_{v\in V\setminus \{0\}}\frac{B_{\tilde u_h}(v,v)}{\|v\|_V^2} >0 . \label{eq:alpha-def}
\end{align}
From the Lax--Milgram lemma, \eqref{eq:C2-coerc} implies that $\mathcal L_{\tilde u_h} :V\to V^*$ is bijective and
\begin{align}
\displaystyle
\|\mathcal L_{\tilde u_h}^{-1}\|_{\mathcal L(V^*,V)} \leq \alpha^{-1}, \label{eq:C2-invbound}
\end{align}
which is exactly the stability ingredient required in condition (C2).

\subsection{A monotone (coercive) bound}\label{subsec:monotone-bound}
We consider inexpensive coercivity bounds that apply in monotone settings.

For the operator $\mathcal F$ defined in Section \ref{subsec:operator-linearization}, one has
\begin{align}
\displaystyle
B_{\tilde u_h}(v,v)
=  \int_\Omega A\nabla v\cdot\nabla v dx +\int_\Omega (\partial_s c) (x,\tilde u_h) v^2 dx,\label{eq:semilinear-Bvv}
\end{align}
where $\partial_s c(x,w) = b(x,w) + w \partial_s b(x,w)$. Here, $\partial_s$ denotes the derivative with respect to the scalar argument $s$.

If the monotonicity condition $(\partial_s c) (x,\tilde u_h) \geq  0$ a.e. in $\Omega$, then
\begin{align*}
\displaystyle
B_{\tilde u_h}(v,v) \geq \int_\Omega A\nabla v\cdot\nabla v dx .
\end{align*}
In particular, if the energy norm is taken as $\displaystyle \|v\|_V = \left( \int_\Omega A \nabla v\cdot\nabla v dx \right)^{\frac{1}{2}}$, then \eqref{eq:C2-coerc} holds with
\begin{align}
\displaystyle
\alpha \geq 1. \label{eq:alpha-ge-1}
\end{align}

\begin{remark}
When \((\partial_s c)(x,\tilde u_h)\) changes sign, one may still attempt a purely analytic lower bound by estimating the negative part of \(\int_\Omega (\partial_s c)v^2\) in terms of \(\|\nabla v\|_{L^2(\Omega)}^2\) via a Poincar\'e inequality. Such bounds are domain-dependent and can be very conservative. Furthermore, they do not cover genuinely non-coercive or nearly singular linearizations. These cases require a separate invertibility verification and are outside the scope of the present paper.

\end{remark}

\section{Certifying (C3): Lipschitz bound for the linearization}\label{sec:C3}

This section derives the Lipschitz bound required for the nonlinear remainder in the NK argument. The estimates are stated in terms of computable scalar quantities so that the final verification conditions reduce to the evaluation of the residual size, the stability constant, and the nonlinear Lipschitz bound.

\subsection{What (C3) requires}\label{subsec:C3-require}
Section~\ref{sec:NK} reduces the verification step to a small number of scalar inequalities. Besides the residual bound and a certified stability bound for the linearization, we still need a computable Lipschitz bound for the derivative $D \mathcal F$ on the verification ball (condition (C3)), and bounds controlling the variation of a quantity of interest $\mathcal J(u)$ on that ball.  

Recall the closed ball $B_{\rho} \subset\mathcal U$ from
Sections \ref{subsec:U-and-balls} and \ref{subsec:NK-assumptions}, and
\begin{align*}
\displaystyle
\mathcal L_w = D\mathcal F(w)\in \mathcal L(V,V^*) \quad \forall w\in\mathcal U.
\end{align*}
Condition {(C3)} in Theorem~\ref{thm:NK} requires a computable function
$L:(0,\infty)\to(0,\infty)$ such that, for any $\rho>0$,
\begin{align} \label{eq:C3-Lip}
\|\mathcal L_w-\mathcal L_z\|_{\mathcal L(V,V^*)}
\le L(\rho)\,\|w-z\|_{V}
\qquad \forall\,w,z\in B_{\rho}.
\end{align}
This section provides inexpensive, fully computable bounds for $L(\rho)$. We present a semilinear route that is sufficient for the main body. 

\subsection{Embedding constants} \label{subsec:embed}

Assume that there exists a positive constant $C_p$ satisfying the norm bound
\begin{align}
\displaystyle
\|v\|_{L^p(\Omega)} &\le C_p \|v\|_V \quad \forall v \in V. \label{eq:CP}
\end{align}

Concrete choices of such constants used in the numerical experiments are specified in Section~\ref{sec:numerics}.

In applications, we treat $C_p$ as a computable input. If there exists a computable $\alpha_0 > 0$ such that
\begin{align*}
\displaystyle
A (x)\xi\cdot\xi\ge \alpha_0|\xi|^2 \quad \text{for a.e.\ }x\in\Omega,\ \forall\,\xi\in\mathbb R^d,
\end{align*}
then
\begin{align*}
\displaystyle
\| v \|_V^2 = \int_\Omega A \nabla v\cdot\nabla v dx \geq \alpha_0 \| \nabla v \|^2_{L^2(\Omega)},
\end{align*}
which leads to
\begin{align*}
\displaystyle
 \| \nabla v \|_{L^2(\Omega)} \leq \alpha_0^{- \frac{1}{2}} \| v \|_V.
\end{align*}
From the Sobolev's embedding theorem $H_0^1(\Omega) \hookrightarrow L^p(\Omega)$, there exists a positive constant $C_{s,p}$ such that 
\begin{align*}
\displaystyle
\| v \|_{L^p(\Omega)} \leq C_{s,p} \| \nabla v \|_{L^2(\Omega)} \quad \forall v \in H_0^1(\Omega),
\end{align*}
with
\begin{align*}
\displaystyle
p \in [2,\infty) \quad \text{if $d =2$}, \quad p \in [2,6] \quad \text{if $d =3$}.
\end{align*}
Therefore, 
\begin{align*}
\displaystyle
\| v \|_{L^p(\Omega)} \leq C_{s,p} \alpha_0^{- \frac{1}{2}} \| v \|_V.
\end{align*}

\begin{remark}
Note that the scalar checks require numerical values of the embedding constants.  Here, the phrase ``scalar checks'' refers to the finite numerical inequalities in the NK criterion, such as \(p(\rho) \leq 0\) and \(q(\rho) < 1\), after the residual bound, stability lower bound, and Lipschitz bound have been computed.

These checks require numerical values of the embedding constants (e.g., \(C_6\)). We do not need sharp constants: any guaranteed upper bound is sufficient, and these constants are treated as domain-dependent inputs that can be precomputed once for a given \(\Omega\).
\end{remark}

\begin{remark}[Computable Poincar\'e and Sobolev embedding constants] \label{rem=embed}
For $d=3$, extending v by zero to $\mathbb{R}^3$ yields the critical Sobolev inequality with the sharp Aubin--Talenti constant \cite{Aub76,Tal76}. For $d=2$ and fixed $p<\infty$, verified enclosures of the optimal constants on bounded convex domains are available; see \cite{TanSekMizOis17} and references therein.
\end{remark}

\subsection{Semilinear models: a general bound via coefficient derivatives}\label{subsec:C3-semi-general}
We consider the semilinear diffusion--reaction setting
\begin{align*}
\displaystyle
\langle \mathcal F(u),v\rangle
= \int_\Omega A\nabla u\cdot\nabla v dx +\int_\Omega c(x,u) v dx - \int_\Omega f v dx
\quad \forall v\in V,
\end{align*}
where $c(\cdot,\cdot)$ is the reaction term. Then, for any $w\in\mathcal U$ and any $\delta u,v\in V$,
\begin{align}\label{eq:C3-semi-lin}
\langle \mathcal L_w \delta u, v\rangle
= \int_\Omega A\nabla \delta u\cdot\nabla v dx
+ \int_\Omega \partial_s c(x,w)\,\delta u\,v dx.
\end{align}

\begin{lemma}[Semilinear (C3) bound]\label{lem:C3-semi}
Let $p\in[2,\infty)$ be such that the embedding $V\hookrightarrow L^p(\Omega)$ holds with a
known constant $C_p>0$ in \eqref{eq:CP}. Let $w,z\in\mathcal U$. Then,
\begin{align}\label{eq:C3-semi-op}
\|\mathcal L_w-\mathcal L_z\|_{\mathcal L(V,V^*)}
\leq C_p^2 \|\partial_s c(\cdot,w)-\partial_s c(\cdot,z)\|_{L^q(\Omega)},
\end{align}
where $q:=\frac{p}{p-2}\in[1,\infty]$ (with the convention $q=\infty$ when $p=2$). In particular, if there exists a computable constant $\Gamma(\rho)\ge0$ such that
\begin{align}\label{eq:C3-semi-gamma}
\|\partial_s c(\cdot,w)-\partial_s c(\cdot,z)\|_{L^q(\Omega)}
\leq \Gamma(\rho) \|w-z\|_V
\quad \forall\,w,z\in B_{\rho},
\end{align}
then \eqref{eq:C3} holds with $L(\rho)=C_p^2\,\Gamma(\rho)$.
\end{lemma}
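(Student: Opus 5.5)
The plan is to bound the operator norm directly through its bilinear-form characterization \eqref{eq:op-norm}. Using \eqref{eq:C3-semi-lin}, the diffusion parts of $\mathcal L_w$ and $\mathcal L_z$ cancel, since $A$ does not depend on the state. For all $\delta u, v\in V$ this leaves
\begin{align*}
\langle (\mathcal L_w-\mathcal L_z)\delta u, v\rangle
= \int_\Omega \bigl(\partial_s c(x,w)-\partial_s c(x,z)\bigr)\,\delta u\, v\,dx .
\end{align*}
The whole task therefore reduces to bounding this single reaction integral by $\|\delta u\|_V\,\|v\|_V$.

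For that integral I will use the generalized (three-factor) H\"older inequality with exponents $q, p, p$. These satisfy $\frac1q+\frac1p+\frac1p=\frac{p-2}{p}+\frac{2}{p}=1$, and for $p=2$ they reduce to $(\infty,2,2)$. This gives
\begin{align*}
\left|\int_\Omega g\,\delta u\,v\,dx\right|\le \|g\|_{L^q(\Omega)}\|\delta u\|_{L^p(\Omega)}\|v\|_{L^p(\Omega)}, \qquad g:=\partial_s c(\cdot,w)-\partial_s c(\cdot,z).
\end{align*}
The standing assumption in Section~\ref{subsec:operator-linearization} ensures $\partial_s c(\cdot,w)\in L^{p/(p-2)}(\Omega)$ for $w\in\mathcal U$. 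Hence $g\in L^q(\Omega)$ and the right-hand side is finite.

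Next I apply the embedding \eqref{eq:CP} to each of the two $L^p$ factors, which gives $\|\delta u\|_{L^p}\|v\|_{L^p}\le C_p^2\|\delta u\|_V\|v\|_V$. Dividing by $\|\delta u\|_V\|v\|_V$ and taking the supremum over nonzero $\delta u, v$ in \eqref{eq:op-norm} yields \eqref{eq:C3-semi-op}. For the second claim, I restrict to $w,z\in B_\rho$ and insert \eqref{eq:C3-semi-gamma} into \eqref{eq:C3-semi-op}. This gives $\|\mathcal L_w-\mathcal L_z\|\le C_p^2\Gamma(\rho)\|w-z\|_V$, which is exactly \eqref{eq:C3} with $L(\rho)=C_p^2\Gamma(\rho)$.

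No step is genuinely hard. The only points needing care are, first, checking the H\"older exponent bookkeeping, including the endpoint $p=2$, $q=\infty$. Second, I must confirm that $g$ lies in $L^q$ so that the pairing is well defined. The assumptions on $\mathcal U$ already stated in Section~\ref{subsec:operator-linearization} cover this second point.
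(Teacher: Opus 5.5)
Your proposal is correct and follows essentially the same route as the paper's proof. In both, the state-independent diffusion parts cancel, the three-factor H\"older inequality with exponents $(q,p,p)$ is applied, the embedding \eqref{eq:CP} bounds each $L^p$ factor, the supremum is taken in \eqref{eq:op-norm}, and \eqref{eq:C3-semi-gamma} is then inserted to obtain $L(\rho)=C_p^2\Gamma(\rho)$.
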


\begin{proof}
The diffusion part cancels in $\mathcal L_w-\mathcal L_z$. Therefore, for any $\delta u,v\in V$,
\begin{align*}
\displaystyle
\langle (\mathcal L_w-\mathcal L_z)\delta u,v\rangle
= \int_\Omega (\partial_s c(\cdot,w)-\partial_s c(\cdot,z))\,\delta u\,v dx.
\end{align*}
The H\"older inequality with exponents $(q,p,p)$ and \eqref{eq:CP} gives
\begin{align*}
\displaystyle
|\langle (\mathcal L_w-\mathcal L_z)\delta u,v\rangle|
&\leq \|\partial_s c(\cdot,w)-\partial_s c(\cdot,z)\|_{L^q(\Omega)} \|\delta u\|_{L^p(\Omega)} \|v\|_{L^p(\Omega)} \\
&\leq C_p^2 \|\partial_s c(\cdot,w)-\partial_s c(\cdot,z)\|_{L^q(\Omega)} \|\delta u\|_V \|v\|_V.
\end{align*}
Taking the supremum over $\|\delta u\|_V=\|v\|_V=1$ yields \eqref{eq:C3-semi-op}. The last statement follows immediately from \eqref{eq:C3-semi-gamma}.
\end{proof}

\begin{example}[Allen--Cahn type reaction] \label{ex:AC-reaction}
We consider
\begin{align}\label{eq:AC-reaction}
c(x,u)=\kappa(x)\,(u^3-u), \quad \kappa \in L^\infty(\Omega),\ \kappa\geq 0 \ \text{for a.e. $x \in \Omega$}.
\end{align}
Then, $\partial_s c(x,u)=\kappa(x)\,(3u^2-1)$. Choose $p=4$ (hence $q=2$) and assume \eqref{eq:CP} holds with $C_4$.

For any $w,z\in B_{\rho}$,
\begin{align*}
\displaystyle
\partial_s c(x,w)-\partial_s c(x,z)=3\kappa(x) (w+z) (w-z),
\end{align*}
the H\"older inequality and \eqref{eq:CP} yield
\begin{align*}
\displaystyle
\|\partial_s c(\cdot,w)-\partial_s c(\cdot,z)\|_{L^2(\Omega)}
&\leq 3\|\kappa\|_{L^\infty(\Omega)} \|w+z\|_{L^4(\Omega)} \|w-z\|_{L^4(\Omega)} \\
&\leq 3 C_4^2 \|\kappa\|_{L^\infty(\Omega)} \|w+z\|_{V} \|w-z\|_{V} \\
&\leq 6 C_4^2 \|\kappa\|_{L^\infty(\Omega)} (\|\tilde u_h\|_V+\rho) \|w-z\|_{V}.
\end{align*}
where we used the fact that
\begin{align*}
\displaystyle
\|w\|_V \leq \|\tilde u_h\|_V + \rho,\quad \|z\|_V \leq \|\tilde u_h\|_V+\rho.
\end{align*}
Therefore, \eqref{eq:C3-semi-gamma} holds with
\begin{align*}
\displaystyle
\Gamma(\rho)=6 C_4^2 \|\kappa\|_{L^\infty(\Omega)}(\|\tilde u_h\|_V+\rho),
\end{align*}
which leads to
\begin{align*}
\displaystyle
\|\mathcal L_w-\mathcal L_z\|_{\mathcal L(V,V^*)}
\leq L_{\mathrm{ac}} \|w-z\|_V, \quad L_{\mathrm{ac}} := C_p^2 \Gamma(\rho).
\end{align*}
Therefore, the Lipschitz condition \eqref{eq:C3-Lip} holds with the computable choice $L(\rho)=L_{\mathrm{ac}}$.
\end{example}

\section{Verified outputs: guaranteed enclosures for quantities of interest}\label{sec:outputs}

Assume that the NK verification in Theorem \ref{thm:NK} succeeds for some radius \(\rho>0\). Therefore, there exists a (locally unique) exact solution \(u\in B_\rho\subset\mathcal U\). Let \(\mathcal J:\mathcal U\to\mathbb R\) be a volume-type quantity of interest whose derivative admits an \(L^2(\Omega)\)-density; see Section~\ref{subsec:qoi}. This section provides certified enclosures for \(\mathcal J(u)\).

\subsection{A baseline enclosure from a uniform derivative bound}\label{subsec:qoi:baseline}

\begin{assumption}[Uniform bound for the QoI derivative on $B_{\rho}$] \label{ass:qoi:DJ-unif}
The functional $\mathcal J$ is Fr\'echet differentiable on $B_{\rho}$, and there exists a computable constant $L_{\mathcal J}(\rho)>0$ such that
\begin{align}\label{eq:qoi:DJ-unif}
\|D \mathcal J(w)\|_{\mathcal L(V,\mathbb R)}\leq L_{\mathcal J}(\rho) \quad \forall w \in B_{\rho}.
\end{align}
\end{assumption}

\begin{proposition}[Certified baseline enclosure]\label{prop:qoi:baseline}
Under Assumption~\ref{ass:qoi:DJ-unif}, the exact output satisfies
\begin{align}\label{eq:qoi:baseline}
\mathcal J(u)\in \left [\mathcal J(\tilde u_h) - L_{\mathcal J}(\rho)\rho,\ \ \mathcal J(\tilde u_h)+ L_{\mathcal J}(\rho)\rho \right].
\end{align}
\end{proposition}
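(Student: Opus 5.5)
The plan is a mean value argument along the segment joining $\tilde u_h$ and the verified solution $u$. By Theorem~\ref{thm:NK} (whose conditions are assumed to have succeeded for the given $\rho$), the exact solution satisfies $u\in B_\rho$, i.e.\ $\|u-\tilde u_h\|_V\le\rho$. Since $B_\rho$ is convex and contains both $\tilde u_h$ and $u$, the path $\gamma(t):=\tilde u_h+t(u-\tilde u_h)$, $t\in[0,1]$, stays in $B_\rho$. This is the same convexity observation used in Step~2 of the proof of Theorem~\ref{thm:NK}.

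Next I would define the scalar function $g(t):=\mathcal J(\gamma(t))$. By the Fr\'echet differentiability of $\mathcal J$ on $B_\rho$ (Assumption~\ref{ass:qoi:DJ-unif}) and the chain rule, $g$ is differentiable on $[0,1]$ with $g'(t)=D\mathcal J(\gamma(t))(u-\tilde u_h)$. The scalar mean value theorem then gives some $\tau\in(0,1)$ with
\begin{align*}
\mathcal J(u)-\mathcal J(\tilde u_h)=g(1)-g(0)=D\mathcal J(\gamma(\tau))(u-\tilde u_h).
\end{align*}
Using \eqref{eq:qoi:DJ-unif} at $w=\gamma(\tau)\in B_\rho$, this yields $|\mathcal J(u)-\mathcal J(\tilde u_h)|\le L_{\mathcal J}(\rho)\|u-\tilde u_h\|_V\le L_{\mathcal J}(\rho)\rho$. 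That inequality is equivalent to the enclosure \eqref{eq:qoi:baseline}.

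The only delicate point is differentiability at the endpoints when $u$ lies on the boundary of the closed ball. I would handle this by using that $\mathcal J$ is differentiable on the open set $\mathcal U\supset B_\rho$ (Section~\ref{subsec:qoi}), so that $g$ is differentiable on an open interval containing $[0,1]$. Alternatively, I would use the one-sided mean value inequality, which only requires continuity on $[0,1]$ and differentiability on $(0,1)$. Beyond this, the argument is routine.
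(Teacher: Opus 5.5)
Your proposal is correct and follows essentially the same route as the paper. The paper sets $e=u-\tilde u_h$, writes $\mathcal J(u)-\mathcal J(\tilde u_h)=\int_0^1 D\mathcal J(\tilde u_h+te)e\,dt$, and bounds the integrand by $L_{\mathcal J}(\rho)\|e\|_V\le L_{\mathcal J}(\rho)\rho$. Using the Lagrange form of the scalar mean value theorem for $g(t)=\mathcal J(\tilde u_h+te)$ instead of the integral form is a harmless variant, and it even avoids any integrability requirement on $t\mapsto D\mathcal J(\tilde u_h+te)e$. Your handling of the endpoint via the open set $\mathcal U\supset B_\rho$ is also fine.
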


\begin{proof}
Let $e:=u-\tilde u_h$. Because $u \in B_{\rho}$, we have $\|e\|_V \leq \rho$.
By the mean value theorem in Banach spaces,
\begin{align*}
\displaystyle
\mathcal J(u) - \mathcal J(\tilde u_h) = \int_0^1 D \mathcal J(\tilde u_h+t e) e dt.
\end{align*}
Taking absolute values and using \eqref{eq:qoi:DJ-unif} yields
\begin{align*}
\displaystyle
|\mathcal J(u)- \mathcal J(\tilde u_h)| \leq L_{\mathcal J}(\rho)\|e\|_V \leq L_{\mathcal J}(\rho) \rho,
\end{align*}
which gives \eqref{eq:qoi:baseline}.
\end{proof}

\subsection{Adjoint-enhanced enclosure driven by the certified ingredients (C1)--(C3)}\label{subsec:qoi:adjoint}
\begin{assumption}[Lipschitz bound for the QoI derivative on $B_{\rho}$] \label{ass:qoi:DJ-lip}

The functional \(\mathcal J\) is Fr\'echet differentiable on \(B_{\rho}\), and there exists a computable constant \(M_{\mathcal J}(\rho)\geq 0\) such that
\begin{align}\label{eq:qoi:DJ-lip}
\|D \mathcal J(w)-D \mathcal J(z)\|_{\mathcal L(V,\mathbb R)}
\leq M_{\mathcal J}(\rho)\|w-z\|_V
\quad \forall w,z\in B_{\rho}.
\end{align}
\end{assumption}
Recall the linearization $\mathcal L_{\tilde u_h}:= D \mathcal F(\tilde u_h)\in\mathcal L(V,V^*)$ and the bilinear form $B_{\tilde u_h}$ from \eqref{eq:lin-bilinear}:
\begin{align*}
\displaystyle
B_{\tilde u_h}(\delta u,v) = \langle \mathcal L_{\tilde u_h} \delta u,  v\rangle \quad \forall \delta u,v\in V.
\end{align*}
In the symmetric elliptic setting, the coercivity estimate \eqref{eq:C2-coerc} holds with a
certified constant $\alpha>0$:
\begin{align*}
B_{\tilde u_h}(v,v) \geq \alpha\|v\|_V^2 \qquad \forall v\in V.
\end{align*}
Let $j_{\tilde u_h}\in V^*$ be the linear functional defined by $j_{\tilde u_h}(v):= D \mathcal J(\tilde u_h)v$. The adjoint state $z\in V$ is defined as the unique solution of
\begin{align}\label{eq:qoi:adjoint}
B_{\tilde u_h}(v,z)= j_{\tilde u_h}(v)\qquad \forall v\in V.
\end{align}
Let $z_h\in V$ be any computable approximation of $z \in V$. We define the adjoint residual $\mathcal G(z_h)\in V^*$ as
\begin{align}\label{eq:qoi:adj-res}
\langle \mathcal G(z_h),v\rangle := j_{\tilde u_h}(v)-B_{\tilde u_h}(v,z_h) \quad \forall v\in V.
\end{align}

\begin{lemma}[Adjoint error controlled by the adjoint residual]\label{lem:qoi:adj-err}
Let $z \in V$ solve \eqref{eq:qoi:adjoint} and let $z_h\in V$ be arbitrary. Then,
\begin{align}\label{eq:qoi:adj-err}
\|z-z_h\|_V \leq \alpha^{-1} \|\mathcal G(z_h)\|_{V^*}.
\end{align}
\end{lemma}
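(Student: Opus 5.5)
The plan is to test the coercivity estimate \eqref{eq:C2-coerc} with the error $e:=z-z_h\in V$ and then use the adjoint equation \eqref{eq:qoi:adjoint} to rewrite $B_{\tilde u_h}(e,e)$ as a residual evaluation. Well-posedness of $z$ is taken from the Lax--Milgram lemma as in Section~\ref{subsec:C2-stability}: $j_{\tilde u_h}\in V^*$, and $B_{\tilde u_h}$ is bounded and coercive. Since $B_{\tilde u_h}$ is symmetric, solving \eqref{eq:qoi:adjoint} with $z$ in the second slot poses no difficulty.

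The key steps, in order, are as follows.

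First, by bilinearity and \eqref{eq:qoi:adjoint}, for every $v\in V$
\begin{align*}
B_{\tilde u_h}(v,z-z_h) = B_{\tilde u_h}(v,z) - B_{\tilde u_h}(v,z_h) = j_{\tilde u_h}(v) - B_{\tilde u_h}(v,z_h) = \langle \mathcal G(z_h),v\rangle ,
\end{align*}
which is exactly the definition \eqref{eq:qoi:adj-res}. This is a Galerkin-type error representation.

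Second, choose $v=e$. Combining \eqref{eq:C2-coerc} (the bilinear form is evaluated on the diagonal, so the argument order is irrelevant) with the dual-norm definition \eqref{eq:dual-norm} gives
\begin{align*}
\alpha\|e\|_V^2 \le B_{\tilde u_h}(e,e) = \langle \mathcal G(z_h),e\rangle \le \|\mathcal G(z_h)\|_{V^*}\,\|e\|_V .
\end{align*}

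Third, if $e=0$ there is nothing to prove. Otherwise, divide by $\alpha\|e\|_V>0$ to obtain \eqref{eq:qoi:adj-err}.

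No step is a genuine obstacle. The only point needing care is the placement of arguments in the bilinear form. The adjoint is posed with the unknown in the second slot, so the error identity must be written as $B_{\tilde u_h}(v,e)$. Coercivity is then applied to $B_{\tilde u_h}(e,e)$, where the ordering is immaterial. Hence the argument does not actually need symmetry beyond what \eqref{eq:C2-coerc} already provides.
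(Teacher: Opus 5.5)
Your proof is correct and follows essentially the paper's argument. The paper derives the same identity $\langle\mathcal G(z_h),v\rangle = B_{\tilde u_h}(v,z-z_h)$ and then bounds $\|\mathcal G(z_h)\|_{V^*}$ from below by $\alpha\|z-z_h\|_V$ via the supremum. That step implicitly takes $v=z-z_h$ and uses coercivity, which is exactly your explicit testing step.
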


\begin{proof}
By \eqref{eq:qoi:adj-res} and \eqref{eq:qoi:adjoint}, for any $v\in V$,
$\langle\mathcal G(z_h),v\rangle = B_{\tilde u_h}(v,z-z_h)$. Therefore,
\begin{align*}
\displaystyle
\|\mathcal G(z_h)\|_{V^*}
=\sup_{0\neq v\in V}\frac{B_{\tilde u_h}(v,z-z_h)}{\|v\|_V}
\geq \alpha \|z-z_h\|_V,
\end{align*}
which yields \eqref{eq:qoi:adj-err}.
\end{proof}

\begin{theorem}[Certified adjoint-enhanced enclosure]\label{thm:qoi:adjoint}
Assume the NK hypotheses (C1)--(C3) hold on $B_{\rho}$, and let $\mathcal J$ satisfy Assumption~\ref{ass:qoi:DJ-lip}. Let $z_h\in V$ be any computable approximation of the adjoint state in \eqref{eq:qoi:adjoint}, and let $\mathcal G(z_h)$ be its adjoint residual \eqref{eq:qoi:adj-res}. Then, the exact output admits the certified enclosure
\begin{align} \label{eq:qoi:adjoint-enclosure}
\mathcal J(u) \in \left [ \mathcal J(\tilde u_h) - E_{\mathcal J}(\rho;z_h),\  \mathcal J(\tilde u_h) + E_{\mathcal J}(\rho;z_h) \right],
\end{align}
where
\begin{align}\label{eq:qoi:EJ}
E_{\mathcal J}(\rho;z_h)
&:=
\left|\langle \mathcal F(\tilde u_h), z_h\rangle \right|
+\frac12 L(\rho)\rho^2 \|z_h\|_V 
+\left(\mathfrak r+\frac12 L(\rho) \rho^2\right) \alpha^{-1}\|\mathcal G(z_h)\|_{V^*}
+\frac12 M_{\mathcal J}(\rho)\rho^2.
\end{align}
\end{theorem}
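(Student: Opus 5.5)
Set $e:=u-\tilde u_h$, where $u\in B_\rho$ is the zero of $\mathcal F$ provided by Theorem~\ref{thm:NK}; then $\|e\|_V\le\rho$. The plan is to bound $|\mathcal J(u)-\mathcal J(\tilde u_h)|$ by $E_{\mathcal J}(\rho;z_h)$. First linearize the output. Write
\[
\mathcal J(u)-\mathcal J(\tilde u_h)=j_{\tilde u_h}(e)+\int_0^1\bigl(D\mathcal J(\tilde u_h+te)-D\mathcal J(\tilde u_h)\bigr)e\,dt .
\]
Since $\tilde u_h+te\in B_\rho$, Assumption~\ref{ass:qoi:DJ-lip} bounds the integrand by $M_{\mathcal J}(\rho)t\|e\|_V^2$. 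Integrating gives a remainder of at most $\tfrac12 M_{\mathcal J}(\rho)\rho^2$, which is the last term of \eqref{eq:qoi:EJ}.

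Next, represent $j_{\tilde u_h}(e)$ through the primal linearization. Let $R:=\mathcal F(u)-\mathcal F(\tilde u_h)-\mathcal L_{\tilde u_h}e$. Because $\mathcal F(u)=0$, we have $\mathcal L_{\tilde u_h}e=-\mathcal F(\tilde u_h)-R$, and Lemma~\ref{lem:remainder} gives $\|R\|_{V^*}\le\tfrac12L(\rho)\rho^2$. The adjoint residual definition \eqref{eq:qoi:adj-res} with $v=e$ yields
\[
j_{\tilde u_h}(e)=B_{\tilde u_h}(e,z_h)+\langle\mathcal G(z_h),e\rangle=-\langle\mathcal F(\tilde u_h),z_h\rangle-\langle R,z_h\rangle+\langle\mathcal G(z_h),e\rangle .
\]
The first two pieces are bounded by $|\langle\mathcal F(\tilde u_h),z_h\rangle|$ and $\tfrac12L(\rho)\rho^2\|z_h\|_V$, which are the first two terms of \eqref{eq:qoi:EJ}.

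The remaining term $\langle\mathcal G(z_h),e\rangle$ is the delicate step. Bounding it by $\|\mathcal G(z_h)\|_{V^*}\rho$ would not reproduce the stated coefficient. Instead, use the adjoint equation: $\langle\mathcal G(z_h),e\rangle=B_{\tilde u_h}(e,z-z_h)=\langle\mathcal L_{\tilde u_h}e,z-z_h\rangle$. Substituting $\mathcal L_{\tilde u_h}e=-\mathcal F(\tilde u_h)-R$ gives
\[
|\langle\mathcal G(z_h),e\rangle|\le\bigl(\|\mathcal F(\tilde u_h)\|_{V^*}+\|R\|_{V^*}\bigr)\|z-z_h\|_V .
\]
Combining (C1), Lemma~\ref{lem:remainder} and Lemma~\ref{lem:qoi:adj-err} bounds this by $(\mathfrak r+\tfrac12L(\rho)\rho^2)\alpha^{-1}\|\mathcal G(z_h)\|_{V^*}$, the third term.

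Summing the four bounds gives $|\mathcal J(u)-\mathcal J(\tilde u_h)|\le E_{\mathcal J}(\rho;z_h)$, which is the enclosure \eqref{eq:qoi:adjoint-enclosure}. The main obstacle is recognizing that the adjoint-residual term must be rerouted through $\mathcal L_{\tilde u_h}e$ in this way.
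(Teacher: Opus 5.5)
Your proposal is correct and follows the paper's proof essentially step for step: the same output linearization with remainder $\tfrac12 M_{\mathcal J}(\rho)\rho^2$, the same substitution $\mathcal L_{\tilde u_h}e=-\mathcal F(\tilde u_h)-R$, and the same bound on $\langle\mathcal L_{\tilde u_h}e,z-z_h\rangle$ via (C1), Lemma~\ref{lem:remainder} and Lemma~\ref{lem:qoi:adj-err}. The only cosmetic difference is the route to $\langle\mathcal L_{\tilde u_h}e,z-z_h\rangle$: you pass through $\langle\mathcal G(z_h),e\rangle$, whereas the paper splits $B_{\tilde u_h}(e,z)=\langle\mathcal L_{\tilde u_h}e,z_h\rangle+\langle\mathcal L_{\tilde u_h}e,z-z_h\rangle$ directly.
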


\begin{proof}
Let $e:=u-\tilde u_h$. Then, $\|e\|_V \leq \rho$ and $\mathcal F(u)=0$. We write
\begin{align*}
\displaystyle
\mathcal J(u) - \mathcal J(\tilde u_h)=D \mathcal J(\tilde u_h) e+\mathcal R_{\mathcal J},
\quad
\mathcal R_{\mathcal J} := \mathcal J(u) - \mathcal J(\tilde u_h) - D \mathcal J(\tilde u_h)e.
\end{align*}
From \eqref{eq:qoi:DJ-lip},
\begin{align*}
\displaystyle
|\mathcal R_{\mathcal J}|
&\leq \int_0^1 \left \| D \mathcal J (\tilde u_h + t e) - D \mathcal J(\tilde u_h) \right \|_{V^*} \| e \|_V dt \\
&\leq \frac{1}{2} M_{\mathcal J}(\rho) \| e \|_V^2 \leq \frac{1}{2} M_{\mathcal J}(\rho) \rho^2.
\end{align*}
Because $D \mathcal J(\tilde u_h)e=j_{\tilde u_h}(e)$ and $z$ solves \eqref{eq:qoi:adjoint}, we have
\begin{align*}
\displaystyle
j_{\tilde u_h}(e)=B_{\tilde u_h}(e,z)=\langle \mathcal L_{\tilde u_h}e,z\rangle = \langle \mathcal L_{\tilde u_h}e,z_h\rangle +\langle \mathcal L_{\tilde u_h}e, z-z_h\rangle.
\end{align*}
Using $\mathcal F(u)=0$ and $\mathcal L_{\tilde u_h}=D\mathcal F(\tilde u_h)$, we set
\begin{align*}
\displaystyle
\mathcal R_{\mathcal F}:=\mathcal F(u)-\mathcal F(\tilde u_h)-\mathcal L_{\tilde u_h}e,
\quad \text{so that}\quad
\mathcal L_{\tilde u_h}e=-\mathcal F(\tilde u_h)-\mathcal R_{\mathcal F}.
\end{align*}
From (C3) and $\|e\|_V \leq \rho$, the standard remainder estimate yields
\begin{align*}
\displaystyle
\| \mathcal R_{\mathcal F} \|_{V^*}
&\leq \frac{1}{2} L(\rho)\|e\|_V^2 \leq \frac{1}{2}L(\rho)\rho^2.
\end{align*}
Therefore,
\begin{align*}
\displaystyle
\left |\langle \mathcal L_{\tilde u_h}e,z_h\rangle \right|
=\left |\langle \mathcal F(\tilde u_h),z_h\rangle+\langle \mathcal R_{\mathcal F},z_h\rangle \right|
\leq \left |\langle \mathcal F(\tilde u_h),z_h\rangle \right|
+\frac12 L(\rho)\rho^2 \|z_h\|_V.
\end{align*}
Furthermore,
\begin{align*}
\displaystyle
\left |\langle \mathcal L_{\tilde u_h}e, z-z_h\rangle \right|
\leq \|\mathcal L_{\tilde u_h}e\|_{V^*}\,\|z-z_h\|_V
\leq \left(\|\mathcal F(\tilde u_h)\|_{V^*}+\|\mathcal R_{\mathcal F}\|_{V^*} \right) \|z-z_h\|_V.
\end{align*}
Using (C1), i.e.,  $\|\mathcal F(\tilde u_h)\|_{V^*} \leq \mathfrak r$, $\|\mathcal R_{\mathcal F}\|_{V^*}\le\tfrac12 L(\rho)\rho^2$, and Lemma~\ref{lem:qoi:adj-err} gives
\begin{align*}
\displaystyle
\left |\langle \mathcal L_{\tilde u_h}e, z-z_h\rangle \right|
\leq \left(\mathfrak r + \frac{1}{2} L(\rho)\rho^2\right)\alpha^{-1}\|\mathcal G(z_h)\|_{V^*}.
\end{align*}
Combining the above estimates with the bound for $\mathcal R_{\mathcal J}$ yields $|\mathcal J(u) - \mathcal J(\tilde u_h)|\le E_{\mathcal J}(\rho;z_h)$, which implies \eqref{eq:qoi:adjoint-enclosure}.
\end{proof}

\begin{remark}[Making $\|\mathcal G(z_h)\|_{V^\ast}$ fully computable]\label{rem:adjoint-residual-computable}
The scalar term $\langle \mathcal F(\tilde u_h), z_h\rangle$ in \eqref{eq:qoi:EJ} is directly computable once an approximation $z_h\in V$ is available. (If strict rigor is required, the evaluation of this duality pairing can be carried out with outward rounding/interval arithmetic at the scalar level.)

It remains to obtain a guaranteed upper bound for the dual norm $\|\mathcal G(z_h)\|_{V^\ast}$, where $\mathcal G(z_h)=j_{\tilde u_h}- \mathcal L_{\tilde u_h}z_h\in V^\ast$ is defined by \eqref{eq:qoi:adj-res}. Because we work in the symmetric elliptic setting, $\mathcal L_{\tilde u_h}$ is the operator induced by the bilinear form $B_{\tilde u_h}$, i.e.,
\begin{align*}
\displaystyle
\langle \mathcal L_{\tilde u_h} w, v\rangle := B_{\tilde u_h}(w,v)\qquad \forall\, w,v\in V .
\end{align*}
Therefore, the exact adjoint state $z\in V$ solves the linear diffusion--reaction problem
\begin{align*}
\displaystyle
\mathcal L_{\tilde u_h} z = j_{\tilde u_h}\quad \text{in } V^\ast ,
\end{align*}
and $\mathcal G(z_h)$ is precisely the residual of this linear problem at $z_h$. For the volume-type QoIs in Section~\ref{subsec:qoi:examples}, the functional $j_{\tilde u_h}$ admits an $L^2$-density $\psi\in L^2(\Omega)$ such that
\begin{align*}
\displaystyle
j_{\tilde u_h}(v)=\int_\Omega \psi v dx \quad \forall v\in V.
\end{align*}
For instance, $\psi$ is the given weight for the linear output, and $\psi=\tilde u_h$ for the quadratic
$L^2$-energy output.

To bound $\|\mathcal G(z_h)\|_{V^\ast}$, we apply the same equilibrated-flux residual certification as in Section~\ref{sec:flux}, to the linear adjoint operator. Concretely, in the semilinear diffusion--reaction case \eqref{eq:F-semi}, the adjoint residual reads
\begin{align*}
\displaystyle
\langle \mathcal G(z_h), v\rangle
= \int_\Omega \psi  v dx
- \int_\Omega A\nabla z_h\cdot\nabla v dx
- \int_\Omega \partial_s c(x,\tilde u_h)  z_h  v dx
\quad \forall v\in V,
\end{align*}
where $\psi$ is the Riesz representative of $j_{\tilde u_h}$ in $L^2(\Omega)$ (e.g., $\psi$ is the given weight for linear outputs, and $\psi=\tilde u_h$ for the quadratic $L^2$-energy output in Section~\ref{subsec:qoi:examples}).

Given $z_h$, we define the adjoint source term as
\begin{align*}
\displaystyle
\textcolor{black}{s_h^{\ast} :=  \partial_s c(\cdot,\tilde u_h) z_h - \psi,}
\quad
r_h^{\ast}|_T := \Pi_T^0(s_h^{\ast}) \quad \forall T\in\mathbb T_h.
\end{align*}
Let $\sigma_h^{\ast}\in H(\mathrm{div};\Omega)$ be any flux satisfying the (adjoint) divergence target $\mathrm{div} \sigma_h^{\ast}=r_h^{\ast}$ elementwise. Then, the element residual is
\begin{align*}
\displaystyle
R_T^{\ast}(\sigma_h^{\ast})
:= s_h^{\ast} - \mathrm{div}\sigma_h^{\ast}\quad \text{in }T.
\end{align*}
\textcolor{black}{Because $\|\mathcal G(z_h)\|_{V^\ast} = \| - \mathcal G(z_h)\|_{V^\ast}$, } we apply Lemma~\ref{thm:res-majorant} to $- \mathcal G(z_h)= \mathcal L_{\tilde u_h}z_h - j_{\tilde u_h}$. Then,
\begin{align*}
\displaystyle
\textcolor{black}{\|\mathcal G(z_h)\|_{V^\ast} = \| - \mathcal G(z_h)\|_{V^\ast}}
\leq \|\sigma_h^{\ast}-A\nabla z_h\|_{A^{-1}}
+
\alpha_0^{- \frac{1}{2}}\left(\sum_{T \in \mathbb T_h} \left( \frac{h_T}{\pi} \right)^2 
\|R_T^{\ast}(\sigma_h^{\ast})\|_{L^2(T)}^2 \right)^{\frac{1}{2}}.
\end{align*}
The flux $\sigma_h^{\ast}$ can be constructed by the same explicit Marini-type route of Section~\ref{sec:fluxrecon} (with $s_h$ replaced by $s_h^{\ast}$), so the computation of $\|\mathcal G(z_h)\|_{V^\ast}$ requires only the same low-order postprocessing as for $\|\mathcal F(\tilde u_h)\|_{V^\ast}$.
\end{remark}

\subsection{Examples of volume-type QoIs and admissible constants}\label{subsec:qoi:examples}
We specialize the abstract output bounds to two volume-type QoIs whose derivatives admit \(L^2(\Omega)\)-densities. Throughout, we use the continuous embedding
\begin{align}\label{eq:embed-L2}
\|v\|_{L^2(\Omega)}\le C_2 \|v\|_V \quad \forall v\in V,
\end{align}
with $C_2>0$ as introduced in \eqref{eq:CP}.

\paragraph*{Linear output functional.}
Let $\mathcal J:V\to\mathbb R$ be given by
\begin{align*}
\displaystyle
\mathcal J(v):=\int_\Omega \psi v dx, \quad \psi\in L^2(\Omega).
\end{align*}
Then, $\mathcal J$ is linear and its Fr\'echet derivative is independent of the evaluation point:
for every $w\in V$ and every direction $\delta\in V$,
\begin{align*}
\displaystyle
D \mathcal J(w) \delta=\int_\Omega \psi \delta dx.
\end{align*}
Consequently, $D \mathcal J(w)-D \mathcal J(z)\equiv 0$ for all $w,z\in V$, so the Lipschitz constant in \eqref{eq:qoi:DJ-lip} can be chosen as
\begin{align*}
\displaystyle
M_{\mathcal J}(\rho)=0.
\end{align*}
Furthermore, the right-hand side functional in the adjoint problem is precisely $j_{\tilde u_h}=D \mathcal J(\tilde u_h)$, and its dual norm satisfies
\begin{align*}
\displaystyle
\|j_{\tilde u_h}\|_{V^*}
=\sup_{0\neq v\in V}\frac{\left |\int_\Omega \psi v dx \right|}{\|v\|_V}
\leq \sup_{0\neq v\in V}\frac{\|\psi\|_{L^2(\Omega)} \|v\|_{L^2(\Omega)}}{\|v\|_V}
\leq \|\psi\|_{L^2(\Omega)}C_2,
\end{align*}
where we used Cauchy--Schwarz in $L^2(\Omega)$ and the embedding \eqref{eq:embed-L2}. 

For the baseline enclosure in Proposition~\ref{prop:qoi:baseline}, one may take $L_{\mathcal J}(\rho) := \|\psi\|_{L^2(\Omega)} C_2$.

\paragraph*{Quadratic $L^2$-energy output.}
Let $\mathcal J:V\to\mathbb R$ be given by
\begin{align*}
\displaystyle
\mathcal J(v):=\frac12\|v\|_{L^2(\Omega)}^2=\frac12\int_\Omega v^2 dx.
\end{align*}
A direct computation shows that for every $w\in V$ and $\delta\in V$,
\begin{align*}
\displaystyle
D \mathcal J(w) \delta = \int_\Omega w \delta dx,
\end{align*}
that is, $D \mathcal J(w)\in V^*$ is the $L^2$-pairing with $w$. Therefore, for any $w,z\in V$,
\begin{align*}
\displaystyle
(D \mathcal J(w) - D \mathcal J(z))\delta = \int_\Omega (w-z) \delta dx.
\end{align*}
Using Cauchy--Schwarz in $L^2(\Omega)$ and the embedding \eqref{eq:embed-L2}, we obtain
\begin{align*}
\displaystyle
\|D \mathcal J(w) - D \mathcal J(z)\|_{\mathcal L(V,\mathbb R)}
&=\sup_{0\neq \delta\in V}\frac{\left |\int_\Omega (w-z) \delta dx \right|}{\|\delta\|_V} \\
&\leq \sup_{0\neq \delta\in V}\frac{\|w-z\|_{L^2(\Omega)} \|\delta\|_{L^2(\Omega)}}{\|\delta\|_V} \leq C_2 \|w-z\|_{L^2(\Omega)}.
\end{align*}
Applying \eqref{eq:embed-L2} yields
\begin{align*}
\displaystyle
\|D \mathcal J(w)- D \mathcal J(z)\|_{\mathcal L(V,\mathbb R)}
\leq C_2^2\,\|w-z\|_V.
\end{align*}
Therefore, \eqref{eq:qoi:DJ-lip} holds with the admissible choice
\begin{align*}
\displaystyle
M_{\mathcal J}(\rho)=C_2^2.
\end{align*}
Because $j_{\tilde u_h}=D \mathcal J(\tilde u_h)$ and $D \mathcal J(\tilde u_h) v = \int_\Omega \tilde u_h v dx$, we similarly have
\begin{align*}
\displaystyle
\|j_{\tilde u_h}\|_{V^*}
=\sup_{0\neq v\in V}\frac{\left |\int_\Omega \tilde u_h v dx \right|}{\|v\|_V}
\leq C_2\|\tilde u_h\|_{L^2(\Omega)}.
\end{align*}
For Proposition~\ref{prop:qoi:baseline}, a convenient bound is 
\begin{align*}
\displaystyle
L_J(\rho)\le C_2\sup_{w\in B_\rho}\|w\|_{L^2(\Omega)} \leq C_2 \left(\|\tilde u_h\|_{L^2(\Omega)}+C_2\rho \right).
\end{align*}

\begin{remark}[Baseline vs.\ adjoint-based output bounds]
Proposition~\ref{prop:qoi:baseline} provides a baseline enclosure of $\mathcal J(u)$ that does not require solving the adjoint problem. Its bound depends on the worst-case sensitivity constant $\displaystyle L_{\mathcal J}(\rho):=\sup_{w\in B_{\rho}}\|D \mathcal J(w)\|_{V^*}$. For the quadratic $L^2$-energy output, one may choose
\begin{align*}
\displaystyle
L_{\mathcal J}(\rho)= C_2\bigl(\|\tilde u_h\|_{L^2(\Omega)}+C_2\rho\bigr),
\end{align*} 
so that Proposition~\ref{prop:qoi:baseline} is fully explicit. In contrast, once an adjoint approximation is available, Theorem~\ref{thm:qoi:adjoint} replaces this worst-case sensitivity by goal-oriented residual terms (e.g., $\langle \mathcal F(\tilde u_h),z_h\rangle$ and $\|\mathcal G(z_h)\|_{V^*}$), which typically yields a much sharper certified bound in practice.
\end{remark}

\FloatBarrier
\section{Numerical experiments}\label{sec:numerics}
We demonstrate the certification procedure for the monotone semilinear model considered below. The numerical experiments report the computed NK components, the verification radius, and the resulting output enclosures. Sections~\ref{subsec:numerics:model}--\ref{subsec:numerics:qoi} use uniform right-triangular meshes, whereas Section~\ref{subsec:anisotropic-numerics} repeats the same certification workflow on anisotropic right-triangular meshes.

In this section, the term ``verified'' refers to bounds obtained from the certified estimators derived in the preceding sections. More precisely, the reported quantities are computed evaluations of the residual bounds, NK admissibility indicators, verification radii, and output enclosures associated with the computed finite element solution. They should not be interpreted as fully interval-arithmetic computer-assisted proofs with outward rounding; see Remark~\ref{rem:sec9-floating-point}.

\begin{remark}[Implementation]

All computations in Section~\ref{sec:numerics} were performed with a pure Python/NumPy/SciPy implementation of the same low-order \(\mathbb{P}^1\)--CR--RT certification workflow. The code explicitly assembles the conforming \(P^1\) nonlinear problem, the CR auxiliary problem, and the Marini-type \(\mathbb{RT}^0\) flux reconstruction. The only change between the uniform and anisotropic tests is the mesh family: Sections~\ref{subsec:numerics:model}--\ref{subsec:numerics:qoi} use \(N_x=N_y=N\), while Section~\ref{subsec:anisotropic-numerics} uses \(N_x=M\) and \(N_y=M^2\).

\end{remark}

\begin{remark}[Floating-point qualification]
\label{rem:sec9-floating-point}

All scalar certification quantities presented in Section~\ref{sec:numerics}, including \(\mathfrak r\), \(L(\rho)\), \(p(\rho)\), \(q(\rho)\), and the output bounds, are evaluated in standard double precision, without outward rounding. Thus, the reported intervals should be understood as floating-point evaluations of the rigorous estimators derived above, rather than as fully interval-arithmetic computer-assisted proofs. A fully rigorous implementation would require outward rounding or interval arithmetic for the final scalar post-processing step.

\end{remark}

\subsection{Model problem}\label{subsec:numerics:model}
We consider the semilinear diffusion--reaction problem as a special case of the semilinear model in Section~\ref{sec:semilinear-problem} with $A(x)\equiv I$ and $c(x,u)=u^3$:
\begin{align}\label{eq:numerics:pde}
-\varDelta u + u^3 = f  \quad  \text{in }\Omega, \quad u = 0 \quad \text{on } \partial\Omega,
\end{align}
where $\Omega \subset \mathbb R^2$. To enable an accuracy check, we employ a manufactured exact solution $u^*$, and we define $f := -\varDelta u^* + (u^*)^3$. We set
\begin{align*}
\displaystyle
u^*(x,y)=\sin(\pi x)\sin(\pi y)\quad\text{on }\Omega=(0,1)^2, \quad f = 2\pi^2 u^* + (u^*)^3.
\end{align*}
We employ conforming finite elements $V_h\subset V=H_0^1(\Omega)$ as in Section~2.5. We use continuous piecewise polynomials of degree $k=1$ on a family of conforming simplicial meshes $\{\mathbb T_h\}$. The discrete nonlinear problem is solved by a standard Newton method, and we denote by $\tilde u_h \in V_h$ the computed approximation (typically the final Newton iterate). Given an iterate $u_h^k\in V_h$, compute the Newton increment $\delta u_h^k\in V_h$ by
\begin{align*}
\displaystyle
\langle D \mathcal F(u_h^k)\,\delta u_h^k, v_h\rangle = - \langle \mathcal F(u_h^k), v_h\rangle
\quad \forall v_h\in V_h,
\end{align*}
and update $u_h^{k+1}=u_h^k+\delta u_h^k$. We stop if $\|\delta u_h^k\|_V \le \texttt{tolNewton}$ (or if $k$ reaches $\texttt{maxNewtonIters}$) with $\texttt{tolNewton}=10^{-12}$ and $\texttt{maxNewtonIters}=25$ in the runs reported below. (This is not a certified criterion; certification is performed afterward by the bounds in Sections~4--8.) In the semilinear setting, we take the reference tensor $A=I$, so that the fixed energy norm \eqref{eq:V-norm} reduces to
\begin{align*}
\displaystyle
\|v\|_V = \left( \int_\Omega |\nabla v|^2 dx \right)^{\frac{1}{2}}.
\end{align*}
Therefore, $\alpha_0=1$ in \eqref{eq:A0-coerc}, and the embedding constants $C_p$ in \eqref{eq:CP} are those for $H_0^1(\Omega)\hookrightarrow L^p(\Omega)$ in this metric. The reaction $c(u)=u^3$ is monotone, hence the stability constant $\alpha$ can be certified by the coercivity argument of Section \ref{subsec:monotone-bound}. This makes it a clean testbed to demonstrate the residual certification (Section \ref{sec:flux}--\ref{sec:fluxrecon}), the Lipschitz bound (Section \ref{sec:C3}), and the radius selection (Algorithm \ref{alg:rho}).

\begin{figure}[!htbp]
\centering
\includegraphics[width=0.45\textwidth]{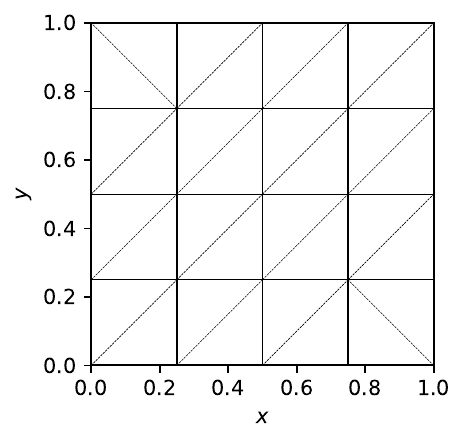}
\caption{Standard right-triangular mesh with  \(N=4\).}
\label{fig:standard-mesh}
\end{figure}

\subsection{Certified NK inputs (C1)--(C3)}\label{subsec:numerics:nk-inputs}
We detail how the three certified inputs in Section \ref{subsec:NK-assumptions} are instantiated for \eqref{eq:numerics:pde}.

\paragraph*{C1.}
We compute a flux $\sigma_h\in H(\mathrm{div};\Omega)$~satisfying the mean equilibration condition \eqref{eq:mean-equil} by the explicit Marini-type construction of Section \ref{sec:fluxrecon}. We then evaluate the two terms \eqref{eq:eta_mis}--\eqref{eq:eta_osc}:
\begin{align*}
\displaystyle
\eta_{\mathrm{mis}}(\sigma_h)
&= \|\sigma_h-\nabla\tilde u_h\|_{L^2(\Omega)^d}, \\
\eta_{\mathrm{osc}}(\sigma_h)
&=
\left (\sum_{T\in\mathbb T_h} \left (\frac{h_T}{\pi} \right)^2\| \mathcal R_T(\sigma_h)\|_{L^2(T)}^2 \right)^{\frac{1}{2}}.
\end{align*}
The certified residual bound required in \eqref{eq:C1} is set to
\begin{align*}
\displaystyle
\mathfrak r := \eta(\sigma_h) = \eta_{\mathrm{mis}}(\sigma_h)+\eta_{\mathrm{osc}}(\sigma_h),
\end{align*}
see Lemma~\ref{thm:res-majorant}.

\paragraph*{C2.}
For $c(u)=u^3$, we have $\partial_s c(u)=3u^2\ge 0$ pointwise. Therefore, the monotonicity condition in Section \ref{subsec:monotone-bound} holds with $A_0=A=I$, the coercivity estimate \eqref{eq:C2-coerc} is satisfied with
\begin{align*}
\displaystyle
\alpha := 1,
\end{align*}
so that the stability ingredient \eqref{eq:C2} holds with $\|\mathcal L_{\tilde u_h}^{-1}\|_{L(V^\ast,V)}\leq 1$.

\paragraph*{C3.}
We use the semilinear route of Section \ref{subsec:C3-semi-general}. Because $\partial_s c(u)=3u^2$,
 for any $w,z\in B_\rho$,
\begin{align*}
\displaystyle
\partial_s c(w)-\partial_s c(z) = 3(w+z)(w-z).
\end{align*}
We choose $p=4$ in Lemma \ref{lem:C3-semi} (hence $q=2$) and employ the embedding \eqref{eq:CP} with constant $C_4$. Proceeding as in Example \ref{ex:AC-reaction} (which treats the same algebraic structure) with $\kappa\equiv 1$, we obtain \eqref{eq:C3-semi-gamma} with
\begin{align*}
\displaystyle
\Gamma(\rho)=6 C_4^2 \left(\|\tilde u_h\|_V+\rho \right),
\end{align*}
and thus the Lipschitz function in \eqref{eq:C3} can be taken as
\begin{align}\label{eq:numerics:Lrho}
L(\rho)=C_4^2 \Gamma(\rho)
=6 C_4^4 \left(\|\tilde u_h\|_V+\rho \right).
\end{align}
The constant $C_4$ is treated as a domain-dependent input (Remark \ref{rem=embed}). For the unit square $\Omega=(0,1)^2$, we take $C_4$ from the verified numerical inclusion discussed in \cite[Theorem 1.1]{TanSekMizOis17}:
\begin{align*}
\displaystyle
C_4(\Omega)\in[ 0.28524446071925, 0.28524446071929 ].
\end{align*}
In the computations, we fix the conservative choice
\begin{align*}
\displaystyle
C_4 := 0.28524446071929.
\end{align*}

\subsection{Computed selection of the verification radius}\label{subsec:rho-selection}
With $\mathfrak r$, $\alpha$, and $L(\rho)$ at hand, we set
\begin{align*}
\displaystyle
\eta = \frac{\mathfrak r}{\alpha},
\end{align*}
and compute a verification radius $\rho$ by the simple one-dimensional procedure in Algorithm~\ref{alg:rho}. We call $\rho>0$ {admissible} if the NK conditions \eqref{eq:NK-conds} hold, i.e., $p(\rho)\leq 0$ and $q(\rho)<1$, where $p(\rho),q(\rho)$ are defined in \eqref{eq:pq}, see Table \ref{tab:sec9-cert}. In the affine Lipschitz model $L(\rho)=L_0+L_1\rho$ with $L_0\ge 0$ and $L_1>0$, the admissible set $\{\rho>0:\ p(\rho) \leq 0,\ q(\rho)<1\}$ is either empty, a singleton, or an interval with a positive left endpoint. Therefore, once a bracket is found, bisection is well-defined and returns a certified admissible radius. The proofs and implementation details are deferred to Appendix~\ref{app:rho-selection}.

\begin{remark}
In our implementation, we work in a normalized setting with $\alpha=1$, so that $\eta=\mathfrak r$.
\end{remark}

\begin{table}[htb]
\centering
\begingroup
\footnotesize
\setlength{\tabcolsep}{3pt}
\sisetup{
  scientific-notation = true,
  round-mode = figures,
  round-precision = 3
}
\caption{NK certification data on uniform right-triangular meshes. We report the scaled residual parameter $\eta=\mathfrak r/\alpha$, the selected radius $\rho$, the admissibility indicators $q(\rho)$ and $p(\rho)$, and the ratio $\rho/\eta$.}
\label{tab:sec9-cert}
\begingroup
\footnotesize
\setlength{\tabcolsep}{3pt}
\resizebox{\textwidth}{!}{%
\begin{tabular}{r S S S S S S}
\toprule
{$N$} & {$h_{\max}$} & {$\eta$} & {$\rho$} & {$q(\rho)$} & {$p(\rho)$} & {$\rho/\eta$} \\
\midrule
16 & 8.838835e-02 & 2.6903287414258586e-01 & 5.3806574828517173e-01 & 5.8784730177164825e-02 & -2.5321784922732682e-01 & 2.000000e+00 \\
32 & 4.419417e-02 & 1.3034010971039087e-01 & 2.6068021942078173e-01 & 2.5677710390283864e-02 & -1.2699327412100964e-01 & 2.000000e+00 \\
64 & 2.209709e-02 & 6.4078361326617914e-02 & 1.2815672265323583e-01 & 1.1957787836111627e-02 & -6.3312125876988520e-02 & 2.000000e+00 \\
128 & 1.104854e-02 & 3.1760667939535241e-02 & 6.3521335879070481e-02 & 5.7649040330984558e-03 & -3.1577570736836716e-02 & 2.000000e+00 \\
256 & 5.524272e-03 & 1.5810020887994110e-02 & 3.1620041775988220e-02 & 2.8297548984445049e-03 & -1.5765282403941799e-02 & 2.000000e+00 \\
\bottomrule
\end{tabular}%
}
\endgroup
\endgroup
\end{table}

\begin{table}[htb]
\centering
\begingroup
\footnotesize
\setlength{\tabcolsep}{3pt}
\sisetup{
  scientific-notation = true,
  round-mode = figures,
  round-precision = 3
}
\caption{Sanity check against the manufactured solution. We report the true energy error \(\|u^*-\tilde u_h\|_V\), the certified radius \(\rho\), and their ratio. The column ``inside?'' indicates whether the manufactured solution lies inside the certified ball, i.e., \(\|u^*-\tilde u_h\|_V\le \rho\).}
\label{tab:sec9-sanity}
\begingroup
\footnotesize
\setlength{\tabcolsep}{3pt}
\resizebox{\textwidth}{!}{%
\begin{tabular}{r S S S S c}
\toprule
{$N$} & {$h_{\max}$} & {$\|u^*-\tilde u_h\|_V$} & {$\rho$} & {$\|u^*-\tilde u_h\|_V/\rho$} & {inside?} \\
\midrule
16 & 8.838835e-02 & 2.1750010067486833e-01 & 5.3806574828517173e-01 & 4.042259e-01 & yes \\
32 & 4.419417e-02 & 1.0897490363863399e-01 & 2.6068021942078173e-01 & 4.180406e-01 & yes \\
64 & 2.209709e-02 & 5.4513767537551147e-02 & 1.2815672265323583e-01 & 4.253680e-01 & yes \\
128 & 1.104854e-02 & 2.7260115992612896e-02 & 6.3521335879070481e-02 & 4.291490e-01 & yes \\
256 & 5.524272e-03 & 1.3630460224232775e-02 & 3.1620041775988220e-02 & 4.310703e-01 & yes \\
\bottomrule
\end{tabular}%
}
\endgroup
\endgroup
\end{table}

\subsection{Computed output enclosures: baseline and adjoint-enhanced bounds}\label{subsec:numerics:qoi}
We consider the two volume-type QoIs from Section~\ref{subsec:qoi:examples}.

\paragraph*{QoI 1: Linear output.}
Let $\mathcal J_1(u)=\int_\Omega \psi u\,dx$ with $\psi\equiv 1$ (domain average up to scaling). Then, $M_{\mathcal J_1}(\rho)=0$ and the baseline constant $L_{\mathcal J_1}(\rho)$ in Assumption \ref{ass:qoi:DJ-unif} can be taken as $L_{\mathcal J_1}(\rho)= \|j_{\tilde u_h}\|_{V^\ast}\le C_2\|\psi\|_{L^2(\Omega)}$, see Section \ref{subsec:qoi:examples}.

\paragraph*{QoI 2: Quadratic $L^2$-energy.}
Let $\mathcal J_2(u)=\frac12\|u\|_{L^2(\Omega)}^2$. Then, $M_{\mathcal J_2}(\rho)$ can be chosen as $M_{\mathcal J_2}(\rho)=C_2^2$, see Section \ref{subsec:qoi:examples}.

\paragraph*{Baseline enclosures (Proposition \ref{prop:qoi:baseline})}\label{subsec:numerics:qoi-baseline}
For each QoI $J_i$, $i=1,2$, Proposition \ref{prop:qoi:baseline} yields
\begin{align*}
\displaystyle
\mathcal J_i(u)\in [\mathcal J_i(\tilde u_h)- L_{ \mathcal  J_i}(\rho)\rho, \mathcal  J_i(\tilde u_h)+ L_{\mathcal  J_i}(\rho)\rho].
\end{align*}
\paragraph*{Adjoint-enhanced enclosures (Theorem \ref{thm:qoi:adjoint})}\label{subsec:numerics:qoi-adjoint}
We compute an approximation $z_h\in V_h$ of the adjoint state $z\in V$ defined by \eqref{eq:qoi:adjoint}, i.e.,
\begin{align*}
\displaystyle
B_{\tilde u_h}(v,z)=j_{\tilde u_h}(v) \quad \forall v\in V,
\end{align*}
with $B_{\tilde u_h}$ induced by the linearization $\mathcal L_{\tilde u_h}=D \mathcal F(\tilde u_h)$. For \eqref{eq:numerics:pde}, we have
\begin{align*}
\displaystyle
B_{\tilde u_h}(w,v) = \int_\Omega \nabla w \cdot \nabla v dx + \int_\Omega 3(\tilde u_h)^2 w v dx.
\end{align*}
We then evaluate the certified error budget $E_{\mathcal J}(\rho;z_h)$ in \eqref{eq:qoi:EJ}. The remaining term $\|\mathcal G(z_h)\|_{V^\ast}$ is bounded in a guaranteed manner by applying Lemma \ref{thm:res-majorant} to the linear adjoint residual problem described in Section \ref{subsec:qoi:adjoint}.

For each QoI, Tables~\ref{tab:sec9-qoi-linear} and~\ref{tab:sec9-qoi-quadratic} report the computed value \(\mathcal J_i(\tilde u_h)\), the true error for the manufactured solution, the baseline width, the adjoint-enhanced width, and the ratio between the true error and the adjoint-enhanced width.

\begin{table}[htb]
\centering
\begingroup
\footnotesize
\setlength{\tabcolsep}{3pt}
\sisetup{
  scientific-notation = true,
  round-mode = figures,
  round-precision = 3
}
\caption{Computed enclosure for the linear quantity of interest $\mathcal J_1(u)=\int_\Omega u\,dx$ on uniform meshes. Here, \(W_{\rm base}\) denotes the baseline width and \(W_{\rm adj}\) denotes the adjoint-enhanced width. For the manufactured test, we also report the true error and the ratio err/adj.}
\label{tab:sec9-qoi-linear}
\begingroup
\footnotesize
\setlength{\tabcolsep}{3pt}
\resizebox{\textwidth}{!}{%
\begin{tabular}{r S S S S S S}
\toprule
{$N$} & {$h_{\max}$} & {$\mathcal J_1(\tilde u_h)$} & {$|\mathcal J_1(u^*)-\mathcal J_1(\tilde u_h)|$} &
{\(W_{\rm base}\)} & {\(W_{\rm adj}\)} & {err/adj} \\
\midrule
16 & 8.838835e-02 & 4.0172955528567506e-01 & 3.555179e-03 & 1.211073e-01 & 9.644936e-03 & 3.686058e-01 \\
32 & 4.419417e-02 & 4.0438511617436784e-01 & 8.996184e-04 & 5.867366e-02 & 2.219335e-03 & 4.053550e-01 \\
64 & 2.209709e-02 & 4.0505915810861437e-01 & 2.255765e-04 & 2.884540e-02 & 5.315310e-04 & 4.243901e-01 \\
128 & 1.104854e-02 & 4.0522829858448589e-01 & 5.643598e-05 & 1.429732e-02 & 1.299963e-04 & 4.341353e-01 \\
256 & 5.524272e-03 & 4.0527062295897598e-01 & 1.411161e-05 & 7.117010e-03 & 3.213867e-05 & 4.390850e-01 \\
\bottomrule
\end{tabular}%
}
\endgroup
\endgroup
\end{table}

\begin{table}[htb]
\centering
\begingroup
\footnotesize
\setlength{\tabcolsep}{3pt}
\sisetup{
  scientific-notation = true,
  round-mode = figures,
  round-precision = 3
}
\caption{Computed enclosure for the quadratic quantity of interest $\mathcal J_2(u)=\tfrac12\int_\Omega u^2\,dx$ on uniform meshes. The columns have the same meaning as in Table~\ref{tab:sec9-qoi-linear}.}
\label{tab:sec9-qoi-quadratic}
\begingroup
\footnotesize
\setlength{\tabcolsep}{3pt}
\resizebox{\textwidth}{!}{%
\begin{tabular}{r S S S S S S}
\toprule
{$N$} & {$h_{\max}$} & {$\mathcal J_2(\tilde u_h)$} & {$|\mathcal J_2(u^*)-\mathcal J_2(\tilde u_h)|$} &
{\(W_{\rm base}\)} & {\(W_{\rm adj}\)} & {err/adj} \\
\midrule
16 & 8.838835e-02 & 1.2280099805235657e-01 & 2.199002e-03 & 8.201916e-02 & 1.250646e-02 & 1.758293e-01 \\
32 & 4.419417e-02 & 1.2444618694850887e-01 & 5.538131e-04 & 3.443567e-02 & 2.879958e-03 & 1.922990e-01 \\
64 & 2.209709e-02 & 1.2486130126557125e-01 & 1.386987e-04 & 1.566278e-02 & 6.894996e-04 & 2.011585e-01 \\
128 & 1.104854e-02 & 1.2496531011195330e-01 & 3.468989e-05 & 7.454290e-03 & 1.685939e-04 & 2.057600e-01 \\
256 & 5.524272e-03 & 1.2499132657988876e-01 & 8.673420e-06 & 3.634359e-03 & 4.167795e-05 & 2.081057e-01 \\
\bottomrule
\end{tabular}%
}
\endgroup
\endgroup
\end{table}

\subsection{Behavior on anisotropic right-triangular mesh families} \label{subsec:anisotropic-numerics}
We finally examine the behavior of the computed certification quantities on anisotropic right-triangular mesh families. This test is motivated by the anisotropic mesh construction used in \cite{IshizakaKobayashiTsuchiya2023,IshizakaPhD2022}, here specialized to two dimensions. The purpose is not to prove a general anisotropic efficiency theorem for the equilibrated estimator, nor to provide a fully interval-verified computer-assisted proof in the sense of Remark~\ref{rem:sec9-floating-point}. The test complements Proposition~\ref{prop:anisotropic-mismatch-consistency} by showing how the flux-mismatch contribution, the residual bound, and the computed verification radius behave under anisotropic refinement.

We use the same monotone semilinear model problem as above,
\begin{align*}
\displaystyle
-\varDelta u + u^3 = f \quad\text{in }\Omega=(0,1)^2, \quad u=0\quad\text{on }\partial\Omega,
\end{align*}
with the manufactured exact solution
\begin{align*}
\displaystyle
u(x,y)=\sin(\pi x)\sin(\pi y).
\end{align*}
The right-hand side is therefore chosen as
\begin{align*}
\displaystyle
f=-\Delta u+u^3.
\end{align*}
The nonlinearity is monotone, so that the coercivity-based verification of Section~\ref{sec:stability} applies.

The anisotropic meshes are generated as follows. Let $M$ be the number of subdivisions in the $x$-direction and let $N$ be the number of subdivisions in the $y$-direction. Following the anisotropic scaling used in \cite{IshizakaKobayashiTsuchiya2023,IshizakaPhD2022}, but in two space dimensions, we set
\begin{align*}
\displaystyle
N=\lfloor M^\gamma\rfloor,\qquad \gamma=2.0.
\end{align*}
Thus,
\begin{align*}
\displaystyle
h_x=\frac1M,\qquad h_y=\frac1N\simeq M^{-2}, \quad h_y\ll h_x .
\end{align*}
Each rectangle of the tensor-product grid is subdivided into two right triangles. In the two corner rectangles, the diagonal is flipped to avoid a triangle whose three vertices all lie on the boundary. We measure the anisotropy by
\begin{align*}
\displaystyle
\kappa_h:=\frac{h_x}{h_y}=\frac{N}{M}\simeq M^{\gamma-1}=M .
\end{align*}
Therefore, $\kappa_h\to\infty$ as $M\to\infty$, and the mesh family lies outside the usual shape-regular regime.

For each mesh, we compute the conforming state $\tilde u_h$, the auxiliary CR solution with the cellwise projected load \(r_h=\Pi_h^0 s_h\) used in the flux reconstruction, the reconstructed \(\mathbb{RT}^0\) certificate flux $\sigma_h$, and the corresponding scalar quantities entering the verification inequalities. To make the comparison with the uniform-mesh experiments transparent, the anisotropic results are reported in the same format as Tables~\ref{tab:sec9-cert}--\ref{tab:sec9-qoi-quadratic}, with \(M=N_x\), \(N_y=M^2\), \(\kappa_h=N_y/M\), and \(h_{\max}\) describing the anisotropic mesh. All values are floating-point evaluations of the derived rigorous estimators, as explained in Remark~\ref{rem:sec9-floating-point}.

\begin{figure}[!htbp]
\centering
\includegraphics[width=0.45\textwidth]{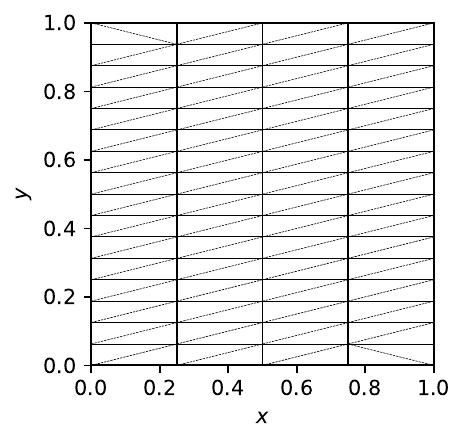}
\caption{Anisotropic right-triangular mesh with \(M=4\) and \(N=M^2=16\).}
\label{fig:anisotropic-mesh}
\end{figure}

\begin{table}[!htbp]
\centering
\begingroup
\footnotesize
\setlength{\tabcolsep}{1.5pt}
\sisetup{
  scientific-notation = true,
  round-mode = figures,
  round-precision = 3
}

\caption{NK certification data on anisotropic right-triangular meshes. The columns have the same meaning as in Table~\ref{tab:sec9-cert}, with \(M=N_x\), \(N_y=M^2\), and \(\kappa_h=N_y/M\) indicating the anisotropic mesh parameters. In all rows, the selected radius satisfies \(\rho/\eta=2\).}
\label{tab:sec9-aniso-cert}
\begingroup
\footnotesize
\setlength{\tabcolsep}{3pt}
\resizebox{\textwidth}{!}{%
\begin{tabular}{r r S S S S S S}
\toprule
{$M$} & {$N_y$} & {$\kappa_h$} & {$h_{\max}$} & {$\eta$} & {$\rho$} & {$q(\rho)$} & {$p(\rho)$} \\
\midrule
4 & 16 & 4.000000e+00 & 2.576941e-01 & 8.6813882654070107e-01 & 1.7362776530814021e+00 & 2.6761954692622603e-01 & -6.3580790711281310e-01 \\
8 & 64 & 8.000000e+00 & 1.259728e-01 & 3.9363674398509108e-01 & 7.8727348797018215e-01 & 9.3511222219732945e-02 & -3.5682729094444909e-01 \\
16 & 256 & 1.600000e+01 & 6.262195e-02 & 1.8727645441347479e-01 & 3.7455290882694958e-01 & 3.8554745996108405e-02 & -1.8005605828251151e-01 \\
32 & 1024 & 3.200000e+01 & 3.126526e-02 & 9.1319626661376496e-02 & 1.8263925332275299e-01 & 1.7432481229515791e-02 & -8.9727698983715659e-02 \\
\bottomrule
\end{tabular}%
}
\endgroup
\endgroup
\end{table}

\begin{table}[!htbp]
\centering
\begingroup
\footnotesize
\setlength{\tabcolsep}{1.5pt}
\sisetup{
  scientific-notation = true,
  round-mode = figures,
  round-precision = 3
}

\caption{Sanity check against the manufactured solution on anisotropic meshes. The columns have the same meaning as in Table~\ref{tab:sec9-sanity}. In particular, ``inside?'' indicates whether \(\|u^*-\tilde u_h\|_V\le \rho\).}
\label{tab:sec9-aniso-sanity}
\begingroup
\footnotesize
\setlength{\tabcolsep}{3pt}
\resizebox{\textwidth}{!}{%
\begin{tabular}{r r S S S S S c}
\toprule
{$M$} & {$N_y$} & {$\kappa_h$} & {$h_{\max}$} & {$\|u^*-\tilde u_h\|_V$} & {$\rho$} & {$\|u^*-\tilde u_h\|_V/\rho$} & {inside?} \\
\midrule
4 & 16 & 4.000000e+00 & 2.576941e-01 & 6.2457116801838131e-01 & 1.7362776530814021e+00 & 3.597185e-01 & yes \\
8 & 64 & 8.000000e+00 & 1.259728e-01 & 3.0966484272332140e-01 & 7.8727348797018215e-01 & 3.933383e-01 & yes \\
16 & 256 & 1.600000e+01 & 6.262195e-02 & 1.5437379770243276e-01 & 3.7455290882694958e-01 & 4.121548e-01 & yes \\
32 & 1024 & 3.200000e+01 & 3.126526e-02 & 7.7126624026315344e-02 & 1.8263925332275299e-01 & 4.222894e-01 & yes \\
\bottomrule
\end{tabular}%
}
\endgroup
\endgroup
\end{table}

\begin{table}[htb]
\centering
\begingroup
\footnotesize
\setlength{\tabcolsep}{1.5pt}
\sisetup{
  scientific-notation = true,
  round-mode = figures,
  round-precision = 3
}
\caption{Computed enclosure for the linear quantity of interest
\(\mathcal J_1(u)=\int_\Omega u\,dx\) on anisotropic meshes. The mesh
parameters are the same as in Table~\ref{tab:sec9-aniso-cert}. We report
the computed value, the true error for the manufactured solution, the
baseline width, the adjoint-enhanced width, and the ratio err/adj.}
\label{tab:sec9-aniso-qoi-linear}
\begingroup
\footnotesize
\setlength{\tabcolsep}{3pt}
\resizebox{\textwidth}{!}{%
\begin{tabular}{r r S S S S S}
\toprule
{$M$} & {$N_y$}
& {$\mathcal J_1(\tilde u_h)$}
& {$|\mathcal J_1(u^*)-\mathcal J_1(\tilde u_h)|$}
& {$W_{\rm base}$}
& {$W_{\rm adj}$}
& {err/adj} \\
\midrule
4 & 16 & 3.7536927015309779e-01 & 2.991546e-02 & 3.907998e-01 & 1.146052e-01 & 2.610307e-01 \\
8 & 64 & 3.9795331465985040e-01 & 7.331420e-03 & 1.771988e-01 & 2.134215e-02 & 3.435184e-01 \\
16 & 256 & 4.0347102969420040e-01 & 1.813705e-03 & 8.430402e-02 & 4.641513e-03 & 3.907573e-01 \\
32 & 1024 & 4.0483266748453356e-01 & 4.520671e-04 & 4.110827e-02 & 1.085326e-03 & 4.165264e-01 \\
\bottomrule
\end{tabular}%
}
\endgroup
\endgroup
\end{table}

\begin{table}[htb]
\centering
\begingroup
\footnotesize
\setlength{\tabcolsep}{1.5pt}
\sisetup{
  scientific-notation = true,
  round-mode = figures,
  round-precision = 3
}
\caption{Computed enclosure for the quadratic quantity of interest
\(\mathcal J_2(u)=\frac12\int_\Omega u^2\,dx\) on anisotropic meshes. The
mesh parameters are the same as in Table~\ref{tab:sec9-aniso-cert}. We
report the computed value, the true error for the manufactured solution,
the baseline width, the adjoint-enhanced width, and the ratio err/adj.}
\label{tab:sec9-aniso-qoi-quadratic}
\begingroup
\footnotesize
\setlength{\tabcolsep}{3pt}
\resizebox{\textwidth}{!}{%
\begin{tabular}{r r S S S S S}
\toprule
{$M$} & {$N_y$}
& {$\mathcal J_2(\tilde u_h)$}
& {$|\mathcal J_2(u^*)-\mathcal J_2(\tilde u_h)|$}
& {$W_{\rm base}$}
& {$W_{\rm adj}$}
& {err/adj} \\
\midrule
4 & 16 & 1.0734925251575975e-01 & 1.765075e-02 & 4.101660e-01 & 1.401003e-01 & 1.259865e-01 \\
8 & 64 & 1.2055472917759819e-01 & 4.445271e-03 & 1.341089e-01 & 2.722998e-02 & 1.632491e-01 \\
16 & 256 & 1.2388934108467579e-01 & 1.110659e-03 & 5.262508e-02 & 5.994010e-03 & 1.852948e-01 \\
32 & 1024 & 1.2472241185204977e-01 & 2.775881e-04 & 2.306614e-02 & 1.405925e-03 & 1.974416e-01 \\
\bottomrule
\end{tabular}%
}
\endgroup
\endgroup
\end{table}

{
For this anisotropic family, \(h_x=M^{-1}\), \(h_y=M^{-2}\), and hence the global mesh parameter is \(h=\max\{h_x,h_y\}=h_x\). For the subsequence \(M=4,8,16,32\), the scaled residual parameter \(\eta\) and the selected radius \(\rho\) decrease steadily when \(M\) is doubled. This is consistent with first-order behavior with respect to the maximum mesh size. As a diagnostic check using the manufactured exact solution, the ratios \(\|u^*-\tilde u_h\|_V/\rho\) remain below one for all tested meshes, ranging approximately from \(0.36\) to \(0.42\). Thus, the manufactured solution remains inside the certified ball. In contrast, the adjoint-enhanced output widths decrease by a factor roughly between four and five under the same refinement, indicating an approximately second-order behavior for the present smooth manufactured solution. These observations are consistent with the conditional statement of Proposition~\ref{prop:anisotropic-mismatch-consistency}: once the underlying CR and conforming approximations are energy-consistent on the chosen anisotropic mesh family, the CR--RT reconstruction does not create an additional obstruction in the flux-mismatch term.
}

This numerical test is not intended as a proof of uniform anisotropic efficiency. Its role is more modest: it provides numerical evidence that the low-order CR--RT certification route behaves coherently on the anisotropic right-triangular meshes considered here.

\FloatBarrier
\section{Conclusion}\label{sec:conclusion}
We have devised a post-processing certification workflow that transforms a standard finite element computation for nonlinear elliptic problems into an a posteriori existence (and local uniqueness) result within a computable ball surrounding the discrete state, along with guaranteed enclosures for selected quantities of interest. The certification is achieved by verifying a small set of scalar conditions derived from three computable components: a guaranteed residual bound (C1), a certified stability constant for the linearization (C2), and a Lipschitz bound for the derivative on the verification ball (C3). A crucial aspect is that (C1) is provided by an explicit Marini-type CR--RT equilibrated-flux reconstruction. This replaces patchwise mixed saddle-point reconstructions by one auxiliary CR problem and an explicit \(\mathbb{RT}^0\) post-processing step. Once the verification ball is certified, output enclosures are derived from computable variation bounds, which can be optionally refined by an adjoint-enhanced correction.

Several extensions are naturally conceivable. The present analysis is confined to scalar semilinear diffusion--reaction problems. Extensions to Stokes-type incompressible flow models would require a separate treatment of the saddle-point structure, the pressure variable, and pressure-robust velocity certification. Here, pressure-robustness means that velocity bounds should not be polluted by pressure errors or by irrotational forcing components. Although $H(\operatorname{div})$-conforming reconstructions are often used in pressure-robust discretizations and a posteriori analysis, we leave the development of such a certified framework to future work. For time-dependent problems, stepwise certification must be complemented by a rigorous propagation of the verified enclosure in time and by goal-oriented space--time error control to obtain guaranteed bounds for dynamic outputs.

Appendix~\ref{app:rho-selection} compiles deferred details for the practical selection of the verification radius.

\appendix
\section{Proofs of some lemmata}

\subsection{Proof of Lemma \ref{lem:remainder}} \label{Appendix=A=1}

\begin{proof}
Let $w\in B_\rho$ and set $\delta:=w-\tilde u_h$. From the fundamental theorem of calculus in Banach spaces,
\begin{align*}
\displaystyle
\mathcal F(\tilde u_h+\delta) - \mathcal F(\tilde u_h) = \int_0^1 \mathcal L_{\tilde u_h + t \delta} \delta dt.
\end{align*}
which leads to 
\begin{align*}
\displaystyle
\mathcal F(w)-\mathcal F(\tilde u_h)-\mathcal L_{\tilde u_h}\delta
=\int_0^1 \left (\mathcal L_{\tilde u_h + t \delta} - \mathcal L_{\tilde u_h} \right) \delta dt.
\end{align*}
Because $\tilde u_h+t\delta\in B_\rho$ for all $t\in[0,1]$, \eqref{eq:C3} implies
\begin{align*}
\displaystyle
\| \mathcal L_{\tilde u_h + t \delta} - \mathcal L_{\tilde u_h} \|_{\mathcal L(V,V^*)}
\leq L(\rho) t \|\delta\|_{V}.
\end{align*}
Therefore, using the triangle inequality for Bochner integrals,
\begin{align*}
\displaystyle
\|\mathcal F(w)- \mathcal F(\tilde u_h)-\mathcal L_{\tilde u_h}\delta\|_{V^*}
&\leq \int_0^1 \| \left ( \mathcal L_{\tilde u_h + t \delta} - \mathcal L_{\tilde u_h} \right) \delta \|_{V^*} dt \\
&\leq \int_0^1 L(\rho) t \| \delta \|_{V}^2 dt = \frac{L(\rho)}{2}\,\|\delta\|_{V}^{2},
\end{align*}
which proves \eqref{eq:remainder}.
\end{proof}

\subsection{Proof of Lemma \ref{lem:wCS}} \label{Appendix=A=2}

\begin{proof}
Fix $u,v\in L^2(\Omega)^d$.
For a.e.\ $x\in\Omega$, we define the pointwise bilinear form as
\begin{align*}
\displaystyle
\langle \xi,\eta\rangle_{A(x)} := \xi\cdot A(x)\eta,
\quad \xi,\eta\in\mathbb R^d.
\end{align*}
Because $A(x)$ is symmetric positive definite, $\langle\cdot,\cdot\rangle_{A(x)}$ is an inner product on $\mathbb R^d$. Hence, the (finite-dimensional) Cauchy--Schwarz inequality yields, for a.e.\ $x\in\Omega$,
\begin{align*} 
\displaystyle
|u(x)\cdot v(x)|
&=
\left | \langle A(x)^{-1} u(x), v(x) \rangle_{A(x)} \right| \notag \\
&\leq
\left ( u(x) \cdot A(x)^{-1}u(x) \right)^{\frac{1}{2}}
\left ( v(x) \cdot A(x) v(x) \right )^{\frac{1}{2}}.
\end{align*}
Integrating the above inequality over $\Omega$ and applying the standard Cauchy--Schwarz inequality in $L^2(\Omega)$ gives
\begin{align*}
\displaystyle
\left |\int_\Omega u\cdot v\,dx\right|
\leq
\int_\Omega
\left ( u\cdot A^{-1}u \right)^{\frac{1}{2}}
\left ( v\cdot A v \right)^{\frac{1}{2}}\,dx
\leq
\left (\int_\Omega u\cdot A^{-1}u dx\right)^{\frac{1}{2}}
\left (\int_\Omega v\cdot A v dx \right)^{\frac{1}{2}},
\end{align*}
which is \eqref{eq:wCS}.
\end{proof}

\subsection{Proof of Lemma \ref{thm:res-majorant}} \label{Appendix=A=3}

\begin{proof}
Fix any $v\in V$. From the residual representation \eqref{eq:res_split},
\begin{align*}
\langle \mathcal F(\tilde u_h),v\rangle
= \int_\Omega\left ( A \nabla \tilde u_h - \sigma_h \right)\cdot\nabla v dx
+\sum_{T\in\mathcal T_h}\int_T \mathcal R_T(\sigma_h) v dx. 
\end{align*}
Let $w:=\sigma_h - A \nabla \tilde u_h \in L^2(\Omega)^d$. Then,
\begin{align*}
\displaystyle
\left |\int_\Omega\left( A \nabla \tilde u_h - \sigma_h \right)\cdot\nabla v dx\right|
= \left |\int_\Omega w\cdot\nabla v dx\right|.
\end{align*}
Using the weighted Cauchy--Schwarz inequality \eqref{eq:wCS} associated with $A$,
\begin{align*}
\displaystyle
\left |\int_\Omega w\cdot\nabla v\,dx \right|
\leq \left (\int_\Omega w\cdot A^{-1}w dx \right)^{\frac{1}{2}}
     \left (\int_\Omega A \nabla v\cdot\nabla v dx\right)^{\frac{1}{2}}
= \eta_{\mathrm{mis}}(\sigma_h) \|v\|_V.
\end{align*}
For each $T \in \mathbb T_h$, we denote the element average  $\bar v_T := |T|_d^{-1}\int_T v dx$. From the equilibration \eqref{eq:mean-equil},
\begin{align*}
\displaystyle
\int_T \mathcal R_T(\sigma_h) v dx
=\int_T \mathcal R_T(\sigma_h) (v-\bar v_T) dx.
\end{align*}
The Cauchy--Schwarz and the Poincar\'e inequalities \eqref{eq:local-poincare} yield
\begin{align*}
\displaystyle
\left| \int_T \mathcal R_T(\sigma_h) v dx \right|
\leq \| \mathcal R_T(\sigma_h)\|_{L^2(T)} \|v-\bar v_T\|_{L^2(T)}
\leq \|\mathcal R_T(\sigma_h)\|_{L^2(T)} \frac{h_T}{\pi}\,\|\nabla v\|_{L^2(T)}.
\end{align*}
Summing over $T$ and applying Cauchy--Schwarz yields
\begin{align*}
\displaystyle
\sum_{T\in\mathcal T_h}\left |\int_T \mathcal R_T(\sigma_h) v dx\right|
\leq \eta_{\mathrm{osc}}(\sigma_h) \|\nabla v\|_{L^2(\Omega)^d}.
\end{align*}
\eqref{eq:A0-coerc} implies
\begin{align*}
\displaystyle
\|v\|_V^2
=\int_\Omega A \nabla v\cdot\nabla v dx
\geq \alpha_0\int_\Omega |\nabla v|^2 dx
=\alpha_0 \|\nabla v\|_{L^2(\Omega)^d}^2,
\end{align*}
which leads to
\begin{align*}
\displaystyle
\|\nabla v\|_{L^2(\Omega)^d}\leq \alpha_0^{-1/2}\|v\|_V.
\end{align*}
Therefore,
\begin{align*}
\displaystyle
\sum_{T\in\mathcal T_h}\left |\int_T \mathcal R_T(\sigma_h) v dx \right|
\leq \alpha_0^{- \frac{1}{2}}\,\eta_{\mathrm{osc}}(\sigma_h) \|v\|_V.
\end{align*}
Combining the above results gives, for all $v\in V$,
\begin{align*}
\displaystyle
|\langle \mathcal F(\tilde u_h),v\rangle|
\leq \left (\eta_{\mathrm{mis}}(\sigma_h)+\alpha_0^{- \frac{1}{2}}\eta_{\mathrm{osc}}(\sigma_h)\right) \|v\|_V.
\end{align*}
Dividing by $\|v\|_V$ and taking the supremum over $v\neq 0$ yields \eqref{eq:dual_res_bound}.
\end{proof}

\section{Deferred material for the radius selection}\label{app:rho-selection}
This appendix provides the technical justification of the certified radius-selection procedure (Algorithm~\ref{alg:rho}).

\subsection{Algorithm and basic properties}\label{app:rho:algo}
We summarize the practical procedure used to select the verification radius $\rho$ in the numerical experiments. Algorithm~\ref{alg:rho} attempts to find an admissible radius satisfying \eqref{eq:NK-conds} by shrinking and then applying a bracketing--bisection search; the admissible set may be empty. For the affine bound $L(\rho)=L_0+L_1\rho$, the well-posedness of this search is justified in Appendix~\ref{app:rho:proofs}.

\begin{definition}[Admissible radius]\label{def:admissible-radius}
For $\rho>0$, we define the admissibility predicate
\begin{align*}
\displaystyle
\mathsf{Adm}(\rho) \Longleftrightarrow  \left (p(\rho) \leq 0 \right) \ \text{and}\ \left (q(\rho)<1 \right),
\end{align*}
and the set of admissible radii
\begin{align*}
\displaystyle
\mathcal{AD} := \{\rho>0:\ \mathsf{Adm}(\rho)\}.
\end{align*}
\end{definition}

\begin{algorithm}[htb]
\caption{Certified selection of the verification radius $\rho$}\label{alg:rho}
\begin{algorithmic}[1]
\State Compute the NK constants (e.g., $\alpha$, $\mathfrak r$, and the functions entering $p(\rho), q(\rho)$).
\State Set $\eta:= \mathfrak r /\alpha$ and initialize $\rho\gets 2\eta$.
\If{$p(\rho) \leq 0$ \textbf{and} $q(\rho)<1$}
  \State \Return $\rho$
\EndIf

\State \Comment{Bracketing by shrinking: find $\rho_{\rm low}$ admissible and $\rho_{\rm up}$ inadmissible}
\State Set $\rho_{\rm up}\gets 2\eta$.
\For{$j=1,\dots,j_{\max}$}
  \State $\rho_{\rm low}\gets \rho_{\rm up}/2$
  \If{$p(\rho_{\rm low})\le 0$ \textbf{and} $q(\rho_{\rm low})<1$}
    \State \textbf{break}
  \Else
    \State $\rho_{\rm up}\gets \rho_{\rm low}$ \Comment{still inadmissible; shrink further}
  \EndIf
\EndFor
\If{$p(\rho_{\rm low})>0$ \textbf{or} $q(\rho_{\rm low})\ge 1$}
  \State \Return \texttt{fail} \Comment{improve $\tilde u_h$/constants and retry}
\EndIf

\State \Comment{Bisection to approximate the maximal admissible radius}
\For{$k=1,\dots,k_{\max}$}
  \State $\rho_{\rm mid}\gets (\rho_{\rm low}+\rho_{\rm up})/2$
  \If{$p(\rho_{\rm mid})\le 0$ \textbf{and} $q(\rho_{\rm mid})<1$}
    \State $\rho_{\rm low}\gets \rho_{\rm mid}$ \Comment{admissible; try larger}
  \Else
    \State $\rho_{\rm up}\gets \rho_{\rm mid}$ \Comment{inadmissible; decrease}
  \EndIf
\EndFor
\State \Return $\rho_{\rm low}$
\end{algorithmic}
\end{algorithm}

\subsection{Monotonicity and convexity for the affine model}\label{app:rho:proofs}
We provide here the proofs of the monotonicity and convexity statements used implicitly in Section~\ref{subsec:rho-selection}. These properties imply that the admissible radii form an interval for $L(\rho)=L_0+L_1\rho$, so that shrinking and bisection cannot miss an admissible radius once a bracket exists. This yields a certified admissible choice of $\rho$ returned by Algorithm~\ref{alg:rho}.

\begingroup\color{black}
\begin{lemma}[Interval structure of admissible radii for the cubic bound]\label{lem:adm-interval}
Let $\alpha>0$ and $\mathfrak r>0$ be given, and set
\begin{align*}
\displaystyle
\eta := \frac{\mathfrak r}{\alpha}.
\end{align*}
Assume that the Lipschitz bound has the affine form
\begin{align*}
\displaystyle
L(\rho)=L_0+L_1\rho, \quad  L_0 \geq 0,\quad L_1 >0,
\end{align*}
which includes the standard choice $L(\rho)=6C_4^4(\|\tilde u_h\|_V+\rho)$.
For $\rho>0$, define the admissibility predicate by
\begin{align*}
\displaystyle
\mathsf{Adm}(\rho)
\Longleftrightarrow
\left(p(\rho)\leq 0\right)\ \text{and}\ \left(q(\rho)<1\right),
\end{align*}
where
\begin{align*}
\displaystyle
p(\rho) = \eta+\frac{1}{2}\alpha^{-1}L(\rho)\rho^2-\rho,
\qquad
q(\rho)=\alpha^{-1}L(\rho)\rho.
\end{align*}
Let
\begin{align*}
\displaystyle
\mathcal{AD}:=\{\rho>0:\mathsf{Adm}(\rho)\}.
\end{align*}
Then $\mathcal{AD}$ is either empty, a singleton, or an interval with positive left endpoint.
More precisely, in the non-degenerate case where $\mathcal{AD}$ contains more than one point, there exist
$0<\rho_-<\rho_+<\infty$ such that
\begin{align*}
\displaystyle
\mathcal{AD}=[\rho_-,\rho_+]
\quad\text{or}\quad
\mathcal{AD}=[\rho_-,\rho_+).
\end{align*}
In particular, suppose that $0<\rho_{\rm low}<\rho_{\rm up}$,
that $\mathsf{Adm}(\rho_{\rm low})$ is true, and that
$\mathsf{Adm}(\rho_{\rm up})$ is false. If
$\rho_{\rm low}<\sup\mathcal{AD}$, then
\begin{align*}
\displaystyle
\rho_*:=\sup\mathcal{AD}\in(\rho_{\rm low},\rho_{\rm up}],
\end{align*}
and admissibility holds for all $\rho\in[\rho_{\rm low},\rho_*)$ and fails for all
$\rho\in(\rho_*,\rho_{\rm up}]$.
The endpoint $\rho_*$ itself may or may not be admissible, depending on whether the active constraint is
$p(\rho)\leq 0$ or $q(\rho)<1$. Consequently, bisection on
$[\rho_{\rm low},\rho_{\rm up}]$, with the lower endpoint updated only by admissible midpoints, is well-defined for producing admissible lower bounds of $\rho_*$.
\end{lemma}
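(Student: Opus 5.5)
The plan is to treat the two constraints separately and intersect the resulting sets. With $L(\rho)=L_0+L_1\rho$, both constraints become polynomial in $\rho$:
\begin{align*}
q(\rho)=\alpha^{-1}(L_0\rho+L_1\rho^2), \qquad
p(\rho)=\eta-\rho+\tfrac{1}{2}\alpha^{-1}(L_0\rho^2+L_1\rho^3).
\end{align*}

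\emph{The constraint $q(\rho)<1$.} Since $L_0\ge 0$ and $L_1>0$, the function $q$ is strictly increasing on $(0,\infty)$, continuous, satisfies $q(0^+)=0$, and tends to $\infty$. Hence $\{\rho>0: q(\rho)<1\}=(0,\rho_q)$, where $\rho_q>0$ is the unique root of $q(\rho)=1$.

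\emph{The constraint $p(\rho)\le 0$.} Here $p''(\rho)=\alpha^{-1}(L_0+3L_1\rho)>0$ for $\rho>0$, so $p$ is strictly convex on $(0,\infty)$. Sublevel sets of a continuous strictly convex function are intervals, so $\{\rho>0:p(\rho)\le 0\}$ is a closed interval $[a,b]$, possibly empty or degenerate. Boundedness and positivity of its endpoints come from $p(0^+)=\eta>0$ and $p(\rho)\to\infty$ as $\rho\to\infty$. Thus $0<a\le b<\infty$, since $p$ is positive near $0$ and near infinity.

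\emph{Intersection.} We get $\mathcal{AD}=[a,b]\cap(0,\rho_q)$. This is either empty, or equal to $[a,\min(b,\rho_q))$ when $\rho_q\le b$, or equal to $[a,b]$ when $b<\rho_q$. A nonempty $[a,b]$ with $a=b$ gives a singleton. Otherwise, setting $\rho_-=a$ and $\rho_+=\min(b,\rho_q)$, the set has more than one point exactly when $\rho_-<\rho_+$, and $\rho_-\ge a>0$. This proves the trichotomy.

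\emph{Bracketing statement.} Since $\mathsf{Adm}(\rho_{\rm low})$ holds, $\rho_{\rm low}\in\mathcal{AD}$, so $\rho_{\rm low}\ge\rho_-$. The hypothesis $\rho_{\rm low}<\sup\mathcal{AD}=\rho_*$ gives $\rho_*>\rho_{\rm low}$. Suppose $\rho_*>\rho_{\rm up}$. Then $\rho_{\rm up}\in[\rho_{\rm low},\rho_*)\subset\mathcal{AD}$ by the interval structure, contradicting $\neg\mathsf{Adm}(\rho_{\rm up})$; hence $\rho_*\le\rho_{\rm up}$. Admissibility on $[\rho_{\rm low},\rho_*)$ follows because $\mathcal{AD}$ is an interval containing $\rho_{\rm low}$ with supremum $\rho_*$. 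Failure on $(\rho_*,\rho_{\rm up}]$ holds by the definition of the supremum. The endpoint $\rho_*$ belongs to $\mathcal{AD}$ exactly when $\rho_*=b<\rho_q$, i.e.\ when the $p$-constraint is active; if $\rho_*=\rho_q$, the strict inequality excludes it.

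\emph{Bisection.} We maintain the invariant $\mathsf{Adm}(\rho_{\rm low})$, $\neg\mathsf{Adm}(\rho_{\rm up})$. By the step above, every midpoint is classified correctly relative to $\rho_*$: midpoints in $[\rho_{\rm low},\rho_*)$ are admissible and those in $(\rho_*,\rho_{\rm up}]$ are not. So the invariant is preserved, $\rho_{\rm low}$ remains admissible, and $\rho_{\rm low}\uparrow\rho_*$.

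\emph{Main obstacle.} The only delicate point is the positivity and finiteness of the endpoints of the sublevel set of $p$, together with the open-versus-closed bookkeeping at $\rho_q$. Both are handled by $\eta>0$ (because $\mathfrak r>0$) and by the growth of $p$, which is driven by $L_1>0$.
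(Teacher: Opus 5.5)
Your proof is correct and follows the same route as the paper: split into the strictly increasing constraint $q<1$, which gives $(0,\rho_q)$, and the strictly convex constraint $p\le 0$, which gives a compact interval bounded away from $0$ because $p(0)=\eta>0$; then intersect, and read off the bracketing and bisection claims from the resulting interval structure. Your explicit identification $\rho_+=\min(b,\rho_q)$ and your contradiction argument for $\rho_*\le\rho_{\rm up}$ spell out steps that the paper states more tersely.
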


\begin{proof}
First, consider the contraction indicator. Since
\begin{align*}
\displaystyle
q(\rho)=\alpha^{-1}L(\rho)\rho
=\alpha^{-1}(L_0\rho+L_1\rho^2),
\end{align*}
we have
\begin{align*}
\displaystyle
q'(\rho)=\alpha^{-1}(L_0+2L_1\rho)>0,\quad \rho>0.
\end{align*}
Thus $q$ is strictly increasing on $(0,\infty)$. Moreover, since $L_1>0$, one has $q(\rho)\to\infty$ as $\rho\to\infty$. Therefore,
\begin{align*}
\displaystyle
\{\rho>0:q(\rho)<1\}=(0,\rho_q)
\end{align*}
for a unique $\rho_q\in(0,\infty)$.

Next, expand $p$ as
\begin{align*}
\displaystyle
p(\rho)=\eta+\frac{1}{2}\alpha^{-1}(L_0\rho^2+L_1\rho^3)-\rho.
\end{align*}
Then
\begin{align*}
\displaystyle
p''(\rho)=\alpha^{-1}(L_0+3L_1\rho)>0,\quad \rho>0.
\end{align*}
Hence $p$ is strictly convex on $(0,\infty)$. Extending $p$ continuously to $\rho=0$, we have
$p(0)=\eta>0$. Also, $p(\rho)\to\infty$ as $\rho\to\infty$.
Therefore, the sublevel set
\begin{align*}
\displaystyle
\{\rho>0:p(\rho)\leq 0\}
\end{align*}
is either empty, a singleton, or a compact interval
$[\rho_p^-,\rho_p^+]$ with $0<\rho_p^-\leq \rho_p^+<\infty$.

By definition,
\begin{align*}
\displaystyle
\mathcal{AD}
=\{\rho>0:p(\rho)\leq 0\}
\cap
\{\rho>0:q(\rho)<1\}.
\end{align*}
Intersecting the preceding compact interval with $(0,\rho_q)$ shows that
$\mathcal{AD}$ is either empty, a singleton, or an interval with positive left endpoint. In the non-degenerate case, it has the form
\begin{align*}
\displaystyle
\mathcal{AD}=[\rho_-,\rho_+]
\quad\text{or}\quad
\mathcal{AD}=[\rho_-,\rho_+),
\end{align*}
depending on whether the right endpoint is determined by the closed condition $p(\rho)\leq 0$ or by the open condition $q(\rho)<1$.

Now suppose that $0<\rho_{\rm low}<\rho_{\rm up}$,
that $\mathsf{Adm}(\rho_{\rm low})$ is true, and that
$\mathsf{Adm}(\rho_{\rm up})$ is false. If $\rho_{\rm low}<\sup\mathcal{AD}$, then the interval structure implies
\begin{align*}
\displaystyle
\rho_*:=\sup\mathcal{AD}\in(\rho_{\rm low},\rho_{\rm up}].
\end{align*}
Furthermore, admissibility holds for all $\rho\in[\rho_{\rm low},\rho_*)$ and fails for all
$\rho\in(\rho_*,\rho_{\rm up}]$. The endpoint $\rho_*$ may be admissible if the upper endpoint is determined by $p(\rho)\leq 0$, and may be inadmissible if it is determined by $q(\rho)<1$. This distinction is irrelevant for the bisection procedure, because the algorithm always keeps the lower endpoint admissible and moves it only to admissible midpoints. Therefore, bisection is well-defined for producing admissible lower bounds of $\rho_*$.
\end{proof}

\begin{proposition}[Correctness of Algorithm~\ref{alg:rho}]\label{prop:alg-rho-correct}
Assume that $\mathcal{AD}\neq\emptyset$ and that the non-degenerate case of Lemma~\ref{lem:adm-interval} holds. Let
$\rho_+ := \sup\mathcal{AD}$. Suppose Algorithm~\ref{alg:rho} enters the bisection phase with a bracket
$0<\rho_{\rm low}<\rho_{\rm up}$ such that $\mathsf{Adm}(\rho_{\rm low})$ is true,
$\mathsf{Adm}(\rho_{\rm up})$ is false, and $\rho_{\rm low}<\rho_+$. Then, the bisection loop is well-defined, preserves the bracket property
\begin{align*}
\displaystyle
\mathsf{Adm}(\rho_{\rm low})\ \text{true},\qquad
\mathsf{Adm}(\rho_{\rm up})\ \text{false},
\end{align*}
and returns an admissible radius $\rho_{\rm low}\in\mathcal{AD}$ satisfying
\begin{align*}
\displaystyle
\rho_{\rm low}\leq \rho_+ \leq \rho_{\rm up}
\quad\text{and}\quad
\rho_{\rm up}-\rho_{\rm low}\leq 2^{-k_{\max}}\bigl(\rho_{\rm up}^{(0)}-\rho_{\rm low}^{(0)}\bigr),
\end{align*}
where $(\rho_{\rm low}^{(0)},\rho_{\rm up}^{(0)})$ denotes the bracket at the start of bisection.
In particular, the returned $\rho_{\rm low}$ is a certified admissible lower bound of the threshold $\rho_+$. Furthermore, if the shrinking phase returns \texttt{fail}, then
\begin{align*}
\displaystyle
\mathsf{Adm}\!\left(2\eta/2^j\right)\ \text{is false for every } j=0,1,\dots,j_{\max}.
\end{align*}
\end{proposition}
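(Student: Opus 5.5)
The proof will be a loop-invariant argument for the bisection phase, built on the interval structure from Lemma~\ref{lem:adm-interval}, followed by a direct reading of the shrinking phase for the \texttt{fail} statement. By assumption we are in the non-degenerate case, so Lemma~\ref{lem:adm-interval} gives $\mathcal{AD}=[\rho_-,\rho_+]$ or $[\rho_-,\rho_+)$ with $0<\rho_-<\rho_+<\infty$. Since the initial bracket satisfies $\mathsf{Adm}(\rho_{\rm low})$, $\neg\mathsf{Adm}(\rho_{\rm up})$ and $\rho_{\rm low}<\rho_+$, the final part of Lemma~\ref{lem:adm-interval} yields two facts: $\rho_+\in(\rho_{\rm low},\rho_{\rm up}]$, and admissibility holds exactly on $[\rho_{\rm low},\rho_+)$ (possibly also at $\rho_+$) and fails on $(\rho_+,\rho_{\rm up}]$.

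The invariant $I_k$ will consist of four parts:
\begin{itemize}
\item $\mathsf{Adm}(\rho_{\rm low})$ holds;
\item $\mathsf{Adm}(\rho_{\rm up})$ fails;
\item $\rho_{\rm low}\le\rho_+\le\rho_{\rm up}$;
\item $\rho_{\rm up}-\rho_{\rm low}=2^{-k}(\rho^{(0)}_{\rm up}-\rho^{(0)}_{\rm low})$.
\end{itemize}
At $k=0$ this holds by hypothesis and the fact just recalled. For the induction step, $\rho_{\rm mid}$ is a finite positive number strictly between the endpoints, so $p$ and $q$ can be evaluated there and the step is well-defined. If $\mathsf{Adm}(\rho_{\rm mid})$ holds, then $\rho_{\rm mid}\in\mathcal{AD}$, so $\rho_{\rm mid}\le\sup\mathcal{AD}=\rho_+$, and the new lower endpoint keeps all four properties. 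If $\mathsf{Adm}(\rho_{\rm mid})$ fails, then $\rho_{\rm mid}\ge\rho_+$ is needed; otherwise $\rho_{\rm mid}\in[\rho_{\rm low},\rho_+)$ would be admissible by the interval structure. So the new upper endpoint again keeps the invariant. In both cases the width halves. After $k_{\max}$ steps the returned $\rho_{\rm low}$ lies in $\mathcal{AD}$ and satisfies the stated sandwich and width bound.

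For the \texttt{fail} statement, I will trace the loop. The initial test rejects $\rho=2\eta$, so $\mathsf{Adm}(2\eta)$ is false; this is the case $j=0$. The shrinking loop tests $\rho_{\rm low}=2\eta/2^j$ at iteration $j$. It breaks exactly when that value is admissible, and otherwise replaces $\rho_{\rm up}$ by it. Returning \texttt{fail} means the final $\rho_{\rm low}$ is inadmissible, so no break occurred. Hence every tested value $2\eta/2^j$, $j=1,\dots,j_{\max}$, was inadmissible.

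I expect the delicate point to be the negative branch of the induction step. That is where the interval structure is genuinely needed, to exclude an inadmissible midpoint lying below $\rho_+$. The argument must also handle the possibly open right endpoint: when $\rho_+$ is itself inadmissible, we only need $\rho_{\rm mid}\ge\rho_+$, which is consistent with $\rho_{\rm up}\ge\rho_+$ rather than a strict inequality.
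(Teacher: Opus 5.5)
Your proposal is correct and follows essentially the same route as the paper: the interval structure from Lemma~\ref{lem:adm-interval} gives $\rho_{\rm low}<\rho_+\le\rho_{\rm up}$, the two-case midpoint analysis then preserves the bracket and the sandwich while halving the width, and the \texttt{fail} claim is read directly off the shrinking loop. Two details of yours are a little more careful than the paper's wording: you carry the width explicitly in the invariant, and you note that the $j=0$ case comes from the initial test at $2\eta$ rather than from the loop. Neither changes the argument.
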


\begin{proof}
By Lemma~\ref{lem:adm-interval} and the present assumptions, $\mathcal{AD}$ is an interval with upper endpoint
$\rho_+=\sup\mathcal{AD}$, which may or may not belong to $\mathcal{AD}$. From $\mathsf{Adm}(\rho_{\rm low})$ true and the assumption $\rho_{\rm low}<\rho_+$, we have
\begin{align}\label{eq:inv0a}
\rho_{\rm low}<\rho_+.
\end{align}
Since $\mathsf{Adm}(\rho_{\rm up})$ is false, while $\rho_{\rm up}>\rho_{\rm low}$, the interval structure gives
\begin{align}\label{eq:inv0b}
\rho_{\rm up}\geq \rho_+.
\end{align}
Combining \eqref{eq:inv0a}--\eqref{eq:inv0b}, we obtain
\begin{align}\label{eq:inv0}
\rho_{\rm low}<\rho_+\leq\rho_{\rm up}.
\end{align}

Let $\displaystyle \rho_{\rm mid}:=\frac{\rho_{\rm low}+\rho_{\rm up}}{2}$. There are two cases:
\begin{description}
  \item[(Case 1)] $\mathsf{Adm}(\rho_{\rm mid})$ is true. Then $\rho_{\rm mid}\in\mathcal{AD}$, and hence $\rho_{\rm mid}\leq \rho_+$. Updating $\rho_{\rm low}\gets\rho_{\rm mid}$ keeps the lower endpoint admissible and preserves $\rho_{\rm low}\leq\rho_+\leq\rho_{\rm up}$.

  \item[(Case 2)] $\mathsf{Adm}(\rho_{\rm mid})$ is false. Since $\rho_{\rm mid}\geq\rho_{\rm low}$ and admissibility holds on the interval below the upper threshold, the false midpoint must satisfy $\rho_{\rm mid}\geq\rho_+$. Updating $\rho_{\rm up}\gets\rho_{\rm mid}$ keeps the upper endpoint inadmissible and preserves $\rho_{\rm low}<\rho_+\leq\rho_{\rm up}$.
\end{description}

Thus, the bracket property and the invariant
$\rho_{\rm low}\leq \rho_+\leq \rho_{\rm up}$ are preserved at every iteration, and the returned $\rho_{\rm low}$ is admissible. The interval length halves each time, so after $k_{\max}$ steps,
\begin{align*}
\displaystyle
\rho_{\rm up}-\rho_{\rm low}\leq 2^{-k_{\max}}\left(\rho_{\rm up}^{(0)}-\rho_{\rm low}^{(0)}\right),
\end{align*}
and the invariant implies $\rho_{\rm low}\leq \rho_+\leq\rho_{\rm up}$.

In the shrinking loop, the algorithm tests the dyadic candidates $2\eta,\,\eta,\,\eta/2,\,\dots,\,2\eta/2^{j_{\max}}$. If it returns \texttt{fail}, then none of these candidates was admissible, i.e., $\mathsf{Adm}(2\eta/2^j)$ is false for all $j=0,1,\dots,j_{\max}$.
\end{proof}
\endgroup

\begin{remark}[Sanity check against the manufactured solution]
Because $u^*$ is known here, we can compare the certified radius with the true error:
\begin{align*}
\displaystyle
\|u^*-\tilde u_h\|_V \leq \rho \quad \text{(expected to hold if $\tilde u_h$ is close to $u^*$)},
\end{align*}
see Table \ref{tab:sec9-sanity}. We include this comparison only as a diagnostic; certification of existence does not rely on knowing $u^*$.
\end{remark}


\section*{CRediT authorship contribution statement}
Hiroki Ishizaka: Conceptualization of this study, Methodology, Programming.

\end{document}